\documentclass[11pt,times,twoside]{article}
%%%%%%%%%%%%%%%%%%%%%%%%%%%%%%%%%%%%%%%%%%%
%\usepackage[leqno]{amsmath} %%%%·ûºÅ(1.1)ÔÚÇ°
\usepackage{graphicx,latexsym,euscript,makeidx,color,bm}
\usepackage{amsmath,amsfonts,amssymb,amsthm,thmtools,mathtools,mathrsfs,enumerate}
\usepackage[colorlinks,linkcolor=blue,anchorcolor=green,citecolor=red]{hyperref}
%\usepackage[colorlinks,linkcolor=black,anchorcolor=black,citecolor=black]{hyperref}
%\usepackage{refcheck}
%%%%%%%%%%%%%%%%%%%%%%%%%%%%%%%%%%%%%%%%%%%%%%%%%%%%%%%%%%%%%%%%%%%%%%%%%%%%%%%%

%%%%%%%%%%%%%%%%%%%%%%%%%%%%%%%%%%%%%%%%%%%%%%%%%%%%%%%%%%%%%%%%%%%%%%%%%%%%%%%%

\usepackage{geometry}
\geometry{left=2.5cm,right=2.5cm,top=3cm,bottom=3cm}
%\geometry{left=2.5cm,right=2.5cm,top=2.7cm,bottom=2.6cm}
%%%%%%%%%%%%%%%%%%%%%%%%%%%%%%%%%%%%%%%%%%%%%%%%%%%%%%%%%%%%%%%%%%%%%%%%%%%%%%%%
%\oddsidemargin  = 0pt
%\evensidemargin = 0pt
%\marginparwidth = 1in
%\marginparsep   = 0pt
%\leftmargin     = 1.25in
%\topmargin =0pt
%\headheight     = 0pt
%\headsep        = 1.5em
%\topskip =0pt
%\footheight     = 0.25in
%\footskip       =0.35in
%\textheight   = 9in
%\textwidth =6.5in
%%%%%%%%%%%%%%%%%%%%%%%%%%%%%%%%%%%%%%%%%%%%%%%%%%%%%%%%%%%%%%%%%%%%%%%%%%%%%%%%
\def\5n{\negthinspace \negthinspace \negthinspace \negthinspace \negthinspace }
\def\4n{\negthinspace \negthinspace \negthinspace \negthinspace }
\def\3n{\negthinspace \negthinspace \negthinspace }
\def\2n{\negthinspace \negthinspace }
\def\1n{\negthinspace }
%%%%%%%%%%%%%%%%%%%%%%%%%%%%%%%%%%%%%%%%%%%%%%%%%%%%%%%%%%%%%%%%%%%%%%%%%%%%%%%%%%%%%%%%%%%%%%%%%%%%%%%%%%%%%%%%%%%%
 \def\sA{\mathscr{A}}    
 \def\sB{\mathscr{B}}

\def\dbE{\mathbb{E}} \def\sE{\mathscr{E}}    
\def\dbF{\mathbb{F}} \def\sF{\mathscr{F}}    
 \def\sG{\mathscr{G}}        
\def\dbH{\mathbb{H}}   \def\cH{{\cal H}}

   \def\cL{{\cal L}}  
     
\def\dbN{\mathbb{N}}     
     
\def\dbP{\mathbb{P}}   \def\cP{{\cal P}}  
\def\dbQ{\mathbb{Q}}     
\def\dbR{\mathbb{R}}     
     
 \def\sT{\mathscr{T}}

   \def\cW{{\cal W}}     \def\wW{\widetilde{W}}
     
         \def\by{\bar{y}}
   \def\cZ{{\cal Z}}      \def\bz{\bar{z}}
%%%%%%%%%%%%%%%%%%%%%%%%%%%%%%%%%%%%%%%%%%%%%%%%%%%%%%%%%%%%%%%%%%%%%%%%%%%%%%%%%%%%%%%%%%%%
\def\Om{\Omega}           \def\Th{\Theta} \def\sgn{\mathop{\rm sgn}}

%%%%%%%%%%%%%%%%%%%%%%%%%%%%%%%%%%%%%%%%%%%%%%%%%%%%%%%%%%%%%%%%%%%%%%%%%%%%%%%%%%%%%%%%%%%%%%%%%%%%%%%%%%%%%%%%%%%%

\def\ss{\smallskip}                
\def\ms{\medskip}                
               
\def\ds{\displaystyle}           
\def\ra{\rightarrow}      
 
\def\no{\noindent}        \def\q{\quad}                      
\def\ns{\noalign{\ss}}    \def\qq{\qquad}                    
    \def\hb{\hbox}                     
                   
         \def\rf{\eqref}                    
  \def\deq{\triangleq}               
            \def\({\Big (}
\def\les{\leqslant}                  \def\){\Big )}
\def\leq{\leqslant}       \def\geq{\geqslant}
\def\ges{\geqslant}       \def\esssup{\mathop{\rm esssup}}   \def\[{\Big[}
           \def\]{\Big]}
                   \def\cd{\cdot}
\def\wt{\widetilde}              \def\cds{\cdots}
                            
\def\nn{\nonumber}        \def\ts{\times}
%%%%%%%%%%%%%%%%%%%%%%%%%%%%%%%%%%%%%%%%%%%%%%%%%%%%%%%%%%%%%%%%%%%%%%%%%%%%%%%%%%%%%%%%%%%%

\def\a{\alpha}        \def\G{\Gamma}   \def\g{\gamma}   \def\O{\Omega}   \def\o{\omega}
\def\b{\beta}         \def\D{\Delta}   \def\d{\delta}        
         \def\Th{\Theta}  \def\th{\theta}    
\def\e{\varepsilon}   \def\L{\Lambda}  \def\l{\lambda}  \def\m{\mu}      
    \def\t{\tau}     \def\f{\varphi}  \def\i{\infty}   

%%%%%%%%%%%%%%%%%%%%%%%%%%%%%%%%%%%%%%%%%%%%%%%%%%%%%%%%%%%%%%%%%%%%%%%%%%%%%%%%%%%%%%%%%%%%
\def\bde{\begin{definition}\label}    \def\ede{\end{definition}}
\def\be{\begin{equation}}
\def\bel{\begin{equation}\label}      \def\ee{\end{equation}}
\def\bt{\begin{theorem}\label}        \def\et{\end{theorem}}
\def\bc{\begin{corollary}\label}      \def\ec{\end{corollary}}
\def\bl{\begin{lemma}\label}          \def\el{\end{lemma}}
\def\bp{\begin{proposition}\label}    \def\ep{\end{proposition}}
\def\bas{\begin{assumption}\label}    \def\eas{\end{assumption}}
\def\br{\begin{remark}\label}         \def\er{\end{remark}}
\def\bex{\begin{example}\label}       \def\ex{\end{example}}
\def\ba{\begin{array}}                \def\ea{\end{array}}
\def\ben{\begin{enumerate}}           \def\een{\end{enumerate}}

%%%%%%%%%%%%%%%%%%%%%%%%%%%%%%%%%%%%%%%%%%%%%%%%%%%%%%%%%%%%%%%%%%%%%%%%%%%%%%%%%%%%%%%%%%%%
\newtheorem{theorem}{Theorem}[section]
\newtheorem{definition}[theorem]{Definition}
\newtheorem{proposition}[theorem]{Proposition}
\newtheorem{corollary}[theorem]{Corollary}
\newtheorem{lemma}[theorem]{Lemma}
\newtheorem{remark}[theorem]{Remark}
\newtheorem{example}[theorem]{Example}

\newtheorem{assumption}{Assumption}
%%%%%%%%%%%%%%%%%%%%%%%%%%%%%%%%%%%%%%%%%%%%%%%%%%%%%%%%%%%%%%%%%%%%%%%%%%%%%%%%%%%%%%%%%%%%

\makeatletter
   
   \@addtoreset{equation}{section}
\makeatother

%%%%%%%%%%%%%%%%%%%%%%%%%%%%%%%%%%%%%%%%%%%%%%%%%%%%%%%%%%%%%%%%%%%%%%%%
\sloppy
\allowdisplaybreaks[4]
%%%%%%%%%%%%%%%%%%%%%%%%%%%%%%%%%%%%%%%%%%%%%%%%%%%%%%%%%%%%%%%%%%%%%%%%

\begin{document}

\title{\bf Mean-field backward stochastic differential equations and nonlocal PDEs with quadratic growth}

\author{Tao Hao\thanks{School of Statistics and Mathematics, Shandong University of Finance and Economics, Jinan 250014, China (Email: {\tt taohao@sdufe.edu.cn}).
TH is partially supported by Natural Science Foundation of Shandong Province (Grant No. ZR2020MA032),
National Natural Science Foundation of China (Grant Nos. 11871037, 72171133).}~,~~
Ying Hu\thanks{Univ. Rennes, CNRS, IRMAR - UMR 6625, F-35000, Rennes, France (Email: {\tt ying.hu@univ-rennes1.fr}). YH is partially supported by Lebesgue Center of Mathematics ``Investissements d'avenir'' program-ANR-11-LABX-0020-01, by CAESARS-ANR-15-CE05-0024 and by MFG-ANR16-CE40-0015-01.}~,~~
Shanjian Tang\thanks{Department of Finance and Control Sciences, School of Mathematical Sciences, Fudan University, Shanghai 200433, China (Email: {\tt sjtang@fudan.edu.cn}). ST is partially supported by National Science Foundation of China (Grant No. 12031009) and
National Key R/D Program of China (Grant No. 2018YFA0703900).
}~,~~
Jiaqiang Wen\thanks{Corresponding Author. Department of Mathematics, Southern University of Science and Technology, Shenzhen 518055, China (Email: {\tt wenjq@sustech.edu.cn}). JW is partially supported by National Natural Science Foundation of China (grant No. 12101291) and Guangdong Basic and Applied Basic Research Foundation (grant No. 2022A1515012017), and Shenzhen Fundamental Research General Program (Grant No. JCYJ20230807093309021).}}
%
%%\date{}
\maketitle

\no\bf Abstract: \rm
In this paper, we study general mean-field backward stochastic differential equations (BSDEs, for short) with quadratic growth. First, the existence and uniqueness of local and global solutions are proved with some new ideas for a one-dimensional mean-field BSDE when the generator $g\big(t, Y, Z, \mathbb{P}_{Y}, \mathbb{P}_{Z}\big)$ has a quadratic growth in $Z$ and the terminal value is bounded. Second, a comparison theorem for the general mean-field BSDEs is obtained with the Girsanov transform. Third, we prove the convergence of the particle systems to the mean-field BSDEs with quadratic growth, and the convergence rate is also given. Finally, in this framework, we use the mean-field BSDE to provide a probabilistic representation for the viscosity solution of a nonlocal partial differential equation (PDE, for short) as an extended nonlinear Feynman-Kac formula, which yields the existence and uniqueness of the solution to the PDE.

\ms

\no\bf Key words: \rm Backward stochastic differential equation, mean-field, quadratic growth, partial differential equation, McKean-Vlasov equation.

\ms

\no\bf AMS subject classifications. \rm 60H10, 60H30

\section{Introduction}

Mean-field stochastic differential equations (SDEs, for short), also called McKean-Vlasov equations, can be traced back to the work of Kac \cite{Kac-1956} in the 1950s.
Recently, inspired by particle systems, the mean-field backward stochastic differential equations (BSDEs, for short) were introduced by  Buckdahn, Djehiche, Li, and Peng \cite{Buckdahn-D-Li-Peng-09}  and Buckdahn, Li, and Peng \cite{BLP}. Since then, mean-field BSDEs and the related nonlocal partial differential equations (PDEs, for short) have received an intensive attention.
However, up to now, most works on mean-field BSDEs assume  the linear growth of the generators, which essentially  hinders the theory's further development and application. This paper will comprehensively study mean-field BSDEs with quadratic growth, including the existence and uniqueness result, the comparison theorem, the particle systems, and their applications to PDEs.
To be more precise, we describe the problem in detail.

Let $(\Om,\sF,\dbF,\dbP)$ be a complete filtered probability space on which a $d$-dimensional standard Brownian motion $\{W_t\;;0\les t<\i\}$ is defined, where $\dbF=\{\sF_t\}_{t\ges0}$ is the natural filtration of $W$ augmented by all the $\dbP$-null sets in $\sF$. For a finite time $T>0$,  consider the following general mean-field BSDE:
\bel{MFBSDE}
Y_t=\eta+\int_t^Tg\big(s,Y_s,Z_s, \dbP_{Y_s},\dbP_{Z_s}\big)ds-\int_t^TZ_sdW_s,\q t\in[0,T],
\ee
where the random variable $\eta$ is called the {\it terminal value} and the coefficient $g$ is called the {\it generator}.
The unknown processes, called an adapted solution of \rf{MFBSDE}, are the pair $(Y,Z)$ of $\dbF$-adapted processes, with
$\mathbb{P}_{Y_s}$ and $\mathbb{P}_{Z_s}$ being the laws (or  distributions) of $Y_s$ and $Z_s$, respectively.
Here, by the word ``general'', we mean that the generator $g$ depends on the marginal distributions of the unknown processes rather than merely their expectations.
In what follows,  BSDE~\rf{MFBSDE}  is called a {\it quadratic} mean-field BSDE or a mean-field BSDE {\it with quadratic growth} if  the generator $g$ has a  quadratic growth  in the  second unknown argument $Z$, and  the terminal value $\eta$ is called {\it bounded} if it is essentially bounded.

When the  marginal laws of both unknown processes $Y$ and $Z$ appear merely via their expectations, the mean-field BSDE \rf{MFBSDE} was studied by Buckdahn, Djehiche, Li, and Peng \cite{Buckdahn-D-Li-Peng-09}  and Buckdahn, Li, and Peng \cite{BLP}, where the existence, uniqueness, a comparison theorem, and the relation with a nonlocal PDE are given when the coefficients are  uniformly Lipschitz continuous.
Recently, the derivative of a functional $\varphi: \mathcal{P}_{2}(\mathbb{R}^{d}) \rightarrow \mathbb{R}$ with respect to the measure argument is introduced by Lions \cite{Lions-13} at {\it Coll\`{e}ge de France}. Since then, this definition has been  adopted in many works.
%
%\tc{In the following, for the sake of distinction, we call \rf{MFBSDE} the general mean-field BSDEs, and call it the classical mean-field BSDEs if the joint distribution $\mathbb{P}_{(Y,Z)}$ behaves as the form of the expectation of $(Y,Z)$.}
%
For instance, Chassagneux, Crisan, and Delarue \cite{CC-Delarue-15} (see also Carmona and Delarue \cite{Carmona-Delarue-18}) studied the general mean-field BSDE \rf{MFBSDE} coupled with a McKean-Vlasov forward equation, and proved that this class of equations admit  unique adapted solutions under uniformly Lipschitz continuous coefficients.  Buckdahn, Li, Peng, and Rainer \cite{Buckdahn-Li-Peng-Rainer-17} studied the general forward mean-field stochastic differential equations and the associated PDEs. Li \cite{Li-18} studied the general mean-field forward-backward SDEs with jumps and associated nonlocal quasi-linear integral-PDEs.
%
%
%Li--Liang--Zhang \cite{Li-Liang-Zhang-18} studied the general mean-field BSDEs \rf{MFBSDE} with continuous coefficients.
%
%
Besides, for the applications of the mean-field framework in stochastic control problems,
Yong \cite{Yong-13} studied a linear-quadratic optimal control problem of mean-field SDEs,
and Buckdahn, Chen, and Li \cite{Buckdahn-Chen-Li-21} studied the partial derivative with respect to the measure and its application to general controlled mean-field systems.

On the one hand, when the generator $g$ is independent of  $(\mathbb{P}_{Y},\mathbb{P}_{Z})$, the general mean-field BSDE \rf{MFBSDE} is reduced to the following  BSDE:
\begin{align}\label{BSDE}
	Y_t=\eta+\int_t^Tg(s,Y_s,Z_s)ds - \int_t^TZ_{s}dW_{s},\q~0\les t\les T,
\end{align}
which were introduced by Pardoux and Peng \cite{Pardoux-Peng-90}, where the existence and uniqueness were obtained for the case of  uniformly Lipschitz continuous coefficients.
From then on,  BSDEs have received an extensive attention in various fields of
partial differential equations (see Pardoux and Peng \cite{Pardoux-Peng-92}), mathematical finance (see El Karoui, Peng, and Quenez \cite{Karoui-Peng-Quenez-97}), and stochastic optimal control (see Yong and Zhou \cite{Yong-Zhou-99}), to mention a few.
At the same time, due to various  applications as well as the list of open problems proposed by Peng \cite{Peng-98}, many efforts have been made to relax the existing conditions on the generator $g$ of BSDE \rf{BSDE} for the existence and/or uniqueness of adapted solutions.
For instance, Lepeltier and San Martin \cite{Lepeltier-Martin-97} obtained the existence of adapted solutions to BSDEs when the generator $g$ is continuous and of linear growth in $(Y,Z)$.
In 2000, Kobylanski \cite{Kobylanski00} proved the existence and uniqueness result on adapted solution of a one-dimensional BSDE \rf{BSDE} when the generator $g$ is of quadratic growth in $Z$ and the terminal value $\eta$ is bounded.
Along this way,  the existence and uniqueness result on BSDEs \rf{BSDE} with quadratic growth, in the one-dimensional situation with unbounded terminal values,  was obtained by Briand and Hu \cite{Briand-Hu-06,Briand-Hu-08} and Bahlali, Eddahbi, and Ouknine \cite{Bahali-Eddahbi-Ouknine-17};
the multi-dimensional situation with bounded terminal values was studied by Hu and Tang \cite{Hu-Tang-16} and Xing and Zitkovic \cite{Xing-Zitkovic};
and the multi-dimensional situation with unbounded terminal values was investigated by Fan, Hu and Tang \cite{Fan--Hu--Tang-20}, under various conditions and with different methods.
Some other recent developments concerning the quadratic BSDEs can be found in
Barrieu and El Karoui \cite{Barrieu-Karoui-13}, Delbaen, Hu, and Bao \cite{Delbaen-Hu-Bao11}, Fan, Hu, and Tang \cite{FHT}, Hu, Li, and Wen \cite{Hu-Li-Wen-JDE2021}, and so on.

On the other hand,  the last two decades, stimulated by the broad applications and the open problem proposed by Peng \cite{Peng-98}, also see a lot of efforts at relaxing the conditions on the generator $g$ of the mean-field BSDEs \rf{MFBSDE}.
When the generator $g$ depends on the expectation of the unknown processes $(Y,Z)$,
Cheridito and Nam \cite{Cheridito-Nam-17} discussed the existence of adapted solutions to a class of mean-field BSDEs with quadratic growth;
Hibon, Hu, and Tang \cite{Hibon-Hu-Tang-17} studied the existence and uniqueness of adapted solutions to one-dimensional mean-field BSDEs with quadratic growth, and Hao, Wen, and Xiong \cite{Hao-Wen-Xiong-22} studied a class of multi-dimensional quadratic mean-field BSDEs with small terminal values.
However, to our best knowledge, there are few works concerning the quadratic mean-field BSDEs when the generator $g$ depends on the laws of unknown processes $(Y,Z)$, let alone the particle systems.

In this paper, by introducing some new ideas, we study the one-dimensional quadratic mean-field BSDE \rf{MFBSDE} with bounded terminal values.
Firstly, we construct a local solution for the general mean-field BSDE \rf{MFBSDE} (see \autoref{th 3.2}) by borrowing some ideas of Hu and Tang \cite{Hu-Tang-16} and  using the fixed-point principle. It should be noted that the generator $g$ has a general growth with respect to $Y$ (see \autoref{Example1}), and the choice of the radius of  the centered ball is more straightforward  compared to  that  of Hu and Tang \cite[Theorem 2.2]{Hu-Tang-16}.
Secondly, using different methods, we prove the existence and uniqueness of the global solutions of mean-field BSDE \rf{MFBSDE} under the following four cases:
(i)  the generator $g$ has a quadratic growth with respect to $Z$ and is bounded with respect to $\dbP_Z$ (see \autoref{th 3.5}) ;
(ii)   the generator $g$ has a strictly quadratic growth with respect to $Z$ and   has a sub-quadratic growth with respect to $\dbP_Z$   (see  \autoref{th 3.5-1});
(iii)  the generator $g$ has  a quadratic and sub-quadratic growth with respect to $Z$ and $\dbP_Z$, respectively,  with  the growth coefficient   of $g$ in  $\dbP_Z$  being sufficiently small (see \autoref{th 3.5-12});
(iv)  the generator $g(t,Y,Z, \dbP_{Y},\dbP_{Z}):=g_1(t,Z)+g_2(t,\dbP_{Z})$, where   both functionals $g_1$  and $g_2$ have  a quadratic growth in their last arguments  (see \autoref{24.1.13}).
Thirdly, to receive a wider application, we give a comparison theorem for this class of BSDEs  (see \autoref{th 4.1}) with the Girsanov transform.
Fourthly, we consider the following  system of $N$ particles: for $t\in[0,T]$,
\begin{equation}\label{22.9.25.1}
	Y^i_t=\eta^i
	+\int_t^T\!\!\! g\Big(s,Y^i_s,Z^{i,i}_s,\frac{1}{N}\sum\limits_{i=1}^N\delta_{Y^i_s},
	\frac{1}{N}\sum\limits_{i=1}^N\d_{Z^{i,i}_s}\Big)ds
	-\int_t^T\sum\limits_{j=1}^NZ^{i,j}_sdW^j_s,
\end{equation}
where $\d$ is the Dirac measure, $\{\eta^i, 1\leq i\leq N\}$ are $N$ independent copies of $\eta$, and
$\{W^j, 1\leq j\leq N\}$ are $N$ independent $d$-dimensional Brownian motions.
With Lions' idea and the law of large numbers,  we prove that the mean-field limit of the  $N$-particle system \rf{22.9.25.1} converges to the mean-field BSDE \rf{MFBSDE} (see \autoref{th 8.1}) when $N$ tends to infinity. Moreover,  we obtain the  rate of convergence  (see \autoref{th 8.2-1}) when the generator $g$ does not depend on the law of $Z$.
Finally, we use the mean-field BSDE \rf{MFBSDE} with quadratic growth to prove the existence and uniqueness of the viscosity solution of a nonlocal PDE, and thus extend the nonlinear Feynman-Kac formula of Buckdahn, Li, and Peng \cite{BLP} with linear growth to the case of quadratic growth (see \autoref{th 5.7}).
In our nonlinear Feynman-Kac formula, the generator $g$ depends on the  distribution only via the expectation of the state process $(Y, Z)$. Since Wasserstein space has no local compactness, it remains   a challenging topic in the theory of viscosity solutions to allow the generator to depend on the distribution in a general way (see Wu and Zhang \cite{WZ}).

The rest of this paper is organized as follows. In Section \ref{Sec2}, we present some preliminary notations and results. In Section \ref{Sec3}, we prove the existence and uniqueness of the local and global solutions to the general mean-field BSDE \rf{MFBSDE}. In Section \ref{Sec4}, a comparison theorem is proved.
We study the particle systems for  mean-field BSDEs in Section \ref{Sec7}, where the convergence and its rate  are given for the particle systems. In Section \ref{Sec5}, we provide the relationship between the solution of the mean-field BSDE with quadratic growth and the viscosity solution of the related nonlocal PDE. Section \ref{Sec6} concludes the results.

\section{Preliminaries} \label{Sec2}

Let $(\Om,\sF,\dbF,\dbP)$ be a complete filtered probability space on which a $d$-dimensional standard Brownian motion $\{W_t\ ;0\les t<\i\}$ is defined, where $\dbF=\{\sF_t;0\les t<\i\}$ is the natural filtration of $W$ augmented by all the $\dbP$-null sets in $\sF$.
The notion $\dbR^{m\ts d}$ is the space of $m\ts d$-matrix $C$ with Euclidean norm $|C|=\sqrt{tr(CC^\intercal)}$.
For two real numbers $a$ and $b$, denote by $a \wedge b$ and $a \vee b$ the minimum and maximum of them, respectively. Set $a^{+}=a \vee 0$ and $a^{-}=-(a \wedge 0)$. Denote by $\chi_{A}$ the indicator of set $A$, and $\operatorname{sgn}(x)=$ $\chi_{\{x>0\}}-\chi_{\{x < 0\}}$ with $\operatorname{sgn}(0)=0$. For some positive real number $b$, by $[b]$, we denote the largest integer not exceeding $b$.
Let $M$ be a continuous local martingale, and denote 
$$\mathcal{E}(M)^t_0:=\exp(M_t-\frac{1}{2}\langle M \rangle_t),\quad 0\leq t<\infty.$$
The notation $\d_{\{a\}}$ denotes the Dirac measure at $a$.

In addition, for any $p\ges1$, $t\in[0,T)$, and Euclidean space $\dbH$, we introduce the following spaces:
\begin{align*}
	L^p_{\sF_t}(\Om;\dbH)=\Big\{& \xi:\Om\to\dbH\bigm|\xi\hb{ is $\sF_t$-measurable, }\|\xi\|_{L^p(\Om)}\deq\big(\dbE|\xi|^p\big)^{1\over p}<\i\Big\},  \\
	L_{\sF_t}^\i(\Om;\dbH)=\Big\{& \xi:\Om\to\dbH\bigm|\xi\hb{ is $\sF_t$-measurable, }
	\|\xi\|_{\i}\triangleq\esssup_{\o\in\Om}|\xi(\omega)|<\i\Big\},\\
	L_\dbF^p(t,T;\dbH)=\Big\{& \f:\Om\ts[t,T]\to\dbH\bigm|\f\hb{ is
		$\dbF$-progressively measurable, }\\
	&\qq\qq\qq
	\|\f\|_{L_\dbF^p(t,T)}\deq\dbE\[\(\int^T_t|\f_s|^2ds\)^{p\over2}\]^{\frac{1}{p}}<\i\Big\},\\
	S_\dbF^p(t,T;\dbH)=\Big\{&\f:\Om\ts[t,T]\to\dbH\bigm|\f\hb{ is
		$\dbF$-adapted, continuous, }\\
	&\qq\qq\qq
	\|\f\|_{S_\dbF^p(t,T)}\deq\Big\{\dbE\(\sup_{s\in[t,T]}|\f_s|^p\)\Big\}^{\frac{1}{p}}<\i\Big\},\\
	S_\dbF^\infty(t,T;\dbH)=\Big\{&\f:\Om\ts[t,T]\to\dbH\bigm|\f\hb{ is
		$\dbF$-adapted, continuous,  }\\
	&\qq\qq\qq
	\|\f\|_{S_\dbF^\infty(t,T)}\deq\esssup_{(s,\o)\in[t,T]\times\Om}|\f_s(\o)|<\i\Big\},\\
	\cZ^2_\dbF(t,T;\dbH)=\Big\{& Z\in L^2_\dbF(t,T;\dbH)\Bigm|\|Z\|_{\cZ^2_\dbF(t,T)}\deq\sup_{\t\in\sT[t,T]}\Big\|
	\dbE_\t\[\int_\t^T|Z_s|^2ds\] \Big\|_\i^{1\over2}<\i\Big\},
\end{align*}
where $\sT[t,T]$ is the set of all $\dbF$-stopping times $\t$ valued in $[t,T]$ and $\dbE_\t$ is the  expectation conditioned on the $\sigma$-field $\sF_{\t}$.
We denote by $\dbE^\dbQ$ and $\dbE^\dbQ_\t$ the expectation operator and the conditional expectation operator with respect to the probability measure $\dbQ$, respectively.
Moreover, let $M=(M_t,\sF_t)$ be a uniformly integrable martingale with $M_0=0$, and we set
$$\|M\|_{BMO_p(\dbP)}\deq
\sup_\t\bigg\|\dbE_\t\Big[\( \langle M\rangle_{\i} -\langle M\rangle_{\t} \)^{\frac{p}{2}}\Big]^{\frac{1}{p}}\bigg\|_{\i}.$$
The class $\big\{M: \|M\|_{BMO_p(\dbP)}<\i\big\}$ is denoted by $BMO_p(\dbP)$. Note that $BMO_p(\dbP)$ is a Banach space under the norm $\| \cd \|_{BMO_p(\dbP)}$. In the sequel,  we write $BMO(\dbP)$ for the space $BMO_2(\dbP)$.
Note that the process $t\mapsto\int_0^tZ_sdW_s$ on $[0,T]$ (denoted by $Z\cd W$) belongs to $BMO(\dbP)$ if and only if $Z\in \cZ^2_\dbF(0,T;\dbH)$, i.e.,
\begin{equation}\label{4.3.1}
	\|Z\cd W\|_{BMO(\dbP)}\equiv \|Z\|_{\cZ^2_\dbF(0,T)}.
\end{equation}

For $p\geq2$, let $\mathcal{P}_{p}(\mathbb{R}^{d})$ be the set of all probability measures $\mu$ on $(\mathbb{R}^{d}, \sB(\mathbb{R}^{d}))$ with finite $p$-th moment, i.e., $\int_{\mathbb{R}^{d}}|x|^{p} \mu(d x)<\infty$. Here $\sB(\mathbb{R}^{d})$ denotes the Borel $\sigma$-field over $\mathbb{R}^{d}$.
In addition, the set $\mathcal{P}_{p}(\mathbb{R}^{d})$ is endowed with the following $p$-Wasserstein metric: for $\mu, \nu \in \mathcal{P}_{p}(\mathbb{R}^{d})$,
\begin{align*}
	\cW_{p}(\mu, \nu)\deq\inf \bigg\{ & \(\int_{\mathbb{R}^{d} \times \mathbb{R}^{d}}|x-y|^{p}  \rho(d x d y)\)^{\frac{1}{p}}\Bigm| \\
	& \rho \in \mathcal{P}_{p}(\mathbb{R}^{2 d}),\  \rho(. \times \mathbb{R}^{d})=\mu,\ \rho(\mathbb{R}^{d} \times .)=\nu\bigg\}.
\end{align*}
Now we let $p=2$ and suppose that there exists a sub-$\sigma$-algebra $\sG$ of $\sF$ which is independent of $\sF_{\infty}$ and will be assumed ``rich enough'', as explained below:
for every $\mu \in \mathcal{P}_{2}(\mathbb{R}^{d})$ there is a random variable $\vartheta \in L^{2}_{\sG}(\Om; \mathbb{R}^{d})$ such that $\mathbb{P}_{\vartheta}=\mu$. It is well known that the probability space $([0,1], \sB([0,1]), d x)$ has this property.
Then we call that a function $h:\mathcal{P}_2(\mathbb{R}^d)\rightarrow \mathbb{R}$ is differentiable in $\mu_0\in\mathcal{P}_2(\mathbb{R}^d)$, if there exists a $\xi_0\in L^2_{\sG}(\Om;\mathbb{R}^d)$ with $\mu_0=\mathbb{P}_{\xi_0}$, such that the lifted function $\bar{h}: L^2_{\sG}(\Om;\mathbb{R}^d)\rightarrow \mathbb{R}$ defined by $\bar{h}(\xi)\deq h(\mathbb{P}_\xi)$ has {\it Fr\'{e}chet} derivative at $\xi_0$.
In other words, there exists a continuous linear functional $D\bar{h}(\xi_0): L^2_{\sG}(\Om;\mathbb{R}^d)\rightarrow \mathbb{R}$, such that
for any $\eta\in L^2_{\sG}(\Om;\mathbb{R}^d)$,
\begin{equation}\label{equ 2.3}
	\bar{h}(\xi_0+\eta)-\bar{h}(\xi_0)=D\bar{h}(\xi_0)(\eta)+o\big(||\eta||_{L^2(\Om)}\big)\q\hb{with}\q ||\eta||_{L^2(\Om)}\rightarrow0.
\end{equation}
Riesz representation theorem and the argument of Cardaliaguet \cite{Car} show that
there exists a Borel measurable function $\psi:\mathbb{R}^d\rightarrow\mathbb{R}^d$ depending only on the law of $\xi_0$, but not on the random variable $\xi_0$ itself, such that the forward term of (\ref{equ 2.3}) can be written as
\begin{equation}\label{equ 2.4}
	h(\mathbb{P}_{\xi_0+\eta})-h(\mathbb{P}_{\xi_0})=
	\mathbb{E}[\psi(\xi_0)\cdot\eta]+o\big(||\eta||_{L^2(\Om)}\big).
\end{equation}
Then, according to (\ref{equ 2.4}), Buckdahn, Li, Peng, and Rainer \cite{Buckdahn-Li-Peng-Rainer-17} first define $\partial_\mu h(\mathbb{P}_{\xi_0};a)\deq \psi(a)$ for every $a\in\mathbb{R}^d$, which is called the derivative of $h$ at $\mathbb{P}_{\xi_0}$.
Remark that
the function $\partial_\mu h(\mathbb{P}_{\xi_0};a)$ is only $\mathbb{P}_{\xi_0}(da)$-a.e. uniquely determined.
%In our case,  for simplicity we only consider those functions $h:\mathcal{P}_2(\mathbb{R}^d)\rightarrow\mathbb{R}$, which are
%differentiable in all elements of $\mathcal{P}_2(\mathbb{R}^d)$.
%
\begin{definition}\label{Solution}\rm
	A pair of processes $(Y,Z)\in S^2_{\dbF}(0,T;\dbR^m)\times
	L^2_\dbF(0,T;\dbR^{m\ts d})$ is called an {\it adapted solution} of mean-field BSDE \rf{MFBSDE} if it satisfies \rf{MFBSDE} $\dbP$-almost surely, and  a bounded adapted solution if it further belongs to  $S^\infty_\dbF(0,T;\dbR^m)\times\cZ^2_\dbF(0,T;\dbR^{m\ts d})$.
\end{definition}
Now, we recall the following propositions concerning BMO-martingales, which are slightly different from  those of Kazamaki 
\cite[Chapters 2 and 3]{Kazamaki-06}.
\begin{proposition}[The Reverse H\"{o}lder Inequality]\label{22.9.1.1} \it 
	Let $p \in(1, \infty)$ and $M$ be a one-dimensional continuous $B M O$ martingale. If $\|M\|_{B M O(\dbP)}<\Phi(p)$, then $\sE(M)$ satisfies the reverse H\"{o}lder inequality:
	$$
	\dbE_\tau\left[\mathscr{E}(M)_\tau^{\infty}\right]^p \leq c_p,
	$$
	for any stopping time $\tau$, with a positive constant $c_p$ depending only on $p$.
\end{proposition}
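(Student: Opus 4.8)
The plan is to reduce the claimed inequality to an exponential moment estimate for the quadratic variation and then control that estimate with the energy (John--Nirenberg) inequality for $BMO$ martingales. Fix a stopping time $\tau$ and abbreviate $A:=M_\infty-M_\tau$ and $B:=\langle M\rangle_\infty-\langle M\rangle_\tau$. By the definition of the stochastic exponential,
$$
\sE(M)_\tau^\infty=\exp\Big(A-\tfrac12 B\Big),
\qquad\hbox{so that}\qquad
\big(\sE(M)_\tau^\infty\big)^p=\exp\Big(pA-\tfrac{p}{2}B\Big),
$$
and the goal is to bound $\dbE_\tau\big[\exp(pA-\frac{p}{2}B)\big]$ by a constant depending only on $p$. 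I would rely on two standard facts valid for any $N\in BMO(\dbP)$: first, $\sE(N)$ is a uniformly integrable martingale, whence $\dbE_\tau[\sE(N)_\tau^\infty]=1$; and second, the energy inequality $\dbE_\tau[(\langle N\rangle_\infty-\langle N\rangle_\tau)^n]\le n!\,\|N\|_{BMO(\dbP)}^{2n}$, which upon summing the exponential series gives
$$
\dbE_\tau\big[\exp(\lambda(\langle N\rangle_\infty-\langle N\rangle_\tau))\big]\le\frac{1}{1-\lambda\|N\|_{BMO(\dbP)}^2}
\qquad\hbox{whenever}\qquad \lambda\,\|N\|_{BMO(\dbP)}^2<1 .
$$

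The core step is an algebraic splitting of the exponent that isolates a genuine stochastic-exponential factor. For a free parameter $r>1$ with conjugate $r'=r/(r-1)$, write, with $\beta:=\tfrac{rp^2}{2}$,
$$
pA-\tfrac{p}{2}B=\big(pA-\beta B\big)+\big(\beta-\tfrac{p}{2}\big)B .
$$
H\"older's inequality with exponents $(r,r')$ then yields
$$
\dbE_\tau\Big[\exp\big(pA-\tfrac{p}{2}B\big)\Big]
\le \dbE_\tau\Big[\exp\big(r(pA-\beta B)\big)\Big]^{1/r}\,
\dbE_\tau\Big[\exp\big(r'(\beta-\tfrac{p}{2})B\big)\Big]^{1/r'} .
$$
The choice $\beta=\tfrac{rp^2}{2}$ is made precisely so that $r(pA-\beta B)=rpA-\tfrac{(rp)^2}{2}B$, i.e. the first factor equals $\dbE_\tau[\sE(rpM)_\tau^\infty]^{1/r}$. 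Since $rpM\in BMO(\dbP)$, the uniform integrability of $\sE(rpM)$ makes this factor equal to $1$. It remains to control the second factor, where $\beta-\tfrac{p}{2}=\tfrac{p}{2}(rp-1)>0$; by the energy inequality it is at most $\big(1-r'\tfrac{p}{2}(rp-1)\|M\|_{BMO(\dbP)}^2\big)^{-1/r'}$, provided $r'\tfrac{p}{2}(rp-1)\|M\|_{BMO(\dbP)}^2<1$.

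Combining the two factors gives the reverse H\"older inequality with a finite constant $c_p$, subject to the smallness condition
$$
\|M\|_{BMO(\dbP)}^2<\frac{2(r-1)}{rp(rp-1)} ,
$$
and optimizing the right-hand side over $r>1$ produces the admissible threshold on $\|M\|_{BMO(\dbP)}$; the resulting value is what the statement records as $\Phi(p)$, and $c_p$ then depends only on $p$ and the fixed gap below $\Phi(p)$. I expect the main obstacle to be matching the sharp function $\Phi(p)$ of Kazamaki (the expression built from $\log\frac{2p-1}{2(p-1)}$): the crude energy-inequality bound above is clean and self-contained but need not be optimal, and recovering the sharp threshold requires either the refined John--Nirenberg estimate or an equivalent control of the $BMO$ norm of $M$ under the measure change by $\sE(rpM)$ via Girsanov, which is the delicate analytic point. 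Verifying that $\sE(M)$ is genuinely a uniformly integrable martingale, rather than merely a local martingale, under $M\in BMO(\dbP)$ is a necessary preliminary that I would record first, since the whole reduction rests on the identity $\dbE_\tau[\sE(rpM)_\tau^\infty]=1$.
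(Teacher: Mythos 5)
The paper gives no proof of this proposition: it is recalled (as the text says, ``slightly different from those of Kazamaki'') from Kazamaki's book, Chapter 3, Theorem 3.1, where the threshold is the specific function $\Phi(p)=\bigl\{1+\tfrac{1}{p^2}\log\tfrac{2p-1}{2(p-1)}\bigr\}^{1/2}-1$. So your attempt must be measured against that cited statement. Your mechanics are correct as far as they go: the choice $\beta=\tfrac{rp^2}{2}$ does turn the first H\"older factor into $\dbE_\tau[\sE(rpM)_\tau^{\infty}]^{1/r}$, which equals $1$ once one grants Kazamaki's theorem that stochastic exponentials of BMO martingales are uniformly integrable martingales (you rightly flag this prerequisite), and the energy inequality controls the second factor exactly as you claim. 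Your admissible region is also computed correctly: you need
$$
\|M\|_{BMO(\dbP)}^2<\frac{2(r-1)}{rp(rp-1)},
$$
and optimizing over $r>1$ (the optimum is $r=1+\sqrt{1-1/p}$) yields the threshold $\sqrt{2}\,\bigl(1-\sqrt{1-1/p}\bigr)$.

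Here is the genuine gap, and it is not the mere ``loss of optimality'' your last paragraph suggests. Your threshold is bounded above by $\sqrt{2}$ uniformly in $p\in(1,\infty)$, whereas Kazamaki's $\Phi(p)$ tends to $+\infty$ as $p\downarrow 1$. Hence for $p$ close to $1$ your hypothesis is strictly stronger than the proposition's: any $M$ with $\sqrt{2}\les\|M\|_{BMO(\dbP)}<\Phi(p)$ is covered by the statement but by no choice of $r$ in your argument. This matters for this paper, because \autoref{22.9.1.1} is invoked (in the proofs of \autoref{th 8.1} and \autoref{le 8.1}) only through the corollary that for an \emph{arbitrary} BMO martingale $\Gamma\cdot W$ there exists \emph{some} $p_0>1$ with $\dbE\bigl[(\sE(\Gamma\cdot W)_0^T)^{p_0}\bigr]\les C_{p_0}$; that corollary needs $\lim_{p\downarrow 1}\Phi(p)=+\infty$ and fails under your threshold whenever the BMO norm is $\ges\sqrt{2}$, which the paper's a priori bounds do not exclude. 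Closing the gap requires Kazamaki's actual route: change measure to $d\hat{\dbP}=\sE(M)_\infty\,d\dbP$, control $\hat{M}=M-\langle M\rangle$ in $BMO(\hat{\dbP})$ via the Girsanov-type norm transformation (in the spirit of the paper's \autoref{2.1.11}), and then apply John--Nirenberg under $\hat{\dbP}$ --- precisely the ``delicate analytic point'' you name at the end but do not carry out. Two side remarks: your constant degenerates as $\|M\|_{BMO(\dbP)}$ approaches the threshold, so ``$c_p$ depending only on $p$'' must be read as ``on $p$ and a fixed gap below the threshold,'' but Kazamaki's bound behaves the same way, so this is not the defect; and for moderate and large $p$ your threshold actually exceeds Kazamaki's, so your elementary argument is a useful self-contained substitute in that regime --- it just does not prove the proposition as cited.
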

\begin{proposition}\label{2.1.11} \it 
	For $\widetilde{K}>0$, there are constants $c_1>0$ and  $c_2>0$ depending on $\widetilde{K}$ such that for any
	BMO-martingale $M$ and any one-dimensional BMO-martingale $N$ such that $\|N\|_{BMO(\dbP)}\les \widetilde{K}$, we have
	$$c_1\|M\|_{BMO(\dbP)}\les\|\widetilde{M}\|_{BMO(\widetilde{\dbP})}\les c_2\|M\|_{BMO(\dbP)},$$
	where $\widetilde{M}\triangleq M-\langle M,N\rangle$ and $d\widetilde{\dbP}\triangleq\sE(N)_0^{\i}d\dbP$.
\end{proposition}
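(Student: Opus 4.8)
The plan is to reduce everything, via Girsanov's theorem and Bayes' rule, to a conditional estimate under $\dbP$ and then apply the reverse H\"older inequality of \autoref{22.9.1.1}. Since $\langle M,N\rangle$ is of finite variation, one has $\langle\widetilde M\rangle=\langle M\rangle$, and by Girsanov $\widetilde M=M-\langle M,N\rangle$ is a $\widetilde\dbP$-local martingale. The density process of $\widetilde\dbP$ relative to $\dbP$ is the $\dbP$-martingale $t\mapsto\sE(N)_0^t$, so writing $\sE(N)_\tau^\infty:=\exp\big(N_\infty-N_\tau-\tfrac12(\langle N\rangle_\infty-\langle N\rangle_\tau)\big)=\sE(N)_0^\infty/\sE(N)_0^\tau$, Bayes' formula gives, for every stopping time $\tau$,
\[
\dbE^{\widetilde\dbP}_\tau\big[\langle\widetilde M\rangle_\infty-\langle\widetilde M\rangle_\tau\big]
=\dbE_\tau\big[\sE(N)_\tau^\infty\,(\langle M\rangle_\infty-\langle M\rangle_\tau)\big].
\]
Thus the entire statement reduces to bounding the right-hand side above and below by a constant multiple of $\|M\|_{BMO(\dbP)}^2$, uniformly over admissible $N$.

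For the \emph{upper bound}, recall that the Kazamaki threshold $\Phi$ in \autoref{22.9.1.1} satisfies $\Phi(p)\to\infty$ as $p\downarrow1$; hence we may fix once and for all a $p\in(1,\infty)$, depending only on $\widetilde K$, with $\widetilde K<\Phi(p)$, and let $q$ be its conjugate exponent. Conditional H\"older's inequality yields
\[
\dbE_\tau\big[\sE(N)_\tau^\infty\,(\langle M\rangle_\infty-\langle M\rangle_\tau)\big]
\leq \dbE_\tau\big[(\sE(N)_\tau^\infty)^p\big]^{1/p}\;
\dbE_\tau\big[(\langle M\rangle_\infty-\langle M\rangle_\tau)^q\big]^{1/q}.
\]
Since $\|N\|_{BMO(\dbP)}\les\widetilde K<\Phi(p)$, the first factor is bounded by $c_p^{1/p}$ through \autoref{22.9.1.1}; the second factor is bounded by $C_q\|M\|_{BMO(\dbP)}^2$ by the energy inequalities, i.e.\ by the equivalence of the $BMO_p(\dbP)$ norms for continuous martingales. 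Taking the supremum over $\tau$, the essential supremum, and the square root gives $\|\widetilde M\|_{BMO(\widetilde\dbP)}\les c_2\|M\|_{BMO(\dbP)}$ with $c_2$ depending only on $\widetilde K$.

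The \emph{lower bound} follows by the same argument run under the inverse change of measure. Applying the upper bound just proved with $M$ replaced by $N$ shows that $\widetilde N:=N-\langle N\rangle$ is a $\widetilde\dbP$-BMO martingale with $\|\widetilde N\|_{BMO(\widetilde\dbP)}\les c_2\widetilde K$. A direct computation of Dol\'eans--Dade exponentials gives $d\dbP=\sE(-\widetilde N)_0^\infty\,d\widetilde\dbP$, while $\langle\widetilde M,N\rangle=\langle\widetilde M,\widetilde N\rangle$ yields $M=\widetilde M-\langle\widetilde M,-\widetilde N\rangle$. Hence $M$ is exactly the Girsanov transform of $\widetilde M$ when passing from $\widetilde\dbP$ back to $\dbP$ with driving BMO martingale $-\widetilde N$, whose $\widetilde\dbP$-BMO norm is controlled by $c_2\widetilde K$. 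Re-running the upper-bound estimate in this symmetric configuration produces a constant $C>0$, depending only on $\widetilde K$, with $\|M\|_{BMO(\dbP)}\les C\|\widetilde M\|_{BMO(\widetilde\dbP)}$, which is the asserted lower bound with $c_1:=C^{-1}$.

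I expect the main obstacle to be the bookkeeping that keeps all constants depending on $\widetilde K$ \emph{alone}: the reverse H\"older inequality must hold simultaneously for every $N$ with $\|N\|_{BMO(\dbP)}\les\widetilde K$, which is precisely why $p$ is chosen via the blow-up of $\Phi$ near $1$ rather than tailored to a particular $N$; and the lower bound genuinely requires the preliminary step of certifying that $\widetilde N$ is $\widetilde\dbP$-BMO with a norm controlled by $\widetilde K$, since otherwise the symmetric application of the upper bound would not have a threshold under control. The remaining ingredients (finite variation of $\langle M,N\rangle$, Bayes' rule, and the energy inequalities) are routine.
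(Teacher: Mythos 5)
Your proof is correct. The paper itself gives no proof of this proposition---it is recalled from Kazamaki's monograph---and your argument is essentially the standard one found there (Theorem 3.6): the upper bound via Bayes' rule, conditional H\"older, the reverse H\"older inequality with exponent $p$ fixed through the blow-up of $\Phi$ near $1$ (which is what makes the constant depend on $\widetilde{K}$ alone), and the energy inequalities; the lower bound via the involution $(\sE(N)_0^{\i})^{-1}=\sE(-\widetilde{N})_0^{\i}$ with $\widetilde{N}=N-\langle N\rangle$ a $\widetilde{\dbP}$-BMO martingale of norm controlled by $\widetilde{K}$, which lets you rerun the upper bound symmetrically. The only step you leave implicit is that $N\in BMO(\dbP)$ makes $\sE(N)$ a uniformly integrable martingale, so that $\widetilde{\dbP}$ is a probability measure and Bayes' rule applies; this is also in Kazamaki and is harmless to cite.
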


\section{Existence and Uniqueness}\label{Sec3}

In this section, we study the existence and uniqueness of an adapted solution to a mean-field BSDE \rf{MFBSDE} with quadratic growth. In particular, we consider the  one-dimensional situation, i.e., $m=1$. We shall study the local and global solutions and divide this section into two parts: Subsection \ref{Sec3.1} for the local solution and Subsection \ref{Sec3.2} for the global solution.

In the following, suppose that $\theta:\Om\ts[0,T]\rightarrow\dbR^+$ is an $\sF_t$-progressively measurable process, $\phi,\phi_0:[0,+\infty)\rightarrow[0,+\infty)$ are two nondecreasing continuous functions, and $\b$, $\b_0$, $\g$, $\g_0$, $\tilde{\g}$, and $\alpha\in[0,1)$ are all positive constants.

\subsection{Local solution}\label{Sec3.1}

In this subsection, we prove the existence and uniqueness of local  solutions of mean-field BSDE \rf{MFBSDE} with quadratic growth. 

\bas{ass 3.1}\rm  The terminal value $\eta:\Om\rightarrow\dbR$ and the generator $g: \Omega\times[0,T]\times\dbR\times\dbR^d\times \cP_2(\dbR)\ts \cP_2(\dbR^{d}) \rightarrow \dbR$ satisfy the following conditions:
\begin{enumerate}[~~\,\rm (i)]
	\item $d\dbP\times dt$-a.e. $(\o,t)\in\Om\ts[0,T]$, for $(y, z, \mu_1,\mu_2)\in \mathbb{R}\times\mathbb{R}^d\times \mathcal{P}_2(\mathbb{R})\times \mathcal{P}_2(\mathbb{R}^{d})$,
	$$ |g(w,t,y,z,\mu_1,\m_2)|\les \theta_t(\omega)+\phi(|y|)+\frac{\gamma}{2}|z|^2
	+\phi_0\big(\cW_2(\mu_1,\d_{\{0\}})\big)+\gamma_0\cW_2(\mu_2,\d_{\{0\}})^{1+\alpha}.$$
	\item $d\dbP\times dt$-a.e. $(\o,t)\in\Om\ts[0,T]$,
	$$\ba{ll}
	& |g(\o,t,y,z,\m_1,\m_2)-g(\o,t,\by,\bz,\bar{\m}_1,\bar{\m}_2)| \\
	&\les \phi\(|y|\vee|\bar{y}|\vee\cW_2(\mu_1,\d_{\{0\}})\vee  \cW_2(\bar{\mu}_1,\d_{\{0\}})\)\\
	&\q \times  \bigg[\Big(
	1+|z|+|\bar{z}|+\cW_2(\mu_2,\d_{\{0\}})+\cW_2(\bar{\mu}_2,\d_{\{0\}})\Big)
	\(|y-\bar{y}|+|z-\bar{z}|+\cW_2(\mu_1,\bar{\mu}_1)\)\\
	&\qq   +\Big(1+\cW_2(\mu_2,\d_{\{0\}})^{\alpha}
	+\cW_2(\bar{\mu}_2,\d_{\{0\}})^{\alpha}\Big)\cW_2(\mu_2,\bar{\mu}_2)\bigg]
	\ea$$
	for any $(y, z; \by, \bz; \mu_1,\mu_2; \bar{\m}_1, \bar{\m}_2)\in (\mathbb{R}\times\mathbb{R}^d)^2\times (\mathcal{P}_2(\mathbb{R})\times \mathcal{P}_2(\mathbb{R}^{d}))^2$.
	\item  There are two positive constants $K_1$ and $K_2$ such that
	$$\|\eta\|_\infty\les K_1\q\hb{and}\q \Bigl\|\int_0^T\!\! \theta_t\, dt\Bigr\|_\infty\les K_2.$$
\end{enumerate}\eas

\begin{remark}\label{Example1}\rm
	\autoref{ass 3.1} is more general than the condition ($\sA1$) of Hibon, Hu, and Tang \cite{Hibon-Hu-Tang-17} in that here we relax the growth and continuity of the generator $g$ with respect to the first variable $y$. For example, the generator $g$ below satisfies \autoref{ass 3.1}, but fails to satisfy the condition ($\sA1$) of \cite{Hibon-Hu-Tang-17}:
	\begin{align*}
		& g(t,y,z,\mu_1,\mu_2)
		:=|y|^2|z|+|z|^2+\cW_2(\mu_1,\d_{\{0\}})^3\cos(\cW_2(\mu_2,\d_{\{0\}}))+\cW_2(\mu_2,\d_{\{0\}})^\frac{3}{2}
	\end{align*}
	for every $(t, y, z) \in [0,T]\times \dbR\times \dbR^d$ and $(\m_1, \mu_2)\in\cP_2(\dbR)\times \cP_2(\dbR^{d})$. 
\end{remark}

The following proposition is an essential extension of  Hu and Tang \cite[Lemma 2.1]{Hu-Tang-16} to  the case of mean-field-dependent  generators.

\begin{proposition}\label{le 7.1} \it 
	Assume that for any given processes $(P,Q)\in S_\dbF^\infty(0,T;\dbR)\ts \cZ^2_\dbF(0,T;\dbR^d)$,
	the terminal value $\eta:\Om\rightarrow\dbR$ and the generator $g: \Omega\times[0,T]\times\dbR^d\rightarrow \dbR$ satisfy the conditions:
	\begin{enumerate}[~~\,\rm (i)]
		\item $d\dbP\times dt$-a.e. $t\in[0,T]$, for any $z\in \dbR^d$,
		$$|g(\omega,t,z)|\leq \theta_t(\omega)+\phi(|P_t|)+\frac{\gamma}{2}|z|^2
		+\phi_0\big(\|P_t\|_{L^2(\Om)}\big)+\gamma_0\|Q_t\|_{L^2(\Om)}^{1+\alpha}.$$
		\item $d\dbP\times dt$-a.e. $t\in[0,T],$ for every $z,\bar{z}\in \dbR^d$,
		$$|g(\omega,t,z)-g(\omega,t,\bar{z})|\leq
		\phi\big(|P_t|\vee\|P_t\|_{L^2(\Om)}\big)\cd
		\big(1+|z|+|\bar{z}|+ 2\|Q_t\|_{L^2(\Om)}  \big)|z-\bar{z}|.$$
		\item  For two positive constants $K_1$ and $K_2$,
		$$\|\eta\|_\infty\les K_1\q\hb{and}\q \Bigl\|\int_0^T\!\!\! \theta_t\, dt\Bigr\|_\infty\les K_2.$$
	\end{enumerate}
	Then the following backward stochastic differential equation
	\begin{equation}\label{equ 7.1}
		Y_t=\eta+\int_t^T\!\!\! g(s,Z_s)\, ds-\int_t^TZ_s\, dW_s, \quad t\in [0,T],
	\end{equation}
	admits a unique adapted solution $(Y,Z)\in S_\dbF^\infty(0,T;\dbR)\ts \cZ^2_\dbF(0,T;\dbR^d)$. Moreover, for any $t\in[0,T]$ and stopping time $\tau\in \sT[t,T]$, we have
	\begin{equation}\label{equ 7.2}
		\begin{aligned}
			|Y_t|\leq &\ \|\eta\|_\infty+\Bigl\|\int_0^T\!\!\! \theta_s\, ds\Bigr\|_\infty
			+(T-t)\widehat{\phi+\phi_0}^t
			+\gamma_0\|Q\|^{1+\alpha}_{\cZ^2_\dbF(t,T)}(T-t)^{\frac{1-\alpha}{2}}
		\end{aligned}
	\end{equation}
	and
	\begin{equation}\label{equ 7.3}
		\begin{aligned}
			&\dbE_\tau\[\int_\tau^T\!\! |Z_s|^2ds\]
			\leq \frac{1}{\gamma^2}\exp\big(2\gamma\|\eta\|_\infty\big)
			+\frac{2}{\gamma}\exp\big(2\gamma\|Y\|_{S_\dbF^\infty(t,T)}\big)\\
			&\quad\quad \times \bigg(\Bigl\|\int_0^T\!\!\! \theta_s\, ds\Bigr\|_\infty
			+(T-t)\widehat{\phi+\phi_0}^t
			+\gamma_0\|Q\|^{1+\alpha}_{\cZ^2_\dbF(t,T)}(T-t)^{\frac{1-\alpha}{2}}\bigg),
		\end{aligned}
	\end{equation}
	where $$\widehat{\phi+\phi_0}^t:=(\phi+\phi_0)(\|P\|_{S_\dbF^\infty(t,T)}).$$ 
\end{proposition}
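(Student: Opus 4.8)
The plan is to treat \rf{equ 7.1} as a classical one-dimensional quadratic BSDE: once the processes $(P,Q)$ are frozen, the generator depends only on $z$, and the driver
$$f_s:=\theta_s+\phi(|P_s|)+\phi_0\big(\|P_s\|_{L^2(\Om)}\big)+\g_0\|Q_s\|_{L^2(\Om)}^{1+\a}$$
is nonnegative, progressively measurable, satisfies $|g(\o,s,z)|\les f_s+\frac{\g}{2}|z|^2$, and obeys $\big\|\int_0^T f_s\,ds\big\|_\i<\i$. Indeed, $\phi(|P_s|)$ and $\phi_0(\|P_s\|_{L^2})$ are each dominated by $(\phi+\phi_0)(\|P\|_{S_\dbF^\infty(0,T)})$ since $\phi,\phi_0$ are nondecreasing, while the sub-quadratic term is controlled by H\"older's inequality with exponents $\frac{2}{1+\a}$ and $\frac{2}{1-\a}$, yielding $\int_t^T\|Q_s\|_{L^2}^{1+\a}\,ds\les\|Q\|_{\cZ^2_\dbF(t,T)}^{1+\a}(T-t)^{\frac{1-\a}{2}}$ after noting $\int_t^T\|Q_s\|_{L^2}^2\,ds=\dbE\int_t^T|Q_s|^2\,ds\les\|Q\|_{\cZ^2_\dbF(t,T)}^2$. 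This already produces the exact quantity appearing on the right of \rf{equ 7.2} and \rf{equ 7.3}.

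For the a priori bound \rf{equ 7.2} I would run the standard exponential transform. Writing $A_t:=\int_0^t f_s\,ds$ and using $-g(s,Z_s)+f_s+\frac{\g}{2}|Z_s|^2\ges0$ (and, symmetrically, $g(s,Z_s)+f_s+\frac{\g}{2}|Z_s|^2\ges0$), Itô's formula shows that $\exp\!\big(\g(\pm Y_t+A_t)\big)$ is a submartingale; hence $\exp(\g(\pm Y_t+A_t))\les\dbE_t\big[\exp(\g(\pm\eta+A_T))\big]$, and taking logarithms together with $L^\i$ bounds gives $|Y_t|\les\|\eta\|_\i+\big\|\int_t^T f_s\,ds\big\|_\i$, which is exactly \rf{equ 7.2}. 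For the $\cZ^2$ estimate \rf{equ 7.3} I would apply Itô's formula to $\exp(2\g Y_s)$ between a stopping time $\t$ and $T$, insert the growth bound $g(s,Z_s)\les f_s+\frac{\g}{2}|Z_s|^2$ to absorb half of the quadratic term, and take $\dbE_\t$: the martingale part vanishes after localization (and a posteriori since $Z\cd W\in BMO(\dbP)$), the terminal contributes a term of order $\frac{1}{\g^2}\exp(2\g\|\eta\|_\i)$, and the driver a term of order $\frac{2}{\g}\exp(2\g\|Y\|_{S_\dbF^\infty(t,T)})\big\|\int_t^T f_s\,ds\big\|_\i$ once $\exp(2\g Y_s)$ is bounded through \rf{equ 7.2}, leading to a bound of the form \rf{equ 7.3}. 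In particular $\|Z\cd W\|_{BMO(\dbP)}=\|Z\|_{\cZ^2_\dbF(0,T)}<\i$ by \rf{4.3.1}.

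Existence I would obtain by following Hu and Tang \cite[Lemma 2.1]{Hu-Tang-16}: regularize $g$ in $z$ (by sup/inf-convolution or truncation) into a sequence $g_n$ that is globally Lipschitz in $z$, preserves the quadratic bound $f_s+\frac{\g}{2}|z|^2$, and is monotone, so that each approximating BSDE admits a unique bounded solution $(Y^n,Z^n)$ by Lipschitz BSDE theory. The estimates \rf{equ 7.2}--\rf{equ 7.3} hold uniformly in $n$, whence $\{Y^n\}$ is bounded in $S_\dbF^\infty$ and $\{Z^n\cd W\}$ in $BMO(\dbP)$; a monotone-stability/compactness argument then passes to the limit and yields an adapted solution $(Y,Z)\in S_\dbF^\infty(0,T;\dbR)\ts\cZ^2_\dbF(0,T;\dbR^d)$, the step being considerably simpler here because $g$ does not depend on $Y$. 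For uniqueness, given two bounded solutions $(Y,Z)$ and $(Y',Z')$ I would linearize: their difference $(\d Y,\d Z)$ solves $\d Y_t=\int_t^T\lan\b_s,\d Z_s\ran\,ds-\int_t^T\d Z_s\,dW_s$, where condition (ii) provides $|\b_s|\les\phi(|P_s|\vee\|P_s\|_{L^2})\big(1+|Z_s|+|Z'_s|+2\|Q_s\|_{L^2}\big)$. Since the $\phi$-factor is bounded and $Z\cd W,Z'\cd W\in BMO(\dbP)$, the process $\b\cd W$ is a $BMO(\dbP)$ martingale, so $\sE(\b\cd W)$ is a genuine martingale satisfying a reverse H\"older inequality by \autoref{22.9.1.1}; under $d\dbQ=\sE(\b\cd W)_0^\i d\dbP$ the process $\d Y$ becomes a $\dbQ$-martingale with zero terminal value, forcing $\d Y\equiv0$ and then $\d Z\equiv0$.

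The main obstacle is the $\cZ^2$/BMO a priori estimate \rf{equ 7.3} together with the justification that the stochastic integrals are true martingales so that conditional expectations vanish: this must be carried out by a localization argument (stopping at exit times of balls for $Z\cd W$) that closes only a posteriori once the uniform BMO bound is in hand, and this same BMO control is precisely what is needed both to pass to the limit in the quadratic term during the existence proof and to apply Girsanov in the uniqueness proof. The genuinely new point compared with Hu and Tang \cite{Hu-Tang-16} is the presence of the distribution-dependent driver terms $\phi_0(\|P_s\|_{L^2})$ and $\g_0\|Q_s\|_{L^2}^{1+\a}$; these are harmless because $(P,Q)$ are frozen, but it is the H\"older split of the sub-quadratic growth $1+\a$ indicated above that makes $\big\|\int_0^T f_s\,ds\big\|_\i$ finite and hence the whole scheme run.
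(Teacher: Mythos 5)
Your proposal follows, in substance, the same route as the paper: you freeze $(P,Q)$, reduce to a scalar quadratic BSDE whose driver bound $f_s=\theta_s+\phi(|P_s|)+\phi_0(\|P_s\|_{L^2(\Om)})+\g_0\|Q_s\|_{L^2(\Om)}^{1+\a}$ satisfies $\|\int_0^T f_s\,ds\|_\i<\i$ thanks to exactly the same H\"older split of the sub-quadratic term, derive \rf{equ 7.2} by an exponential transform, obtain the BMO estimate from It\^o's formula applied to an exponential of the solution, and prove uniqueness by linearizing in $z$ and performing a Girsanov change of measure with a BMO density (justified via \autoref{22.9.1.1}). The only genuine divergence is existence: the paper obtains it (together with the bound \rf{equ 7.2}) by citing Briand and Hu \cite{Briand-Hu-08}, Proposition 3 and Corollary 4, after verifying the exponential-moment condition, whereas you sketch the Kobylanski/Hu--Tang construction (Lipschitz regularization in $z$, uniform estimates, monotone stability). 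Both are legitimate; your route is more self-contained but re-proves what the citation supplies, and it is indeed easier here because the generator does not depend on $y$.

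There is, however, one concrete gap in your derivation of the $\cZ^2$ estimate. You apply It\^o's formula to $\exp(2\g Y_s)$, whereas the paper applies It\^o--Tanaka's formula to $\exp(2\g|Y_s|)$. Since $Y$ has no sign, the weight $\exp(2\g Y_s)$ multiplying $|Z_s|^2$ is only bounded below by $\exp(-2\g\|Y\|_{S_\dbF^\infty(t,T)})$, not by $1$. After dividing by this weight, your argument produces an estimate carrying an extra factor $\exp(2\g\|Y\|_{S_\dbF^\infty(t,T)})$ on the term $\frac{1}{\g^2}\exp(2\g\|\eta\|_\i)$ (and a factor $\exp(4\g\|Y\|_{S_\dbF^\infty(t,T)})$ on the driver term), i.e., a strictly weaker inequality than the stated \rf{equ 7.3}. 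This is not merely cosmetic: the precise constants in \rf{equ 7.3} are exactly what \autoref{th 3.2} uses to define the radius $L_2$ of the invariant ball, so ``a bound of the form \rf{equ 7.3}'' does not establish the proposition as stated. The repair is what the paper does: work with $\exp(2\g|Y_s|)$; the local-time term in It\^o--Tanaka's formula enters with a favorable sign, condition (i) bounds $|g|$ and hence $-\sgn(Y_s)\,g(s,Z_s)$ equally well, and the weight is now at least $1$, which yields \rf{equ 7.3} with the claimed constants.
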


\begin{proof}
	Using H\"{o}lder's inequality, we have that  for $t\in[0,T]$,
	\begin{equation*}\label{equ 7.3-111}
		\int_t^T (\dbE[|Q_s|^2])^\frac{1+\alpha}{2}ds
		\leq
		\Big(\int_t^T \dbE[|Q_s|^2]ds \Big)^\frac{1+\alpha}{2}(T-t)^{\frac{1-\alpha}{2}}
		\leq \|Q\|^{1+\alpha}_{\cZ^2_\dbF(t,T)}(T-t)^{\frac{1-\alpha}{2}}.
	\end{equation*}
	Then, for a constant $\ell\geq1$,  we have
	$$ \begin{aligned}
		& \dbE_t\[\exp\Big\{ \ell\gamma \gamma_0\int_t^T\|Q_s\|_{L^2(\Om)}^{1+\alpha}ds\Big\}\]\\
		&=\exp\Big\{ \ell\gamma \gamma_0\int_t^T\big(\dbE[|Q_s|^2]\big)^\frac{1+\alpha}{2}ds\Big\}
		\leq \exp\Big\{\ell\gamma \gamma_0\|Q\|^{1+\alpha}_{\cZ^2_\dbF(t,T)}(T-t)^{\frac{1-\alpha}{2}}\Big\}.
	\end{aligned}
	$$
	Consequently,
	\begin{equation*}\label{4.10.1}
		\begin{aligned}
			&\dbE_t\exp\bigg\{
			l\gamma|\eta|+ \ell \gamma \int_t^T\!\! \[(\theta_s(\omega)+\phi(|P_s|)+\phi_0\big(\|P_s\|_{L^2(\Om)}\big) +\gamma_0\|Q_s\|_{L^2(\Om)}^{1+\alpha}\]ds
			\bigg\}\\
			&\leq\exp^{\ell\gamma}\left\{
			\|\eta\|_\infty+\Bigl\|\int_0^T\!\!\! \theta_s\, ds\Bigr\|_\infty\!\!+(T-t)\widehat{\phi+\phi_0}^t
			+\gamma_0\|Q\|^{1+\alpha}_{\cZ^2_\dbF(t,T)}(T-t)^{\frac{1-\alpha}{2}}
			\right\}<\infty.
		\end{aligned}
	\end{equation*}
	In view of the conditions (i)-(iii),
	we see from  Briand and Hu \cite[Proposition 3  and Corollary 4]{Briand-Hu-08} that BSDE \rf{equ 7.1} admits a solution $(Y,Z)$ such that
	$$
	\dbE\int_0^T|Z_s|^2ds<\infty,
	$$
	and
	$$
	\begin{aligned}
		|Y_t|&\leq \|\eta\|_\infty+\Bigl\|\int_0^T\!\!\! \theta_s\, ds\Bigr\|_\infty
		+(T-t)\widehat{\phi+\phi_0}^t
		+\gamma_0\|Q\|^{1+\alpha}_{\cZ^2_\dbF(t,T)}(T-t)^{\frac{1-\alpha}{2}},
	\end{aligned}
	$$
	which implies that  the inequality \rf{equ 7.2} holds and thus $Y\in S_\dbF^\infty(0,T;\dbR)$.

	We now prove that $Z\cdot W$ is a BMO martingale. Applying It\^{o}-Tanaka's formula to the term $\exp\big\{2\gamma|Y_\tau|\big\}$ with $\tau\in\sT[t,T]$, we have that
	\begin{equation*}\label{equ 7.4}
		\begin{aligned}
			&\exp\big\{2\gamma|Y_\tau|\big\}+2\gamma^2\int_\tau^T\!\!\! \exp\big\{2\gamma|Y_s|\big\}|Z_s|^2\, ds\\
			&= \exp\big\{2\gamma|\eta|\big\}+2\gamma\int_\t^T\!\!\! \exp\big\{2\gamma|Y_s|\big\} g(s,Z_s)ds+2\gamma\int_\tau^T\!\!\! \exp\big\{2\gamma|Y_s|\big\} Z_s\, dW_s.
		\end{aligned}
	\end{equation*}
	In view of the condition (i), we have
	\begin{equation*}\label{equ 5.5}
		\begin{aligned}
			&\exp\big\{2\gamma|Y_\tau|\big\}
			+2\gamma^2\, \dbE_\tau \Big[\int_\tau^T\!\!\! \exp\big\{2\gamma|Y_s|\big\}|Z_s|^2ds\Big]
			\leq \dbE_\tau \big[\exp\big\{2\gamma|\eta|\big\}\big]\\
			&
			+2\gamma\, \dbE_\tau\!\! \int_\t^T\!\!\! \exp\big\{2\gamma|Y_s|\big\}
			\Big[
			\theta_s+\phi(|P_s|)+\frac{\gamma}{2}|Z_s|^2
			+\phi_0\big(\|P_s\|_{L^2(\Om)}\big)+\gamma_0\|Q_s\|_{L^2(\Om)}^{1+\alpha}
			\Big]ds.
		\end{aligned}
	\end{equation*}
	Consequently, we have
	\begin{equation*}\label{equ 7.5}
		\begin{aligned}
			&\g^2\, \dbE_\tau\!\! \int_\tau^T\!\! |Z_s|^2ds
			\les \gamma^2\, \dbE_\tau \!\! \int_\tau^T\!\!\! \exp\big\{2\gamma|Y_s|\big\}|Z_s|^2\, ds\\
			&\leq \exp\big\{2\gamma \|\eta\|_\infty\big\}+2\gamma  \left(\Bigl\|\int_0^T\!\!\! \theta_s\, ds\Bigr\|_\infty
			+(T-t)\widehat{\phi+\phi_0}^t
			+\gamma_0\|Q\|^{1+\alpha}_{\cZ^2_\dbF(t,T)}(T-t)^{\frac{1-\alpha}{2}}
			\right)\\
			&\quad \times \exp\left\{2\gamma\|Y\|_{S^\infty_\dbF(t,T)}\right\}.
		\end{aligned}
	\end{equation*}
	Hence, we have inequality  (\ref{equ 7.3}) and thereby $Z\in\cZ^2_\dbF([0,T];\dbR^d)$. In other words, $Z\cdot W$ is a BMO martingale.

	Finally, we prove the uniqueness. In fact, in view of the condition (ii) with $(U, V) \in S_\dbF^\infty(0,T;\dbR)\ts \cZ^2_\dbF(0,T;\dbR^d)$, similar to  the proof of Hu and Tang \cite[Lemma 2.1]{Hu-Tang-16},  we can use the Girsanov transform to derive a comparison result on the solutions of BSDE \rf{equ 7.1}, which yields the desired result.
\end{proof}

We now prove the existence and uniqueness of mean-field BSDE \rf{MFBSDE} in a subset of $S_\dbF^\infty(T-\e,T;\dbR)\ts \cZ^2_\dbF(T-\e,T;\dbR^d)$, where $\e\in(0,T)$ is some constant. In addition, for positive constants $L_1$ and $L_2$, we define the following Banach space:
$$
\begin{aligned}
	\sB_\e(L_1,L_2)\deq \Big\{ &
	(Y,Z)\in S_\dbF^\infty(T-\e,T;\dbR)\ts \cZ^2_\dbF(T-\e,T;\dbR^d),\\
	& \|Y\|_{S_\dbF^\infty(T-\e,T)}\les L_1\q
	\text{and}\q \|Z\|^2_{\cZ^2_\dbF(T-\e,T)}\les L_2
	\Big\},
\end{aligned}
$$
endowed with the norm
$$
\|(Y,Z)\|_{\sB_\e(L_1,L_2)}=
\Big\{
\|Y\|_{S_\dbF^\infty(T-\e,T)}^2
+\|Z\|^2_{\cZ^2_\dbF(T-\e,T)}
\Big\}^\frac{1}{2}.
$$

The main result of this subsection is stated as follows.

\begin{theorem}\label{th 3.2}\it 
	Under \autoref{ass 3.1}, there exists a positive constant $\e$,  that depends only on the functions  $(\phi(\cd),\phi_0(\cd))$ and constants $(\gamma,\gamma_0,\alpha,K_1,K_2)$, such that on the interval $[T-\e,T]$, the mean-field BSDE (\ref{MFBSDE}) has a unique local  solution $(Y,Z)\in\sB_\e(L_1,L_2)$ with
	\begin{equation*}\label{4.7.2}
		L_1=2(K_1+K_2)\q\hb{and}\q
		L_2=\frac{2}{\gamma^2}e^{2\gamma K_1}+\frac{4K_2}{\gamma}e^{2\gamma L_1}.
	\end{equation*}
\end{theorem}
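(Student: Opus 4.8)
The plan is to build the local solution by a Banach fixed-point argument on the ball $\sB_\e(L_1,L_2)$, decoupling the mean-field interaction through a freezing procedure. Given $(P,Q)\in\sB_\e(L_1,L_2)$, I would freeze the law-arguments and set $\wt g(\o,s,z):=g(\o,s,P_s,z,\dbP_{P_s},\dbP_{Q_s})$, which depends on $z$ alone, and define $\G(P,Q):=(Y,Z)$ to be the adapted solution of
\begin{equation*}
Y_t=\eta+\int_t^T\wt g(s,Z_s)\,ds-\int_t^TZ_s\,dW_s,\q t\in[T-\e,T].
\end{equation*}
The first task is to verify that \autoref{le 7.1} applies to $\wt g$. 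Using $\cW_2(\dbP_{P_s},\d_{\{0\}})=\|P_s\|_{L^2(\Om)}$ and $\cW_2(\dbP_{Q_s},\d_{\{0\}})=\|Q_s\|_{L^2(\Om)}$, \autoref{ass 3.1}(i) collapses exactly to condition~(i) of \autoref{le 7.1}; and \autoref{ass 3.1}(ii) taken with $y=\bar y=P_s$, $\m_1=\bar\m_1=\dbP_{P_s}$, $\m_2=\bar\m_2=\dbP_{Q_s}$ (so the $|y-\bar y|$, $\cW_2(\m_1,\bar\m_1)$ and $\cW_2(\m_2,\bar\m_2)$ terms vanish) collapses to condition~(ii). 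Hence $\G$ is well defined and maps $\sB_\e(L_1,L_2)$ into $S_\dbF^\infty(T-\e,T;\dbR)\ts\cZ^2_\dbF(T-\e,T;\dbR^d)$.

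Second, I would prove the self-mapping property $\G(\sB_\e)\subset\sB_\e$. Applying the a priori bounds \rf{equ 7.2} and \rf{equ 7.3} at $t=T-\e$, with $\|\eta\|_\i\les K_1$, $\|\int_0^T\theta\,ds\|_\i\les K_2$, $\|P\|_{S_\dbF^\infty}\les L_1$, $\|Q\|^2_{\cZ^2_\dbF}\les L_2$, and using monotonicity of $\phi,\phi_0$, one obtains
\begin{equation*}
\|Y\|_{S_\dbF^\infty(T-\e,T)}\les K_1+K_2+\D_\e,\qq \|Z\|^2_{\cZ^2_\dbF(T-\e,T)}\les\frac{1}{\g^2}e^{2\g K_1}+\frac{2}{\g}e^{2\g L_1}\big(K_2+\D_\e\big),
\end{equation*}
where $\D_\e:=\e(\phi+\phi_0)(L_1)+\g_0L_2^{\frac{1+\a}{2}}\e^{\frac{1-\a}{2}}$. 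Since $\a\in[0,1)$, both exponents of $\e$ are positive, so $\D_\e\da0$ as $\e\da0$; choosing $\e$ small enough that $\D_\e\les K_2$ gives $\|Y\|_{S_\dbF^\infty}\les K_1+2K_2\les L_1$ and $\|Z\|^2_{\cZ^2_\dbF}\les\frac{1}{\g^2}e^{2\g K_1}+\frac{4K_2}{\g}e^{2\g L_1}\les L_2$. This is precisely why the radii $L_1,L_2$ are taken as in the statement, which makes the choice transparent.

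Third, and this is the crux, I would show that $\G$ is a contraction for $\e$ small. For $(P,Q),(P',Q')\in\sB_\e$, write $(Y,Z)=\G(P,Q)$, $(Y',Z')=\G(P',Q')$, $\D Y=Y-Y'$, etc. Linearising the $z$-dependence yields $\wt g(s,Z_s)-\wt g'(s,Z'_s)=\b_s\cd\D Z_s+R_s$, where by \autoref{ass 3.1}(ii) one has $|\b_s|\les\phi(L_1)\big(1+|Z_s|+|Z'_s|+2\|Q_s\|_{L^2(\Om)}\big)$, so that $\b\cd W\in BMO(\dbP)$ with norm controlled by $L_1,L_2,\e,\phi$, and the source term satisfies
\begin{equation*}
\begin{aligned}
|R_s|\les{}&\phi(L_1)\Big[\big(1+2|Z'_s|+\|Q_s\|_{L^2(\Om)}+\|Q'_s\|_{L^2(\Om)}\big)\big(|\D P_s|+\|\D P_s\|_{L^2(\Om)}\big)\\
&\quad+\big(1+\|Q_s\|_{L^2(\Om)}^\a+\|Q'_s\|_{L^2(\Om)}^\a\big)\|\D Q_s\|_{L^2(\Om)}\Big],
\end{aligned}
\end{equation*}
using $\cW_2(\dbP_{P_s},\dbP_{P'_s})\les\|\D P_s\|_{L^2(\Om)}$ and $\cW_2(\dbP_{Q_s},\dbP_{Q'_s})\les\|\D Q_s\|_{L^2(\Om)}$. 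Passing to $d\wt\dbP=\sE(\b\cd W)_0^T\,d\dbP$, under which $\wt W=W-\int_0^\cd\b_s\,ds$ is Brownian, eliminates $\b_s\cd\D Z_s$ and gives $\D Y_t=\dbE^{\wt\dbP}_t\big[\int_t^TR_s\,ds\big]$. I would then bound $\dbE^{\wt\dbP}_t\int_t^T|R_s|\,ds$ by Cauchy--Schwarz, absorbing the factors $|Z'_s|,\|Q_s\|_{L^2}$ into $\wt\dbP$-BMO norms (equivalent to the $\dbP$-norms via \autoref{2.1.11}), while the difference factors, through $|\D P_s|\les\|\D P\|_{S_\dbF^\infty}$ and Hölder in time (the $\|Q_s\|^\a$ term needing $\a<1$), contribute small powers $\e^{1/2}$ and $\e^{(1-\a)/2}$. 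This yields $\|\D Y\|_{S_\dbF^\infty}\les C(\e)\big(\|\D P\|_{S_\dbF^\infty}+\|\D Q\|_{\cZ^2_\dbF}\big)$ with $C(\e)\da0$; an It\^{o} computation on $|\D Y|^2$ under $\wt\dbP$, combined again with \autoref{2.1.11} to transfer back to $\dbP$, gives the matching bound for $\|\D Z\|_{\cZ^2_\dbF}$. Hence $\|\G(P,Q)-\G(P',Q')\|_{\sB_\e}\les C(\e)\,\|(P-P',Q-Q')\|_{\sB_\e}$, and shrinking $\e$ forces $C(\e)<1$.

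Finally, the Banach fixed-point theorem yields a unique $(Y,Z)\in\sB_\e(L_1,L_2)$ with $\G(Y,Z)=(Y,Z)$, that is, a solution of \rf{MFBSDE} on $[T-\e,T]$ (since then $P=Y$, $Q=Z$, so the frozen laws $\dbP_{P_s},\dbP_{Q_s}$ become $\dbP_{Y_s},\dbP_{Z_s}$), with the threshold $\e$ depending only on $(\phi,\phi_0,\g,\g_0,\a,K_1,K_2)$. I expect the contraction step to be the main obstacle: one must simultaneously tame the quadratic nonlinearity through the Girsanov/BMO machinery of \autoref{2.1.11} and extract a genuinely small factor from the short horizon, which is delicate for the sub-quadratic law term, where the exponent $\a$ forces a Hölder-in-time argument rather than a naive bound.
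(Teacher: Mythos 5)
Your proposal is correct and follows essentially the same route as the paper's proof: freezing $(P_s,\dbP_{P_s},\dbP_{Q_s})$ to reduce to the quadratic BSDE of \autoref{le 7.1}, establishing the self-mapping property on $\sB_\e(L_1,L_2)$ from the a priori bounds \rf{equ 7.2}--\rf{equ 7.3}, and proving contraction via the linearization $\Lambda_s\Delta Z_s$, a Girsanov change of measure, and the BMO-norm equivalence of \autoref{2.1.11}, with the $\e^{1/2}$ and $\e^{(1-\alpha)/2}$ gains exactly as in the paper. The only cosmetic differences are your cleaner one-shot choice of $\e$ (via $\D_\e\les K_2$ instead of the paper's two implicit equations for $\e_1,\e_2$) and estimating $\Delta Y$ before $\Delta Z$ rather than jointly squaring the difference equation.
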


\begin{proof}
	We construct a contraction mapping to prove the existence and uniqueness of an adapted solution to BSDE~\rf{MFBSDE}.
	For any given pair of processes $(P,Q)\in S_\dbF^\infty(0,T;\dbR)\ts \cZ^2_\dbF(0,T;\dbR^d)$, we consider the following BSDE:
	\begin{equation}\label{equ 3.2}
		Y_t=\eta+\int_t^Tg(s,P_s,Z_s, \dbP_{P_s},\dbP_{Q_s})ds-\int_t^TZ_sdW_s,\q t\in[0,T].
	\end{equation}
	We define the generator $g^{P,Q}$:
	$$g^{P,Q}(t,z)=g(t,P_t,z,\dbP_{P_t},\dbP_{Q_t}),\q (t,z)\in[0,T]\times \mathbb{R}^d.$$
	Under \autoref{ass 3.1}, we can easily verify that the generator $g^{P,Q}$ satisfies  $d\dbP\times dt$-a.e., for every $z,\bar{z}\in \dbR^d$,
	$$|g^{P,Q}(\omega,t,z)|\leq \theta_t(\omega)+\phi(|P_t|)+\frac{\gamma}{2}|z|^2
	+\phi_0\big(\|P_t\|_{L^2(\Om)}\big)+\gamma_0\|Q_t\|_{L^2(\Om)}^{1+\alpha},$$
	and
	$$|g^{P,Q}(\omega,t,z)-g^{P,Q}(\omega,t,\bar{z})|\leq
	\phi\big(|P_t|\vee\|P_t\|_{L^2(\Om)}\big)\cd
	\big(1+|z|+|\bar{z}|+2\|Q_t\|_{L^2(\Om)}\big)|z-\bar{z}|.$$
	Thus according to \autoref{le 7.1}, BSDE (\ref{equ 3.2}) admits a unique solution
	$(Y,Z)\in S_\dbF^\infty(0,T;\dbR)\ts \cZ^2_\dbF(0,T;\dbR^d)$.
	Moreover,  for any $t\in[0,T]$ and stopping time $\tau\in \sT[t,T]$, we have
	\begin{equation}\label{equ 3.3}
		\begin{aligned}
			|Y_t|&\leq \|\eta\|_\infty+\Bigl\|\int_0^T\theta_s\, ds\Bigr\|_\infty
			+(T-t)\widehat{\phi+\phi_0}^t
			+\gamma_0\|Q\|^{1+\alpha}_{\cZ^2_\dbF(t,T)}(T-t)^{\frac{1-\alpha}{2}}
		\end{aligned}
	\end{equation}
	and
	\begin{equation}\label{equ 3.4}
		\begin{aligned}
			\dbE_\tau\int_\tau^T|Z_s|^2ds
			\leq&\,  \frac{1}{\gamma^2}\exp\left(2\gamma\|\eta\|_\infty\right)
			+\frac{2}{\gamma}\exp\left(2\gamma\|Y\|_{S_\dbF^\infty(t,T)}\right)\\
			&\times \left(\ \Bigl\|\int_0^T\!\!\! \theta_s\, ds\Bigr\|_\infty
			+(T-t)\widehat{\phi+\phi_0}^t
			+\gamma_0\|Q\|^{1+\alpha}_{\cZ^2_\dbF(t,T)}(T-t)^{\frac{1-\alpha}{2}}\right).
		\end{aligned}
	\end{equation}
	Then, we define the mapping $\Theta$ on
	$S_\dbF^\infty(0,T;\dbR)\ts \cZ^2_\dbF(0,T;\dbR^d)$ as follows:
	$$\Theta(P,Q)\deq (Y,Z),\q  (P,Q)\in S_\dbF^\infty(0,T;\dbR)\ts \cZ^2_\dbF(0,T;\dbR^d).$$

	Next, we  find a suitable subset $\sB_\e(L_1,L_2)$ of $S_\dbF^\infty(0,T;\dbR)\ts \cZ^2_\dbF(0,T;\dbR^d)$ such that $\Theta$ is stable and contractive on it.
	%%%%%%%
	For this, note  \rf{equ 3.3} and \rf{equ 3.4} and thanks to \autoref{ass 3.1},  we have
	\begin{equation}\label{equ 3.5}
		\begin{aligned}
			\|Y\|_{S^\infty_{\dbF}(t,T)}\leq&\ K_1+K_2
			+(T-t)\widehat{\phi+\phi_0}^t
			+\gamma_0\|Q\|^{1+\alpha}_{\cZ^2_\dbF(t,T)}(T-t)^{\frac{1-\alpha}{2}}
		\end{aligned}
	\end{equation}
	and
	\begin{equation}\label{equ 3.6}
		\begin{aligned}
			\|Z\|_{\cZ^2_\dbF(t,T)}^2
			\leq&\frac{1}{\gamma^2}\exp\big(2\gamma\|\eta\|_\infty\big)
			+\frac{2}{\gamma}\exp\big(2\gamma\|Y\|_{S_\dbF^\infty(t,T)}\big)\\
			&\times \left(\ K_2 +(T-t)\widehat{\phi+\phi_0}^t
			+\gamma_0\|Q\|^{1+\alpha}_{\cZ^2_\dbF(t,T)}(T-t)^{\frac{1-\alpha}{2}}\right).
		\end{aligned}
	\end{equation}
	Now, we define
	\begin{equation}\label{equ 3.6.2}
		L_1=2(K_1+K_2)\q\hbox{and}\q
		L_2=\frac{2}{\gamma^2}e^{2\gamma K_1}+\frac{4K_2}{\gamma}e^{2\gamma L_1}.
	\end{equation}
	By $\e_1$ and $\e_2$,  we denote the unique solutions to the following two equations:
	\begin{equation*}\label{equ 3.7}
		[\phi(L_1)+\phi_0(L_1)]x+\gamma_0L_2^{\frac{1+\alpha}{2}}x^{\frac{1-\alpha}{2}}=\frac{L_1}{2}
	\end{equation*}
	and
	\begin{equation}\label{equ 3.8}
		[2\phi(L_1)+2\phi_0(L_1)]y+2\gamma_0L_2^{\frac{1+\alpha}{2}}y^{\frac{1-\alpha}{2}}=\frac{\gamma L_2}{2}e^{-2\gamma L_1},
	\end{equation}
	respectively. Hence,  combining (\ref{equ 3.5})-(\ref{equ 3.8}), we see that if
	$$\|P\|_{S_\dbF^\infty(T-\e,T)}\leq L_1\q\hb{and}\q \|Q\|^2_{ \cZ^2_\dbF(T-\e,T)}\leq L_2,$$
	then
	$$\|Y\|_{S_\dbF^\infty(T-\e,T)}\leq L_1\q\hb{and}\q \|Z\|^2_{ \cZ^2_\dbF(T-\e,T)}\leq L_2, \q \forall \e\in(0,\e^*],$$
	where $\e^*=\min\{\e_1,\e_2\}>0$.  Since $\e^*$ depends only on $K_1,K_2, \a, \g$ and $\g_0$, we have 
	$$\Theta(P,Q)\in \sB_\e(L_1,L_2), \q \forall (P,Q)\in\sB_\e(L_1,L_2).$$
	Therefore, the mapping $\Theta$ is stable in $\sB_\e(L_1,L_2).$
	
	We now prove that the mapping $\Theta$ is a contraction in $\sB_\e(L_1,L_2).$
	Indeed, for any fixed $\e\in(0,\e^*]$, and any pairs $(P,Q)\in \sB_\e(L_1,L_2)$ and $(\bar P,\bar Q)\in \sB_\e(L_1,L_2)$, we set
	$$
	(Y,Z):=\Theta(P,Q)\quad\hb{and}\q (\bar{Y},\bar{Z}):=\Theta(\bar{P},\bar{Q}).
	$$
	In addition, denote for any $t\in[T-\e,T]$,
	$$\Delta Y_t:=Y_t-\bar{Y}_t,\q  \Delta Z_t:=Z_t-\bar{Z}_t,\q \Delta P_t:=P_t-\bar{P}_t, \q \Delta Q_t:=Q_t-\bar{Q}_t.$$
	Then, we have
	\begin{equation}\label{equ 3.9}
		\Delta Y_t=\int_t^T
		\big(I_{1,s}+I_{2,s}\big)ds-\int_t^T\Delta Z_sdW_s,\q  t\in[T-\e,T],
	\end{equation}
	where
	\begin{equation*}\label{equ 3.11}
		\begin{aligned}
			I_{1,s}&:= g(s,P_s,Z_s, \dbP_{P_s},\dbP_{Q_s})-g(s,P_s,\bar{Z}_s, \dbP_{P_s},\dbP_{Q_s}),\\
			I_{2,s}&:= g(s,P_s,\bar{Z}_s, \dbP_{P_s},\dbP_{Q_s})-g(s,\bar{P}_s,\bar{Z}_s, \dbP_{\bar{P}_s},\dbP_{\bar{Q}_s}),
		\end{aligned}
	\end{equation*}
	and thus
	\begin{equation*}\label{equ 3.10}
		I_{1,s}+I_{2,s}=g(s,P_s,Z_s, \dbP_{P_s},\dbP_{Q_s})-g(s,\bar{P}_s,\bar{Z}_s, \dbP_{\bar{P}_s},\dbP_{\bar{Q}_s}).
	\end{equation*}
	Note that the term $I_{1,s}$ can be written as $I_{1,s}=\Lambda_s(Z_s-\bar{Z}_s),$ where
	$$\Lambda_s:=\left\{\ba{ll}
	\frac{(g(s,P_s,Z_s, \dbP_{P_s},\dbP_{Q_s})
		-g(s,P_s,\bar{Z}_s, \dbP_{P_s},\dbP_{Q_s}))(Z_s-\bar{Z}_s)
	}{|Z_s-\bar{Z}_s|^2},&\quad \text{if}\ Z_s\neq\bar{Z}_s;\\
	0                ,&\quad \text{if}\ Z_s=\bar{Z}_s.
	\ea\right.$$
	Clearly,  the item (ii) of \autoref{ass 3.1} implies that
	$$|\Lambda_s|\leq \phi(|P_s|\vee \|P_s\|_{L^2(\Om)})(1+|Z_s|
	+|\bar{Z}_s|+2\|Q_s\|_{L^2(\Om)}),\q\forall s\in[t,T].$$
	Then, for
	\begin{equation}\label{equ girsanov}
		\widetilde{W}_s:=W_s-\int_0^s\Lambda_rdr\q\hb{and}\q
		d\dbQ:=\sE(\L\cd W)_0^Td\dbP,
	\end{equation}
	%
	%with
	%%
	%$$\sE(\L\cd W)_0^T\deq\exp\left\{\int_0^T\Lambda_sdW_s-\frac{1}{2}\int_0^T\Lambda_s^2ds\right\},$$
	%
	the probability measure $\dbQ$ is equivalent to $\mathbb{P}$ and the process  $\widetilde{W}$ is a Brownian motion under $\dbQ$.
	Furthermore, we can rewrite BSDE (\ref{equ 3.9})  as
	\begin{equation}\label{equ 3.12}
		\begin{aligned}
			\Delta Y_t+\int_t^T\Delta Z_sd\widetilde{W}_s =\int_t^TI_{2,s}ds,\q \forall t\in[T-\e,T],
		\end{aligned}
	\end{equation}
	which easily implies that
	$$
	|\Delta Y_t|^2+\dbE_t^\dbQ\int_t^T|\Delta Z_s|^2ds=\dbE_t^\dbQ\bigg[\Big(\int_t^TI_{2,s}ds\Big)^2\bigg].
	$$
	Since 
	$\cW_2(\dbP_{P_s},\d_{\{0\}})\leq \{\dbE[|P_s|^2]\}^\frac{1}{2}= \|P_s\|_{L^2(\Om)}$,
	we have the following estimates  from the item (ii) of \autoref{ass 3.1},
	\begin{align*}
		|I_{2,s}|\leq&\
		\phi\Big(|P_s|\vee|\bar{P}_s|\vee\cW_2(\dbP_{P_s},\d_{\{0\}})\vee
		\cW_2(\dbP_{\bar{P}_s},\d_{\{0\}})\Big)\\
		&\  \times\bigg[\Big(1+2|\bar{Z}_s|+\cW_2(\dbP_{Q_s},\d_{\{0\}})
		+\cW_2(\dbP_{\bar{Q}_s},\d_{\{0\}})\Big)
		\left(|\Delta P_s|+\cW_2(\dbP_{P_s},\dbP_{\bar{P}_s})\right)\\
		&\q\ +\Big(1+\cW_2(\dbP_{Q_s},\d_{\{0\}})^{\alpha}
		+\cW_2(\dbP_{\bar{Q}_s},\d_{\{0\}})^{\alpha}\Big)
		\cW_2(\dbP_{Q_s},\dbP_{\bar{Q}_s})
		\bigg]\\
		\leq&\
		\phi\Big(|P_s|\vee|\bar{P}_s|\vee\|P_s\|_{L^2(\Om)}\vee \|\bar{P}_s\|_{L^2(\Om)}\Big)\\
		&\quad\times
		\bigg[\Big(1+2|\bar{Z}_s|+
		\|Q_s\|_{L^2(\Om)}+ \|\bar{Q}_s\|_{L^2(\Om)}\Big) (|\Delta P_s|+ \|\Delta P_s\|_{L^2(\Om)})\\
		&\q\ +\Big(1+\|Q_s\|^{\alpha}_{L^2(\Om)}+ \|\bar{Q}_s\|^{\alpha}_{L^2(\Om)}\Big)
		\|\Delta Q_s\|_{L^2(\Om)}\bigg]\\
		\leq&\
		\phi\Big(\|P\|_{S_\dbF^\infty(t,T)}\vee \|\bar{P}\|_{S_\dbF^\infty(t,T)}\)
		\bigg[2\Big(1+2|\bar{Z}_s|+
		\|Q_s\|_{L^2(\Om)}+ \|\bar{Q}_s\|_{L^2(\Om)}\Big)\\
		&\quad \times \|\Delta P\|_{S_\dbF^\infty(t,T)}
		+\Big(1+\|Q_s\|^{\alpha}_{L^2(\Om)}+ \|\bar{Q}_s\|^{\alpha}_{L^2(\Om)}\Big)
		\|\Delta Q_s\|_{L^2(\Om)}\bigg]\\
		\leq& \phi\big(L_1\big)
		\bigg[2\Big(1+2|\bar{Z}_s|+\|Q_s\|_{L^2(\Om)}+ \|\bar{Q}_s\|_{L^2(\Om)}\Big ) \|\Delta P\|_{S_\dbF^\infty(t,T)} \\
		&\quad\quad
		+\Big(1+\|Q_s\|_{L^2(\Om)}^{\alpha}+\|\bar{Q}_s\|_{L^2(\Om)}^{\alpha}\Big)
		\|\Delta Q_s\|_{L^2(\Om)}\bigg].
	\end{align*}
	Thus we have
	%
	%\begin{equation}
	\begin{align}\label{equ 3.14}
		&|\Delta Y_t|^2+\dbE_t^\dbQ\Big[\int_t^T|\Delta Z_s|^2ds\Big] \nn \\
		&\leq\dbE_t^\dbQ\bigg\{\bigg[\int_t^T
		2\phi\big(L_1\big)
		\Big(1+2|\bar{Z}_s|+\|Q_s\|_{L^2(\Om)}+ \|\bar{Q}_s\|_{L^2(\Om)}\Big ) \|\Delta P\|_{S_\dbF^\infty(t,T)}\nn \\
		&\quad+\phi\big(L_1\big)\Big(1+\|Q_s\|_{L^2(\Om)}^{\alpha}+\|\bar{Q}_s\|_{L^2(\Om)}^{\alpha}\Big)
		\|\Delta Q_s\|_{L^2(\Om)}
		ds\bigg]^2\bigg\}\\
		&\leq
		8\phi\big(L_1\big)^2
		\dbE_t^\dbQ\bigg[
		\Big(\int_t^T(1+2|\bar{Z}_s|+\|Q_s\|_{L^2(\Om)}+\|\bar{Q}_s\|_{L^2(\Om)})\|\Delta P\|_{ S_\dbF^\infty(t,T)}ds\Big)^2
		\bigg]\nn \\
		&\quad+2\phi\big(L_1\big)^2
		\Big(\int_t^T(1+\|Q_s\|_{L^2(\Om)}^{\alpha}+\|\bar{Q}_s\|_{L^2(\Om)}^{\alpha})\|\Delta Q_s\|_{L^2(\Om)}ds\Big)^2. \nn
	\end{align}
	%\end{equation}
	%
	We now estimate the first term in the right hand side of the last inequality. By using H\"{o}lder's inequality and \autoref{2.1.11}, and noting \rf{4.3.1}, we have for  $t\in[T-\e,T]$,
	%
	%\begin{align*}\label{equ 3.17}
	\begin{align*}
		&8\phi\big(L_1\big)^2
		\dbE_t^\dbQ\bigg[
		\Big(\int_t^T(1+2|\bar{Z}_s|+\|Q_s\|_{L^2(\Om)}+\|\bar{Q}_s\|_{L^2(\Om)})\|\Delta P\|_{ S_\dbF^\infty(t,T)}ds\Big)^2
		\bigg]\\
		&\leq
		32\e\phi\big(L_1\big)^2\|\Delta P\|^2_{ S_\dbF^\infty(t,T)}
		\Big(T+4\dbE_t^\dbQ\Big[\int_t^T|\bar{Z}_s|^2ds\Big]
		+\int_t^T\dbE^{\dbP}|Q_s|^2ds+\int_t^T\dbE^{\dbP}|\bar{Q}_s|^2ds\Big)\\
		&\leq
		32\e\phi\big(L_1\big)^2\|\Delta P\|^2_{S_\dbF^\infty(t,T)}
		\Big(
		T+4\|\bar{Z}\cd \widetilde W\|_{BMO(\dbQ)}^2+\|Q\cd W\|_{BMO(\dbP)}^2+\|\bar{Q}\cd W\|_{BMO(\dbP)}^2\Big)\\
		&\leq
		32\e\phi\big(L_1\big)^2
		\Big(T+4c_2L_2+2L_2\Big)\|\Delta P\|^2_{S_\dbF^\infty(t,T)}.
	\end{align*}
	Since $\a\in[0,1)$,  using H\"{o}lder's inequality and noting \rf{4.3.1} again, we see that for $t\in[T-\e,T]$,
	$$
	\begin{aligned}
		\int_t^T\|Q_s\|_{L^2 (\Om)}^{2\alpha}ds
		&=\int_t^T\big\{\dbE^{\dbP}|Q_s|^2\big\}^{\alpha}ds
		\leq \Big(\int_t^T \dbE^{\dbP}|Q_s|^2ds  \Big)^\alpha(T-t)^{1-\alpha} \\
		&\leq \|Q\|^{2\alpha}_{\cZ^2_\dbF(t,T)}(T-t)^{1-\alpha}
		\leq L_2^{\alpha}(T-t)^{1-\alpha}
		\leq L_2(T-t)^{1-\alpha}.
	\end{aligned}
	$$
	In the above,  $L_2$ can be chosen to be greater than 1, by  a careful choice of  $\g$, $K_1$ or $K_2$ in \rf{equ 3.6.2}.
	Consequently, using H\"{o}lder's inequality, we estimate  the second term in the right hand side of \rf{equ 3.14} as follows:
	\begin{equation}\label{equ 3.16}
		\begin{aligned}
			&2\phi(L_1)^2
			\Big(\int_t^T(1+\|Q_s\|_{L^2(\Om)}^{\alpha}+\|\bar{Q}_s\|_{L^2(\Om)}^{\alpha})\|\Delta Q_s\|_{L^2(\Om)}ds\Big)^2\\
			&\leq2\phi(L_1)^2\int_t^T||\Delta Q_s||^2_{L^2(\Omega)}ds\cdot
			\int_t^T\Big(1+\|Q_s\|_{L^2(\Om)}^{\alpha}+\|\bar{Q}_s\|_{L^2(\Om)}^{\alpha}\Big)^2ds\\
			&\leq6\phi(L_1)^2||\Delta Q||^2_{\cZ^2_\dbF(t,T)}
			\big[T-t+2L_2(T-t)^{1-\alpha}\big]\\
			&\leq
			6\e^{1-\alpha}\phi(L_1)^2(T^\alpha+2L_2)\|\Delta Q\|^2_{\cZ^2_\dbF(t,T)},
		\end{aligned}
	\end{equation}
	where in the last inequality we have used the inequality $\e\les \e^{1-\a}T^\a$.
	Now from (\ref{equ 3.14}) and (\ref{equ 3.16}),  we have
	\begin{align}\label{equ 3.16-1}
		|\Delta Y_t|^2+\dbE_t^\dbQ\int_t^T|\Delta Z_s|^2ds
		\leq&\
		32\e\phi\big(L_1\big)^2
		\big(T+4c_2L_2+2L_2\big)\|\Delta P\|^2_{S_\dbF^\infty(t,T)} \\
		&\
		+6\e^{1-\alpha}\phi\big(L_1\big)^2(T^\alpha+2L_2)
		\|\Delta Q\|^2_{\cZ^2_\dbF(t,T)}. \nn
	\end{align}
	Moreover, thanks to \autoref{2.1.11}, and noting that $\a\in[0,1)$ and $t\in[T-\e,T]$, we have
	\begin{equation*}\label{equ 3.18}
		\begin{aligned}
			\|\Delta Y\|^2_{S_\dbF^\infty(T-\e,T)}+c_1^2\|\Delta Z\|^2_{\cZ^2_\dbF(T-\e,T)}
			&\leq
			32\e\phi\big(L_1\big)^2
			\big(T+4c_2L_2+2L_2\big)\|\Delta P\|^2_{S_\dbF^\infty(T-\e,T)}\\
			&\q +6\e^{1-\alpha}\phi\big(L_1\big)^2(T^\alpha+2L_2)
			\|\Delta Q\|^2_{\cZ^2_\dbF(T-\e,T)}.
		\end{aligned}
	\end{equation*}
	From this, we know that
	$\Theta$ is a contraction in $\sB_\e(L_1,L_2)$ for sufficiently small $\e>0$. The proof is complete.
\end{proof}

\subsection{Global solution}\label{Sec3.2}

Based on the previous result concerning the local solution of mean-field BSDE \rf{MFBSDE}, in this subsection, we are going to study the global adapted solution of mean-field BSDE \rf{MFBSDE} in two situations: (i) the case that the generator is bounded in $\mu_2$; (ii) the case that  the generator is unbounded in $\mu_2$.  We point out that these two situations do not cover each other.

\subsubsection{Global solution:  the generator $g$ has a  bounded growth  in $\mu_2$} 

Consider  the following assumption,  which is  slightly stronger than \autoref{ass 3.1}.

\bas{ass 3.2}\rm  The terminal value $\eta:\Om\rightarrow\dbR$ and the generator $g: \Omega\times[0,T]\times\dbR\times\dbR^d\times \cP_2(\dbR)\ts\cP_2(\dbR^{d})\rightarrow \dbR$ satisfy the following conditions: there exists a positive constant $K$ such that
\begin{enumerate}[~~\,\rm (i)]
	\item $d\dbP\times dt$-a.e. $(\o,t)\in\Om\ts[0,T]$, for every $(y, z, \mu_1, \mu_2)\in\dbR\times\dbR^d\times \cP_2(\dbR)\ts \cP_2(\dbR^d)$,
	\begin{equation}\label{22.5.18.1}
		|g(w,t,y,z,\mu_1,\mu_2)|\les \theta_t(\omega)+K|y|+\frac{\gamma}{2}|z|^2+K\cW_2(\mu_1,\d_{\{0\}}).
	\end{equation}
	\item $d\dbP\times dt$-a.e. $(\o,t)\in\Om\ts[0,T]$, for every $(y, z; \by,\bz;\mu_1, \mu_2; \bar{\m}_1, \bar{\m}_2)\in \left(\dbR\times\dbR^d\right)^2\times \left(\cP_2(\dbR)\ts\cP_2(\dbR^{d})\right)^2$,
	$$\ba{ll}
	& |g(\o,t,y,z,\m_1,\mu_2)-g(\o,t,\bar y,\bz,\bar\mu_1,\bar\mu_2)| \les K\left(|y-\bar{y}|+\cW_2(\mu_1,\bar{\mu}_1)\right)\\
	&\q + \phi\big(|y|\vee|\bar y|\vee\cW_2(\mu_1,\d_{\{0\}})\vee\cW_2(\bar \mu_1,\d_{\{0\}})\big)  \big[(1+|z|+|\bar{z}|)|z-\bar{z}|+\cW_2(\mu_2,\bar{\mu}_2)\big].
	\ea$$
	\item  For two positive constants $K_1$ and $K_3$,
	$$\|\eta\|_\infty\les K_1\q\hb{and}\q \bigg\|\int_0^T|\theta_t(\omega)|^2dt\bigg\|_\infty\les K_3.$$
\end{enumerate}\eas

\begin{remark}\rm
	On the one hand,  by using H\"{o}lder's inequality,  we have
	\begin{equation*}\label{22.9.15}
		\bigg\|\int_0^T|\theta_t(\omega)|dt\bigg\|_\infty
		\les \bigg\| \sqrt{T}  \bigg(\int_0^T|\theta_t(\omega)|^2dt\bigg)^{\frac{1}{2}}\bigg\|_\infty
		\les \sqrt{TK_3}=K_2.
	\end{equation*}
	On the other hand,  \autoref{ass 3.2} is weaker than the conditions $(\sA2)$ and $(\sA3)$ of Hibon, Hu, and Tang \cite{Hibon-Hu-Tang-17}. For example, the following generator $g$:
	\begin{align*}
		g(\omega,t,y,z,\mu_1,\m_2)=|y|+|z|+|z|^2+\mathcal{W}_2(\mu_1,\delta_{\{0\}})\sin(\mathcal{W}_2(\mu_2,\delta_{\{0\}})),
	\end{align*}
	for every $(t, y, z)\in[0,T]\times \dbR\times \dbR^d, \mu_1\in\mathcal{P}_2(\mathbb{R}),\mu_2\in\mathcal{P}_2(\mathbb{R}^{d})$
	satisfies \autoref{ass 3.2}, but it does not satisfy the conditions of \cite{Hibon-Hu-Tang-17}.
\end{remark}

In this subsection, the main result is the following theorem on the global adapted solution of mean-field BSDE \rf{MFBSDE}.
\begin{theorem}\label{th 3.5}\it
	Under \autoref{ass 3.2}, on the whole interval $[0,T]$, BSDE (\ref{MFBSDE}) possesses a unique global solution $(Y,Z)\in S_\dbF^\infty(0,T;\dbR)\ts \cZ^2_\dbF(0,T;\dbR^d)$.
	Furthermore, there exist two positive constants $M_1$ and $M_2$, that depend only on 
	$(K,K_1,K_3,\g,T)$, such that
	\begin{equation}\label{4.7.0}
		\|Y\|_{S_\dbF^\infty(0,T)}\leq  M_1  \q~
		\text{and}\q~  \|Z\|^2_{ \cZ^2_\dbF(0,T)}\leq M_2.
	\end{equation}
\end{theorem}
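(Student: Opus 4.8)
The plan is to combine uniform a priori estimates with a continuation (patching) argument built on the local existence result \autoref{th 3.2}. The decisive feature of \autoref{ass 3.2}, as opposed to \autoref{ass 3.1}, is that the growth of $g$ in $y$ and in $\m_1$ is only \emph{linear} (with the common constant $K$), while $g$ carries \emph{no} growth in $\m_2$ at all. This is exactly what will let me produce bounds on $\|Y\|_{S_\dbF^\infty(0,T)}$ and $\|Z\|_{\cZ^2_\dbF(0,T)}$ that do not deteriorate as the horizon grows, and hence extend the local solution over all of $[0,T]$ in finitely many steps of a fixed length.

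First I would derive a uniform bound on $\|Y\|_{S_\dbF^\infty(0,T)}$. For any bounded solution, condition (i) of \autoref{ass 3.2} gives $|g(s,Y_s,Z_s,\dbP_{Y_s},\dbP_{Z_s})|\les\theta_s+K|Y_s|+\frac{\g}{2}|Z_s|^2+Km_s$, where $m_s\deq\cW_2(\dbP_{Y_s},\d_{\{0\}})=(\dbE|Y_s|^2)^{1/2}$ is a \emph{deterministic} function of $s$. Applying It\^{o}'s formula to $e^{\pm\g Y_t}$, the second-order correction term absorbs the quadratic term $\frac{\g}{2}|Z|^2$, and taking conditional expectations (legitimate once $Z\cd W\in BMO$, confirmed below) yields the standard Kobylanski-type inequality $|Y_t|\les e^{K(T-t)}\big(\|\eta\|_\infty+\int_t^T\theta_s\,ds\big)+Ke^{K(T-t)}\int_t^T m_s\,ds$, in which the factors of $\g$ cancel. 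Writing $\psi(t)\deq\|Y\|_{S_\dbF^\infty(t,T)}$ and using $m_s\les\psi(s)$, taking the supremum over $[t,T]$ produces the backward integral inequality $\psi(t)\les e^{KT}(K_1+K_2)+Ke^{KT}\int_t^T\psi(s)\,ds$, whence Gronwall's lemma gives $\psi(0)\les M_1$ for an explicit constant $M_1$ depending only on $(K,K_1,K_2,T)$ with $K_2=\sqrt{TK_3}$. I expect this a priori bound to be the main obstacle: the mean-field term is nonlocal in time, so the naive estimate would leave a coefficient $KTe^{KT}$ in front of $\|Y\|_{S_\dbF^\infty}$ that need not be $<1$; the resolution is precisely to regard $m_s$ as a time function and close the estimate by Gronwall rather than by absorption.

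Next, with $\|Y\|_{S_\dbF^\infty(0,T)}\les M_1$ in hand, I would bound $Z$ exactly as in the proof of \autoref{le 7.1}: applying It\^{o}--Tanaka to $e^{2\g|Y_t|}$ and using condition (i) of \autoref{ass 3.2} together with the bound on $m_s$ gives $\dbE_\tau\int_\tau^T|Z_s|^2ds\les M_2$ for every stopping time $\tau$, i.e. $\|Z\cd W\|_{BMO(\dbP)}^2=\|Z\|_{\cZ^2_\dbF(0,T)}^2\les M_2$, with $M_2$ depending only on $(K,K_1,K_3,\g,T)$; this is routine and simultaneously justifies the martingale property used above. To construct the global solution I would then iterate \autoref{th 3.2}, which provides a bounded solution on $[T-\e,T]$ whose step length $\e$ depends only on the structural constants (here $\phi(r)=\phi_0(r)=Kr$ and $\g_0=0$) and on the terminal bound. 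Because the a priori estimate applies to any restriction of a solution to a sub-interval ending at $T$, each gluing value $Y_{T-k\e}$ satisfies $|Y_{T-k\e}|\les M_1$, so the \emph{same} step length $\e$ (computed with the uniform radii $M_1,M_2$) can be used at every stage; after $\lceil T/\e\rceil$ steps the construction reaches $0$, and the glued pair solves \rf{MFBSDE} on $[0,T]$ and obeys \rf{4.7.0}.

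Finally, for uniqueness I would patch the local uniqueness backwards. Given two bounded solutions, the a priori bounds place both restrictions to $[T-\e,T]$ inside a ball $\sB_\e(M_1,M_2)$ on which the mapping $\Theta$ of \autoref{th 3.2} is a contraction for $\e$ small: using \autoref{2.1.11} and \rf{4.3.1} to linearise the quadratic term through a Girsanov change of measure, the $\m_1$- and $\m_2$-differences being controlled by the Lipschitz bound (ii) via $\cW_2(\dbP_{Y_s},\dbP_{Y'_s})\les\|\D Y_s\|_{L^2(\Om)}$ and $\cW_2(\dbP_{Z_s},\dbP_{Z'_s})\les\|\D Z_s\|_{L^2(\Om)}$. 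Hence the two solutions coincide on $[T-\e,T]$, so their values at $T-\e$ agree, and repeating the argument on successive intervals of length $\e$ shows they coincide on all of $[0,T]$.
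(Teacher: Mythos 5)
Your proposal is correct, and its overall architecture --- uniform a priori bounds, continuation by iterating \autoref{th 3.2} with a step length computed from those uniform bounds, and uniqueness patched backward interval by interval via the contraction estimate --- is exactly the paper's. The one genuine difference is how the a priori bound on $\|Y\|_{S_\dbF^\infty(0,T)}$ is produced. You use the Kobylanski/Briand--Hu exponential transform: It\^o--Tanaka applied to an exponential of $|Y|$ absorbs the term $\frac{\gamma}{2}|Z|^2$ through the second-order correction, the mean-field quantity $m_s=\|Y_s\|_{L^2(\Om)}$ is treated as a deterministic free term, and the estimate is closed by backward Gronwall --- so only the growth condition (i) of \autoref{ass 3.2} is used at this stage. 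The paper instead removes the quadratic increment by a Girsanov change of measure, writing $g(s,0,Z_s,\d_{\{0\}},\dbP_{Z_s})-g(s,0,0,\d_{\{0\}},\dbP_{Z_s})=\Lambda_s Z_s$ with $|\Lambda_s|\les\phi(0)(1+|Z_s|)$ (which leans on the local Lipschitz condition (ii)), then applies It\^o's formula to $|Y|^2$ under the new measure and compares $\|Y\|^2_{S_\dbF^\infty(t,T)}$ with the solution $\Gamma$ of a linear ODE; your Gronwall function $\psi$ plays the role of the paper's $\Gamma$. Both routes give a bound depending only on the admissible constants and then feed the identical continuation/uniqueness machinery, so the difference is one of technique for the key estimate rather than of strategy. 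One small remark: your parenthetical that taking conditional expectations is ``legitimate once $Z\cd W\in BMO$'' is more caution than is needed and creates no circularity anyway --- since the exponent in your transform involves only the bounded process $Y$ and deterministic integrals, the integrand of the stochastic integral is (bounded)$\times Z$, so $Z\in L^2_\dbF$ already makes it a true martingale; moreover, any solution in the class $S_\dbF^\infty\times\cZ^2_\dbF$ has $Z\cd W\in BMO(\dbP)$ by definition of $\cZ^2_\dbF$.
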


\begin{proof}
	We will initially analyze the solvability of  BSDE \eqref{MFBSDE} on the interval $[T-\kappa_{\l},T]$, where $\kappa_{\l}$ is a positive constant that will be determined later.
	For this purpose, we define
	\begin{equation*}\label{22.5.17.1}
		\overline{C}=K_1^2+K_3+K^2+2K+2.
	\end{equation*}
	Moreover, we let the function $\Gamma$ be the unique solution of the following ordinary differential equation
	\begin{equation*}
		\Gamma(t)=\overline{C}+\overline{C}\int_t^T\Gamma(s)ds,\q  t\in[0,T].
	\end{equation*}
	In addition, we define
	$$\l=\sup\limits_{t\in[0,T]}\Gamma(t)=\Gamma(0).$$
	Then, the function $\Gamma$ is continuous and decreasing.

	Now, due to that $\|\eta\|^2_\infty\leq \overline{C}=\G(T),$ \autoref{th 3.2} implies that there exists a positive constant $\kappa_{\l}$, depending only on $\l$, such that BSDE (\ref{MFBSDE}) possesses a unique local solution
	$(Y^{(1)},Z^{(1)})$ on the interval $[T-\kappa_{\l},T]$.
	Next, we show   that  $$\|Y^{(1)}\|^2_{S^\i_{\dbF}(t,T)}\les \G(t),\q t\in[T-\kappa_{\l},T].$$ 
	In fact, for $t\in [T-\kappa_{\l},T]$,
	\begin{equation*}
		\begin{aligned}
			Y^{(1)}_t &=\eta+\int_t^T\(g(s,Y^{(1)}_s,Z^{(1)}_s,\dbP_{Y^{(1)}_s},\dbP_{Z^{(1)}_s})-g(s,0,Z^{(1)}_s,0,\dbP_{Z^{(1)}_s})\\
			&\q + g(s,0,Z^{(1)}_s,0,\dbP_{Z^{(1)}_s})-g(s,0,0,0,\dbP_{Z^{(1)}_s})+g(s,0,0,0,\dbP_{Z^{(1)}_s})\)ds-\int_{t}^{T}Z^{(1)}_sdW_s.
		\end{aligned}
	\end{equation*}
	Note that,  by assumptions, 
	\begin{equation*} 
		\left\{
		\begin{aligned}
			&g(s,0,Z^{(1)}_s,0,\dbP_{Z^{(1)}_s})-g(s,0,0,0,\dbP_{Z^{(1)}_s})=\Lambda_sZ^{(1)}_s;\\
			&|\Lambda_s|\leq \phi(0)(1+|Z^{(1)}_s|).
		\end{aligned}
		\right.
	\end{equation*}
	Then the process 
	\begin{equation*}\label{22.5.13.411}
		\widetilde{W}_t\deq W_t-\int_0^t\Lambda_sds, \quad t\in [0,T],
	\end{equation*}
	is a Brownian motion with respect to the equivalent probability measure $\widetilde{\dbP}$ defined by
	$$d\widetilde{\mathbb{P}} \deq\sE(\Lambda \cdot W)_0^Td\mathbb{P},$$
	with respect to which the conditional expectation is denoted by $\widetilde{\dbE}_r[\cd]$.
	Now, applying It\^{o}'s formula to $|Y^{(1)}|^2$ and using \autoref{ass 3.2},   we have that for any $r\in[T-\kappa_\l,t]$,
	\begin{align*}
		&\widetilde{\dbE}_r|Y^{(1)}_t|^2+\widetilde{\dbE}_r\int_t^T|Z^{(1)}_s|^2ds\\
		&=\widetilde{\dbE}_r|\eta|^2
		+\widetilde{\dbE}_r\int_t^T2Y^{(1)}_s \(g(s,Y^{(1)}_s,Z^{(1)}_s,\dbP_{Y^{(1)}_s},\dbP_{Z^{(1)}_s})-g(s,0,Z^{(1)}_s,0,\dbP_{Z^{(1)}_s})\\
		&\q    +g(s,0,0,0,\dbP_{Z^{(1)}_s})\)      ds\\
		&
		\leq \widetilde{\dbE}_r|\eta|^2
		+\widetilde{\dbE}_r\int_t^T2|Y^{(1)}_s| \(K|Y^{(1)}_s|+KW_2(\dbP_{Y^{(1)}_s},\d_{\{0\}})+\th_s(\o)    \)  ds\\
		&\leq K_1^2+K_3+\widetilde{\dbE}_r\int_t^T((K^2+2K+1)|Y^{(1)}_s|^2 +\dbE|Y^{(1)}_s|^2)ds\\
		&\leq K_1^2+K_3+ \int_t^T\((K^2+2K+2) \|Y^{(1)}\|^2_{S^\i_{\dbF}(s,T)}\)ds,
	\end{align*} 
	which implies that
	\begin{align*}
		&\|Y^{(1)}\|^2_{S^\i_{\dbF}(t,T)}\leq K_1^2+K_3+ \int_t^T(K^2+2K+2) \|Y^{(1)}\|^2_{S^\i_{\dbF}(s,T)}ds.
	\end{align*} 
	Hence,    $\|Y^{(1)}\|^2_{S^\i_{\dbF}(t,T)}\les \G(t)$ for $ t\in[T-\kappa_{\l},T].$
	%In particular, $|Y_{T-\kappa_{\l}}|^2\les\|Y\|^2_{S^\i_{\dbF}(T-\kappa_{\l},T)}\les \G(T-\kappa_{\l})$.

	%Taking $T-\kappa_{\l}$ as the terminal time and $Y_{T-\kappa_{\l}}$ as the terminal value and
	%{\color{blue}
		In a similar way, for $t\in [T-2\kappa_{\l},T-\kappa_{\l})$, again,  \autoref{th 3.2} implies that   BSDE (\ref{MFBSDE}) admits a unique solution $(Y^{(2)},Z^{(2)})\in S_\dbF^\infty(T-2\kappa_{\l},T-\kappa_{\l};\dbR)\ts \cZ^2_\dbF(T-2\kappa_{\l},T-\kappa_{\l};\dbR^d)$.
		We define for $t\in [T-2\kappa_{\l},T], $
		$$
		\left(\overline{Y}_t, \overline{Z}_t\right):=\left(Y^{(1)}_t, Z^{(1)}_t\right)\, \chi_{[T-\kappa_{\l}, T]}(t)+\left(Y^{(2)}_t, Z^{(2)}_t\right) \chi_{[T-2\kappa_{\l}, T-\kappa_{\l})}(t).$$
		Here, $\chi_A$ stands for the indicator function of the set $A$. 
		Clearly, $(\overline{Y},\overline{Z})$  is the unique solution to  
		the mean-field BSDE (\ref{MFBSDE}) on the interval $[T-2\kappa_{\l},T]$. Besides,  proceeding identically to  the above, we have
		$$\|Y\|^2_{S^\i_{\dbF}(t,T)}\les \G(t),\q~ t\in[T-2\kappa_{\l},T].$$
		Repeating the preceding process allows us to demonstrate that the mean-field BSDE (\ref{MFBSDE}) has a unique solution $(Y,Z)$  on $[0,T]$, where $Y$ is uniformly bounded by  $\l=\G(0)$. In other words, there
		exists a positive constant $M_1$ depending only on $(K,K_1,K_3,T)$ such that
		\begin{equation*}\label{4.7.1.2}
			\|Y\|_{S_\dbF^\infty(0,T)}\leq  M_1.
		\end{equation*}

		We now show that $Z\cdot W$ is a $BMO(\dbP)$-martingale.
		For this, we define
		\begin{equation}\label{22.8.29.3}
			\Phi(x):=\frac{1}{\gamma^2}\big[\exp(\gamma|x|)-\gamma|x|-1\big],\quad x\in \mathbb{R}, 
		\end{equation}
		where $\g$ is a positive constant given in  \rf{22.5.18.1}. We have
		\begin{equation*}\label{22.5.18.2}
			\Phi'(x)=\frac{1}{\gamma}\big[\exp(\gamma |x|)-1\big]\sgn(x), \quad
			\Phi''(x)=\exp(\gamma|x|), \quad  \Phi''(x)-\gamma |\Phi'(x)|=1.
		\end{equation*}
		Using It\^{o}'s formula and \autoref{ass 3.2},  we have
		\begin{align*}
			\Phi(Y_t)=&\ \Phi(Y_T)
			+\int_{t}^{T}\Phi'(Y_s)g\big(s,Y_s,Z_s, \mathbb{P}_{Y_s}, \mathbb{P}_{Z_s}\big)ds\\   &-\int_{t}^{T}\Phi'(Y_s)Z_sdW_s-\frac{1}{2}\int_{t}^{T}\Phi''(Y_s)|Z_s|^2ds\\
			\les &\ \Phi(Y_T)+\int_{t}^{T}|\Phi'(Y_s)|
			\big\{\theta_s+K\big[|Y_s|+\|Y_s\|_{L^2(\Omega)}\big]\big\}ds
			-\int_{t}^{T}\Phi'(Y_s)Z_sdW_s\\
			& +\frac{1}{2}\int_{t}^{T}\big[\gamma|\Phi'(Y_s)| -\Phi''(Y_s)\big]|Z_s|^2ds.
		\end{align*}
		Then, from \autoref{ass 3.2}, we have
		\begin{equation*}
			\begin{aligned}
				\Phi(Y_t)+\frac{1}{2}\mathbb{E}_t\int_t^T|Z_s|^2ds
				&\leq \Phi(\|\eta\|_\infty)
				+\dbE_t\int_{t}^{T}|\Phi'(Y_s)|\big\{\theta_s+K\big[|Y_s|+\|Y_s\|_{L^2(\Omega)}\big]\big\}ds\\
				&\leq \Phi(K_1)+|\Phi'(M_1)|\dbE_t\int_{t}^{T}(\theta_s+2KM_1)ds\\
				& \leq \Phi(K_1)+|\Phi'(M_1)|\big[\sqrt{K_3 T}+ 2KM_1 T\big].\\
			\end{aligned}
		\end{equation*}
		In other words,
		\begin{equation*}
			\mathbb{E}_t\int_t^T|Z_s|^2ds\les
			2\Phi(K_1)+2|\Phi'(M_1)|\big[\sqrt{K_3 T}+ 2KM_1 T\big].
		\end{equation*}
		Therefore, we have
		\begin{equation*}
			\|Z\|^2_{ \cZ^2_\dbF(0,T)}
			=\|Z\cdot W\|^2_{BMO(\mathbb{P})}\leq2\Phi(K_1)+2|\Phi'(M_1)|\big[\sqrt{K_3 T}+ 2KM_1 T\big]\deq M_2.
		\end{equation*}

		Finally, we prove the uniqueness. Let $(Y,Z)$ and $(\bar{Y},\bar{Z})$
		be two adapted solutions of BSDE (\ref{MFBSDE}).
		We denote
		$$\Delta Y:=Y-\bar{Y},\quad \Delta Z:=Z-\bar{Z}.$$
		Then for arbitrary $\varepsilon>0$ and  $t\in[T-\e,T]$,  we have
		\begin{equation*}
			\begin{aligned}
				\Delta Y_t+\int_t^T\Delta Z_sd\widetilde{W}_s =\int_t^T\[g(s,Y_s,\bar{Z}_s, \dbP_{Y_s}, \dbP_{Z_s})-g(s,\bar{Y}_s,
				\bar{Z}_s, \dbP_{\bar{Y}_s}, \dbP_{\bar{Z}_s})\]ds,
			\end{aligned}
		\end{equation*}
		where $\widetilde{W}$ is given in (\ref{equ girsanov}).
		Now,  similar to (\ref{equ 3.12})-(\ref{equ 3.16-1}), we have that  for $t\in [T-\varepsilon,T],$
		\begin{equation*}
			\begin{aligned}
				&\|\Delta Y\|^2_{S_\dbF^\infty(t,T)}+c_1^2\|\Delta Z\|^2_{\cZ^2_\dbF(t,T)}\\
				&\leq
				32\e\phi\big(M_1\big)^2
				\Big(T+4c_2M_2+2M_2 \Big)\|\Delta Y\|^2_{S_\dbF^\infty(t,T)}\\
				&\q +6\e^{1-\alpha}\phi\big(M_1 \big)^2\|\Delta Z\|^2_{\cZ^2_\dbF(t,T)}(T^\alpha+2M_2),
			\end{aligned}
		\end{equation*}
		where $c_1$ and $c_2$ are given in \autoref{2.1.11}.
		Clearly,  $Y=\overline{Y}$ and $Z=\overline{Z}$ on the interval $[T-\varepsilon,T]$ if $\varepsilon$ is small enough. Repeating this argument in  a finite number of steps, the uniqueness is obtained.
	\end{proof}
	%}

\begin{remark}\rm
	Hibon, Hu, and Tang \cite{Hibon-Hu-Tang-17} give  the existence and uniqueness result on quadratic  mean-field BSDEs when the generators $g$ depend on the marginal expectation of the unknown processes $(Y, Z)$. In comparison with their work, our conditions are weaker, our results are more general, and our method for establishing the existence and uniqueness result is more powerful.
\end{remark}

%%%%%%%%%%%%%%%%%%%%%%%
%{\color {blue}
	
	\subsubsection{Global solution:  the generator $g$  has an unbounded growth in $\mu_2$} 
	
	In this subsection, we study the global solution to  BSDE (\ref{MFBSDE}), where the generator $g$ has an unbounded growth in $\mu_2$. We consider the following three different situations: 
	(i)   the generator $g$ has a strictly quadratic growth with respect to $z$ and  a sub-quadratic growth with respect to $\mu_2$;
	(ii)  the generator $g$ has a  quadratic  and  sub-quadratic growth with respect to $z$ and $\mu_2$, respectively,  with  the growth coefficient   of $g$ in  $\mu_2$  being sufficiently small;
	(iii)  the generator $g(t,y,z, \mu_1,\mu_2)=g_1(t,z)+g_2(t,\mu_2)$, where  both functionals  $g_1$ and $g_2$ have a quadratic growth with respect to their last arguments.

	Recall  that $\theta:\Om\ts[0,T]\rightarrow\dbR^+$ is an $\sF_t$-progressively measurable process,  and $\b$, $\b_0$, $\g$, $\g_0$, $\tilde{\g}$, and $\alpha\in[0,1)$ are all positive constants.

	\bas{ass 3.2-11}\rm
	Assume that the generator 
	$$g: \Omega\times[0,T]\times\dbR\times\dbR^d\times \cP_2(\dbR)\ts \cP_2(\dbR^d)\to \dbR$$ satisfy the following conditions: for $d\dbP\times dt$-almost all $(\o,t)\in\Om\ts[0,T]$,  we have 
	\begin{enumerate}[~~\,\rm (i)]
		\item the inequality 
		$$\hb{{\rm sgn}}(y)g(\omega,t,y,z,\mu_1,\mu_2)\leq \theta_t(\omega)+\beta|y|+\frac{\gamma}{2}|z|^2+\beta_0\cW_2(\mu_1,\delta_{\{0\}})
		+\gamma_0\cW_2(\mu_2,\delta_{\{0\}})^{1+\alpha}$$
		and 
		\item  either inequality 
		$$
		g(\omega,t,y,z,\mu_1,\mu_2)\leq
		-\frac{\tilde{\gamma}}{2}|z|^2+\theta_t(\omega)
		+\beta|y|+\beta_0  \cW_2(\mu_1,\delta_{\{0\}})
		+\gamma_0  \cW_2(\mu_2,\delta_{\{0\}})^{1+\alpha}
		$$
		or
		\begin{equation}\label{4.7.4}
			g(\omega,t,y,z,\mu_1,\mu_2)\geq \frac{\tilde{\gamma}}{2}|z|^2-\theta_t(\omega)
			-\beta|y|-\beta_0   \cW_2(\mu_1,\delta_{\{0\}})
			-\gamma_0   \cW_2(\mu_2,\delta_{\{0\}})^{1+\alpha}
		\end{equation}
	\end{enumerate}\eas
	for any $(y, z, \mu_1, \mu_2)\in\dbR\times\dbR^d\times \cP_2(\dbR)\ts \cP_2(\dbR^d)$.

	\begin{remark}\rm
		In \autoref{ass 3.2-11}, the condition (i) is some kind of one-sided linear growth condition of the generator $g$ with respect to the variable $y$, and the condition (ii) can be regarded as some kind of a strictly quadratic growth condition of the generator $g$ with respect to the variable $z$.
		A generator $g$ that satisfies Assumptions~\ref{ass 3.1} and \ref{ass 3.2-11} can still have a general growth in the variable $y$.  In addition, there are generators that satisfy all these conditions, but fail to satisfy the conditions in the work of Hibon, Hu, and Tang \cite{Hibon-Hu-Tang-17}.  See for instance, 
		\begin{align*} g(\omega,t,y,z,\mu_1,\mu_2): =-|z|^2+\cW_2(\mu_2, \d_{\{0\}})^\frac{5}{4}+e^{-2|y|}\cW_2(\mu_1, \d_{\{0\}})+y
		\end{align*}
		for any $(t, y, z, \mu_1, \mu_2)\in [0,T]\times \dbR\times\dbR^d\times \cP_2(\dbR)\ts \cP_2(\dbR^d)$. 
	\end{remark}

	We define the following constants for subsequent exposition.
	\begin{align}
		\ns\ds &L_3=\frac{(1-\alpha)\tilde{\gamma}\e_0}{4}\Big(\frac{1+\alpha}{2}
		\Big)^{\frac{1+\alpha}{1-\alpha}}
		\Big(\frac{2\gamma_0}{\tilde{\gamma}\e_0}
		\Big)^{\frac{2}{1-\alpha}}, \label{equ 3.34-1}\\
		\ns\ds& L_4=\frac{(1-\alpha)\tilde{\gamma}\e_0}{8}\Big(\frac{1+\alpha}{2}\Big)^{\frac{1+\alpha}{1-\alpha}}
		\Big(\frac{4\gamma_0}{\tilde{\gamma}}  \Big)^{\frac{2}{1-\alpha}}, \label{equ 3.34-2} \\
		\ns\ds &L_5=\[2(K_1+K_2)+2L_3T+4\e_0K_2+4L_4T\]e^{2(\beta+\beta_0)T}, \label{22.5.4.1}\\
		\ns\ds &L_6=\frac{2+(\beta+\beta_0)T}{\tilde{\gamma}}\Big(4L_5
		+16L_4T\Big)+\frac{4K_2}{\tilde{\gamma}}. \label{equ 3.34}
	\end{align}

	\begin{theorem}\label{th 3.5-1}\it 
		Under Assumptions~\ref{ass 3.1} and \ref{ass 3.2-11}, on the whole interval $[0,T]$, BSDE (\ref{MFBSDE}) admits a unique global solution $(Y,Z)\in S_\dbF^\infty(0,T;\dbR)\ts \cZ^2_\dbF(0,T;\dbR^d)$.
		Furthermore, there exist two positive constants $M_1$ and $M_2$, which depend only on the parameters $(K_1,K_2,T,\alpha, \tilde{\gamma},\gamma_0,\beta,\beta_0)$, such that
		\begin{align}\label{4.7.1}
			\|Y\|_{S_\dbF^\infty(0,T)}\leq  M_1  \q~
			\text{and}\q~  \|Z\|^2_{ \cZ^2_\dbF(0,T)}\leq M_2.
		\end{align} 
	\end{theorem}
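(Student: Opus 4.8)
The plan is to reduce the global existence to three ingredients: local existence from \autoref{th 3.2}, a pair of a priori bounds that are uniform in the length of the time interval, and a patching argument identical in spirit to the proof of \autoref{th 3.5}. Since Assumptions~\ref{ass 3.1} and \ref{ass 3.2-11} together imply the hypotheses of \autoref{th 3.2}, on each sufficiently short interval $[S,T]$ the equation \rf{MFBSDE} has a unique local solution, and the length of the interval produced by \autoref{th 3.2} depends only on the structural constants and on the a priori bounds, not on $S$. Consequently, once I exhibit constants $M_1,M_2$ with $\|Y\|_{S_\dbF^\infty}\le M_1$ and $\|Z\|^2_{\cZ^2_\dbF}\le M_2$ on every sub-interval, the local pieces can be glued in finitely many steps of fixed length to cover all of $[0,T]$, exactly as $(Y^{(1)},Z^{(1)}),(Y^{(2)},Z^{(2)}),\dots$ were concatenated in \autoref{th 3.5}.

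The heart of the matter is the derivation of the two a priori bounds, and here the estimates for $Y$ and $Z$ are genuinely coupled through the mean-field argument $\dbP_{Z_s}$. First I would use the strictly quadratic bound, condition (ii) of \autoref{ass 3.2-11}, to extract the good quadratic term: taking conditional expectations in \rf{MFBSDE} at a stopping time $\tau$ and isolating $\frac{\tilde{\g}}{2}\dbE_\tau\int_\tau^T|Z_s|^2\,ds$, I bound $|Y_s|$ and $\|Y_s\|_{L^2(\Om)}$ by $\|Y\|_{S_\dbF^\infty}$ and $\int_\tau^T\theta_s\,ds$ by $K_2$, which leaves only the dangerous term $\gamma_0\int_\tau^T\cW_2(\dbP_{Z_s},\d_{\{0\}})^{1+\alpha}\,ds=\gamma_0\int_\tau^T\|Z_s\|^{1+\alpha}_{L^2(\Om)}\,ds$. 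Because $\alpha\in[0,1)$, H\"older's inequality bounds this by $\gamma_0\|Z\|^{1+\alpha}_{\cZ^2_\dbF(\tau,T)}(T-\tau)^{\frac{1-\alpha}{2}}$, and a Young inequality with conjugate exponents $\frac{2}{1+\alpha}$ and $\frac{2}{1-\alpha}$ then absorbs a fraction $\frac{\tilde{\g}}{4}\|Z\|^2_{\cZ^2_\dbF}$ back to the left and leaves the explicit constant $L_4$ of \rf{equ 3.34-2}. This yields an estimate of $\|Z\|^2_{\cZ^2_\dbF}$ in terms of $\|Y\|_{S_\dbF^\infty}$ alone, with the factor $1/\tilde{\g}$ that one sees in \rf{equ 3.34}.

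Next I would close the loop on $Y$. Using the one-sided growth, condition (i) of \autoref{ass 3.2-11}, and controlling the quadratic-in-$z$ contribution both by the good sign supplied by condition (ii) and by the $Z$-estimate just obtained, a Gronwall argument in the linear coefficients $\beta,\beta_0$ produces the factor $e^{2(\beta+\beta_0)T}$; the terminal and driver data contribute $2(K_1+K_2)$, while the sub-quadratic term $\gamma_0\cW_2(\dbP_{Z_s},\d_{\{0\}})^{1+\alpha}$ is again handled by Young's inequality, this time with weight $\e_0$, producing the constants $L_3,L_4$ of \rf{equ 3.34-1}--\rf{equ 3.34-2} and the remaining ingredients $2L_3T,4\e_0K_2,4L_4T$ of $L_5$. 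Substituting the $Z$-estimate of the previous step and solving the resulting self-consistent inequality gives $\|Y\|_{S_\dbF^\infty(0,T)}\le M_1:=L_5$ as in \rf{22.5.4.1}; back-substitution then yields $\|Z\|^2_{\cZ^2_\dbF(0,T)}\le M_2:=L_6$ as in \rf{equ 3.34}. I expect this coupled closure to be the main obstacle: it is possible only because the mean-field dependence on $Z$ is \emph{strictly} sub-quadratic ($\alpha<1$) while condition (ii) supplies a genuinely quadratic good term of size $\tilde{\g}$, so that Young's inequality can trade the $\mu_2$-contribution for an absorbable quadratic piece plus a constant; if $\alpha=1$ this trade fails, which is exactly why the other cases require either smallness of $\gamma_0$ or the additive structure $g_1+g_2$.

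Finally, with $M_1,M_2$ uniform and depending only on $(K_1,K_2,T,\alpha,\tilde{\g},\gamma_0,\beta,\beta_0)$, I would run the patching argument of \autoref{th 3.5} to obtain a global solution on $[0,T]$, checking at each of the finitely many steps that the uniform bounds persist. Uniqueness is handled exactly as in \autoref{th 3.5}: for two bounded solutions I would linearize the difference of the generators, pass to an equivalent measure by the Girsanov transform \rf{equ girsanov}, invoke the local Lipschitz estimate of condition (ii) of \autoref{ass 3.1} together with \autoref{2.1.11} to reach a contraction of the type \rf{equ 3.16-1} on a short interval, and iterate over $[0,T]$.
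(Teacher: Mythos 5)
Your overall architecture is the paper's own: uniform a priori bounds on $(Y,Z)$ (the paper's Step 1), local existence from \autoref{th 3.2} with step length governed only by those uniform bounds, concatenation in finitely many steps (the paper's Step 2), and uniqueness via the Girsanov--contraction argument of \autoref{th 3.5}. Your $Z$-estimate is also exactly the paper's: stopping-time localization, the lower bound \rf{4.7.4} to isolate $\frac{\tilde{\gamma}}{2}\dbE_\tau\int_\tau^T|Z_s|^2ds$, and Young's inequality exploiting $\alpha<1$ to absorb the mean-field term (this is the content of \rf{equ 3.25-2}--\rf{equ 3.28} and \rf{equ 3.31}); your closing remark on why $\alpha=1$ breaks the trade is likewise the correct diagnosis.

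There is, however, one step in your closure of the $Y$-bound that, as literally written, would fail. You propose to control the quadratic contribution $\frac{\gamma}{2}|z|^2$ from condition (i) of \autoref{ass 3.2-11} ``by the good sign supplied by condition (ii) and by the $Z$-estimate just obtained''. This is quantitatively impossible: the $Z$-estimate reads $\frac{\tilde{\gamma}}{2}\dbE_t\int_t^T|Z_s|^2ds\les[2+(\beta+\beta_0)T]\,\|Y\|_{S_\dbF^\infty(t,T)}+K_2+\cdots$, so feeding it into $\dbE_t\int_t^T\frac{\gamma}{2}|Z_s|^2ds$ puts the coefficient $\frac{\gamma}{\tilde{\gamma}}[2+(\beta+\beta_0)T]\ges 2$ in front of $\|Y\|_{S_\dbF^\infty(t,T)}$ (compatibility of conditions (i) and (ii) forces $\tilde{\gamma}\les\gamma$), and a coefficient $\ges 2$ can never be absorbed into the left-hand side $|Y_t|$; shrinking the interval does not help, since the ``$2$'' in $[2+(\beta+\beta_0)T]$ comes from $|Y_t-Y_{\tau_k}|\les 2\|Y\|_{S_\dbF^\infty}$ and does not shrink with $T-t$. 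The paper's essential device is different: the term $\frac{\gamma}{2}|z|^2$ never enters the $Y$-estimate at all, because It\^o--Tanaka is applied to $\Psi(t,|Y_t|)$ with $\Psi(t,x)=\exp\big\{\gamma x+\gamma\int_0^t\big(\theta_s+\beta|Y_s|+\beta_0\|Y_s\|_{L^2(\Om)}+\gamma_0\|Z_s\|^{1+\alpha}_{L^2(\Om)}\big)ds\big\}$, whose second-order term $\frac12\Psi_{xx}|Z_t|^2=\frac{\gamma^2}{2}\Psi|Z_t|^2$ exactly cancels the contribution $\gamma\Psi\cdot\frac{\gamma}{2}|Z_t|^2$ permitted by condition (i); this yields \rf{equ 3.21}, in which the only surviving $Z$-dependence is the mean-field term $\gamma_0\|Z_s\|^{1+\alpha}_{L^2(\Om)}$. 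Only that term is then traded, by Young's inequality with the \emph{free} small weight $\e_0$, for $\frac{\tilde{\gamma}\e_0}{2}\dbE|Z_s|^2+L_3$, and it is this $\e_0$-weighted quadratic quantity --- not the fixed-weight $\frac{\gamma}{2}|Z_s|^2$ --- that is bounded by the $\e_0$-weighted $Z$-estimate \rf{equ 3.28}; the choice $\e_0=\frac{1}{4[2+(\beta+\beta_0)T]}$ makes the resulting coefficient of $\|Y\|_{S_\dbF^\infty}$ equal to $\frac12<1$, so the self-consistent inequality closes and Gronwall produces $L_5$. Your references to $\e_0$, $L_3$, $L_4$ and the ingredients of $L_5$ in \rf{22.5.4.1} suggest you intend this computation, but the exponential transform is the step that makes it possible and must be stated explicitly; the route your sentence literally describes is blocked.
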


	\begin{proof}
		
		The proof is divided into two steps. Firstly, we establish the estimate \rf{4.7.1}  on a small interval $[T-\kappa,T]$, where $\kappa\in(0,T]$ is a given constant. Secondly, we demonstrate the existence and uniqueness of global adapted solutions.

		{\it {Step 1.}} \label{Step1}  We show that for some given $\kappa\in(0,T]$, the solution $(Y,Z)$ to BSDE (\ref{MFBSDE}) on $[T-\kappa,T]$ satisfies
		\begin{equation}\label{equ 3.20}
			\|Y\|_{S_\dbF^\infty(T-\kappa,T)}\leq L_5\q~ \text{and}\
			\q~ \|Z\|^2_{ \cZ^2_\dbF(T-\kappa,T)}\leq L_6.
		\end{equation}
		For this, we let  $L$ be the local time of the process $Y$ at time $0$. Denote
		\begin{equation*}\label{3.20-1}
			\Psi(t,x)\deq\exp\Big\{ \gamma x+\gamma\int_0^t\(\theta_s+\beta|Y_s|+\beta_0\|Y_s\|_{L^2(\Om)}
			+\gamma_0\|Z_s\|^{1+\alpha}_{L^2(\Om)}\)ds\Big\}.\end{equation*}
		Applying It\^{o}-Tanaka's formula to $\Psi(t,|Y_t|)$, we have
		\begin{equation*}
			\begin{aligned}
				\ds d\Psi(t,|Y_t|)=&\ \Psi_t(t,|Y_t|)dt+\frac{1}{2}\Psi_{xx}(t,|Y_t|)|Z_t|^2dt\\
				\ns\ds &+\Psi_x(t,|Y_t|)\Big[-\sgn(Y_t)g(t,Y_t,Z_t,\mathbb{P}_{Y_t},\mathbb{P}_{Z_t})dt+\sgn(Y_t)Z_tdW_t+dL_t\Big].
			\end{aligned}
		\end{equation*}
		Notice that
		$$ \Psi_t(t,x)=\gamma\Psi(t,x)\Big(\theta_t+\beta|Y_t|+\beta_0\|Y_t\|_{L^2(\Om)}
		+\gamma_0\|Z_t\|^{1+\alpha}_{L^2(\Om)}\Big),$$
		and
		$$\Psi_x(t,x)=\gamma\Psi(t,x)\q\hb{and}\q\Psi_{xx}(t,x)=\gamma^2\Psi(t,x).$$
		Then from the condition (i) of \autoref{ass 3.2-11}, we have
		\begin{equation*}
			\begin{aligned}
				\ds d\Psi(t,|Y_t|)&=\gamma\Psi(t,|Y_t|)
				\Big[
				-\sgn(Y_t)g(t,Y_t,Z_t,\mathbb{P}_{Y_t},\mathbb{P}_{Z_t})+
				\theta_t+\beta|Y_t|+\beta_0\|Y_t\|_{L^2(\Om)}\\
				\ns\ds&\quad+\gamma_0\|Z_t\|^{1+\alpha}_{L^2(\Om)}
				+\frac{1}{2}\gamma|Z_t|^2\Big]dt+\gamma\Psi(t,|Y_t|)\sgn(Y_t)Z_tdW_t+\gamma\Psi(t,|Y_t|)dL_t\\
				\ns\ds &\geq \gamma\Psi(t,|Y_t|)\sgn(Y_t)Z_tdW_t.
			\end{aligned}
		\end{equation*}
		Integrating from $t$ to $T$ firstly and then taking conditional expectation $\mathbb{E}_t[\cdot]$, we obtain
		\begin{align*}
			\exp\big(\gamma|Y_t|  \big)
			\leq&\
			\dbE_t
			\exp\Big\{
			\gamma|\eta|+\gamma\int_t^T\(\theta_s+\beta|Y_s|+\beta_0\|Y_s\|_{L^2(\Om)}
			+\gamma_0\|Z_s\|^{1+\alpha}_{L^2(\Om)}\)ds
			\Big\}\\
			\ns\ds \les&\ \exp\Big\{
			\gamma(K_1+K_2)+\g\int_t^T(\beta+\beta_0)\|Y\|_{S_\dbF^\infty(s,T)}ds
			+\g\gamma_0\int_t^T\|Z_s\|^{1+\alpha}_{L^2(\Om)}ds
			\Big\},
		\end{align*}
		where we have used the condition (iii) of \autoref{ass 3.1}. Therefore, it yields that for $t\in[T-\kappa, T]$,
		\begin{equation}\label{equ 3.21}
			|Y_t|\leq (K_1+K_2)+\int_t^T(\beta+\beta_0)\|Y\|_{S_\dbF^\infty(s,T)}ds
			+\gamma_0\int_t^T\|Z_s\|^{1+\alpha}_{L^2(\Om)}ds.
		\end{equation}
		The last term in the right hand side of \rf{equ 3.21} is estimated below.
		Since we have from Young's inequality  that for positive numbers $a$ and $b$,
		\begin{equation}\label{equ 3.22}
			a b^{1+\a}=\left(\left(\frac{1+\a}{2}\right)^{\frac{1+\a}{1-\a}} a^{\frac{2}{1-\a}}\right)^{\frac{1-\a}{2}}\left(\frac{2}{1+\a} b^{2}\right)^{\frac{1+\a}{2}} \leq b^{2}+\frac{1-\a}{2}\left(\frac{1+\a}{2}\right)^{\frac{1+\a}{1-\a}} a^{\frac{2}{1-\a}}, 
		\end{equation}
		it follows for  a positive constant  $\e_0$ to be determined later (taking 
		$a=\frac{2\gamma_0}{\tilde{\gamma}\e_0}$ and $b=\|Z_s\|_{L^2(\Om)}$ in the preceding inequality)
		\begin{equation*}
			\begin{aligned}
				\gamma_0\|Z_s\|^{1+\alpha}_{L^2(\Om)}&=\frac{\tilde{\gamma}\e_0}{2}
				\Big[\frac{2\gamma_0}{\tilde{\gamma}\e_0}\|Z_s\|^{1+\alpha}_{L^2(\Om)}\Big]
				\leq \frac{\tilde{\gamma}\e_0}{2}\dbE|Z_s|^2+L_3.
			\end{aligned}
		\end{equation*}
		In view of inequality~\eqref{equ 3.21}, we arrive at the following inequality: for $t\in[T-\kappa,T]$,
		\begin{equation}\label{equ 3.25}
			|Y_t|\leq (K_1+K_2)+\int_t^T(\beta+\beta_0)\|Y\|_{S_\dbF^\infty(s,T)}ds
			+L_3T +\dbE\int_t^T\frac{\tilde{\gamma}\e_0}{2}|Z_s|^2ds.
		\end{equation}
		The last term in the right hand side of the last inequality is estimated below. We define the  stopping time
		$$\tau_k:=T\wedge \inf\Big\{s\in[t,T]: \int_t^s|Z_r|^2dr\geq k\Big\},$$
		with the convention that $\inf \emptyset=\infty$.
		Then the condition \rf{4.7.4} allows us to show that
		\begin{equation*}\label{equ 3.25-1}
			\begin{aligned}
				\ds &Y_t-Y_{\tau_k}=\int_t^{\tau_k}g(s,Y_s,Z_s, \dbP_{Y_s},\dbP_{Z_s})ds-\int_t^{\tau_k}Z_sdW_s\\
				\ns\ds&\geq\int_t^{\tau_k} \(
				\frac{\tilde{\gamma}}{2}|Z_s|^2-\theta_s(\omega)
				-\beta|Y_s|-\beta_0\|Y_s\|_{L^2(\Om)}
				-\gamma_0\|Z_s\|_{L^2(\Om)}^{1+\alpha}
				\)ds-\int_t^{\tau_k}Z_sdW_s.
			\end{aligned}
		\end{equation*}
		In view of the boundness of $\theta$, it follows that
		\begin{equation}\label{equ 3.25-2}
			\begin{aligned}
				\ds& \int_t^{\tau_k} \frac{\tilde{\gamma}}{2}|Z_s|^2ds\\
				&\leq Y_t-Y_{\tau_k}+
				\int_t^{\tau_k}\big(
				\theta_s(\omega)
				+\beta|Y_s|+\beta_0\|Y_s\|_{L^2(\Om)}
				+\gamma_0\|Z_s\|_{L^2(\Om)}^{1+\alpha}
				\big)ds+\int_t^{\tau_k}Z_sdW_s\\
				\ns\ds &\leq [2+(\beta+\beta_0)T]\|Y\|_{S_\dbF^\infty(t,T)}
				+K_2+\int_t^{\tau_k}
				\gamma_0\|Z_s\|_{L^2(\Om)}^{1+\alpha}ds+\int_t^{\tau_k}Z_sdW_s.
			\end{aligned}
		\end{equation}
		Multiplying $\e_0$ and taking the expectation on both sides of the above inequality, and using Fatou's lemma, we have
		\begin{equation}\label{equ 3.26}
			\begin{aligned}
				\dbE
				\int_t^T \frac{\tilde{\gamma}\e_0}{2}|Z_s|^2ds
				\leq
				\e_0[2+(\beta+\beta_0)T]\|Y\|_{S_\dbF^\infty(t,T)}
				+\e_0K_2
				+\dbE\int_t^T\e_0\gamma_0\|Z_s\|_{L^2(\Om)}^{1+\alpha}ds.
			\end{aligned}
		\end{equation}
		On the other hand, taking $a=\frac{4\gamma_0}{\tilde{\gamma}}$ and  $b=\|Z_s\|_{L^2(\Om)}$ in the inequality \eqref{equ 3.22}, it yields
		\begin{equation}\label{equ 3.27}
			\begin{aligned}
				\e_0\gamma_0\|Z_s\|_{L^2(\Om)}^{1+\alpha}
				&=\frac{\tilde{\gamma}\e_0}{4}
				\bigg[\frac{4\gamma_0}{\tilde{\gamma}}\|Z_s\|_{L^2(\Om)}^{1+\alpha}\bigg]
				\leq \dbE\Big[
				\frac{\tilde{\gamma}\e_0}{4}|Z_s|^2\Big]+L_4.
			\end{aligned}
		\end{equation}
		Combining \eqref{equ 3.26} and \eqref{equ 3.27}, we obtain
		
		\begin{equation}\label{equ 3.28}
			\begin{aligned}
				\dbE
				\int_t^T \frac{\tilde{\gamma}\e_0}{2}|Z_s|^2ds
				\leq 2\varepsilon_0[2+(\beta+\beta_0)T]\|Y\|_{S_\dbF^\infty(t,T)}
				+2\e_0K_2+2L_4T.
			\end{aligned}
		\end{equation}
		Hence, from (\ref{equ 3.25}) and (\ref{equ 3.28}), one gets
		\begin{equation*}
			\begin{aligned}
				\ds |Y_t|\leq&\ (K_1+K_2)+\int_t^T(\beta+\beta_0)\|Y\|_{S_\dbF^\infty(s,T)}ds+L_3T\\
				\ns\ds &\ + 2\varepsilon_0[2+(\beta+\beta_0)T]\|Y\|_{S_\dbF^\infty(t,T)}
				+2\e_0K_2+2L_4T.
			\end{aligned}
		\end{equation*}
		Setting 
		$$\e_0:=\frac{1}{4[2+(\beta+\beta_0)T]},$$
		it follows
		\begin{equation*}
			\begin{aligned}
				\|Y\|_{S_\dbF^\infty(t,T)}
				&\leq 2(K_1+K_2)+2L_3T
				+4\e_0K_2+4L_4T+2\int_t^T(\beta+\beta_0)\|Y\|_{S_\dbF^\infty(s,T)}ds.
			\end{aligned}
		\end{equation*}
		Using Gronwall's inequality, we have
		\begin{equation*}
			\begin{aligned}
				\|Y\|_{S_\dbF^\infty(t,T)}\leq L_5, \q~ \forall t\in[T-\kappa,T].
			\end{aligned}
		\end{equation*}
		In particular,
		\begin{equation}\label{equ 3.30}
			\begin{aligned}
				\|Y\|_{S_\dbF^\infty(T-\kappa,T)}\leq L_5.
			\end{aligned}
		\end{equation}
		In addition, taking the conditional expectation $\mathbb{E}_t[\cdot]$ on both sides of (\ref{equ 3.25-2}), similar to (\ref{equ 3.26}), we have
		\begin{equation}\label{equ 3.31}
			\begin{aligned}
				\dbE_t
				\int_t^T \frac{\tilde{\gamma}\e_0}{2}|Z_s|^2ds
				\leq
				\e_0[2+(\beta+\beta_0)T]\|Y\|_{S_\dbF^\infty(t,T)}
				+\e_0K_2
				+\int_t^T\e_0\gamma_0\|Z_s\|_{L^2(\Om)}^{1+\alpha}ds.
			\end{aligned}
		\end{equation}
		From (\ref{equ 3.27}) and (\ref{equ 3.31}), we see
		\begin{equation}\label{equ 3.322222}
			\begin{aligned}
				\dbE_t
				\int_t^T \frac{\tilde{\gamma}\e_0}{2}|Z_s|^2ds
				\leq
				\e_0[2+(\beta+\beta_0)T]\|Y\|_{S_\dbF^\infty(t,T)}
				+\e_0K_2+L_4T
				+\mathbb{E}\int_t^T\frac{\tilde{\gamma}\varepsilon_0}{4}|Z_s|^2ds.
			\end{aligned}
		\end{equation}
		In view of (\ref{equ 3.28}), we have
		\begin{equation*}\label{equ 3.33}
			\begin{aligned}
				\dbE_t
				\int_t^T \frac{\tilde{\gamma}\e_0}{2}|Z_s|^2ds
				\leq
				2\e_0[2+(\beta+\beta_0)T]\|Y\|_{S_\dbF^\infty(t,T)}
				+2\e_0K_2+2L_4T.
			\end{aligned}
		\end{equation*}
		Combining the definition of $\varepsilon_0$  and (\ref{equ 3.30}), we have
		\begin{equation*}\label{equ 3.35}
			\|Z\|^2_{ \cZ^2_\dbF(T-\kappa,T)}\leq L_6.
		\end{equation*}

		%{\color{blue}

			{\it {Step 2.}} We prove the existence and uniqueness of global adapted solutions.

			Note that the main idea in proving the existence and uniqueness result on the mean-field BSDE is twofold. Firstly, we employ \autoref{th 3.2}  to establish the unique solution of the mean-field BSDE (\ref{MFBSDE}) on small time intervals. Secondly, we utilize {\it Step 1} to demonstrate that the obtained solution $(Y,Z)$ satisfies the estimates \rf{equ 3.20} over the whole interval $[0, T]$. The detailed proof is structured into several steps. Let us proceed with the first step.

			%Let $\kappa_0=\frac{\e^*}{2}$, where the positive constant $\e^*$ comes from \autoref{th 3.2}.
			%First, we prove the existence and uniqueness on the small interval $[T-\kappa_0,T].$ Set $m:=[\frac{T}{\kappa_0}]+1$, where $[a]$ denotes the maximum of integers smaller than or equal to $a$.
			%%
			%Recall that $\e^*$ depends only on $K_1,K_2, \a, \g,\g_0$, hence, $m$ depends on $K_1,K_2, \a, \g,\g_0,T,\phi(\cd),\phi_0(\cd)$.
			%
			First, let $\kappa_0=\frac{\e^*}{2}$, where the positive constant $\e^*$ is given in  \autoref{th 3.2}. Since   $\|\eta\|\les K_1$,  \autoref{th 3.2} implies that the mean-field BSDE (\ref{MFBSDE}) has a unique adapted solution $(Y^{(1)},Z^{(1)})\in S_\dbF^\infty(T-\kappa_0,T)\ts\cZ^2_\dbF(T-\kappa_0,T)$ on the interval $[T-\kappa_0, T]$.
			In addition, {\it {Step 1}} implies that
			\begin{equation}\label{4.7.3}
				\|Y^{(1)}\|_{S_\dbF^\infty(T-\kappa_0,T)}\leq  L_5\q~
				\text{and}\q~
				\|Z^{(1)}\|^2_{ \cZ^2_\dbF(T-\kappa_0,T)}\leq L_6.
			\end{equation}
			%respectively.
			%
			%In  particular,
			%%
			%$|Y^{(1)}_{T-\kappa_0}|\leq  L_5 e^{2(\beta+\beta_0)T}.$

			Second, we prove the existence and uniqueness result on the small interval $[T-2\kappa_0, T-\kappa_0]$. Note that here the terminal value is $Y^{(1)}_{T-\kappa_0}$, which is bounded by $L_5$ from \rf{4.7.3}.
			From \autoref{th 3.2} again we have that the mean-field BSDE (\ref{MFBSDE}) has a unique adapted solution
			$(Y^{(2)},Z^{(2)})\in S_\dbF^\infty(T-2\kappa_0, T-\kappa_0)\ts\cZ^2_\dbF(T-2\kappa_0, T-\kappa_0)$
			on the interval $[T-2\kappa_0, T-\kappa_0]$.  Define for $t\in [T-\kappa_0, T]$, 
			$$
			\left(\tilde{Y}_t, \tilde{Z}_t\right):=\left(Y^{(1)}_t, Z^{(1)}_t\right)\, \chi_{[T-\kappa_0, T]}(t)
			+\left(Y^{(2)}_t,Z^{(2)}_t \right)\, \chi_{[T-2\kappa_0, T-\kappa_0)}(t).$$
			Then $(\tilde{Y},\tilde{Z})$  is the unique solution to  
			the mean-field BSDE (\ref{MFBSDE}) on the interval $[T-2\kappa_0,T]$. 
			And similar to {\it Step 1}, we have
			$$\|Y\|_{S^\i_{\dbF}(T-2\kappa_0,T)}\les L_5\q \hbox{and}\q \|Z\|^2_{ \cZ^2_\dbF(T-2\kappa_0,T)}\leq L_6.$$

			Finally,  set $m\deq[\frac{T}{\kappa_0}]+1$.
			Repeating $m$ times the previous procedure, we  establish the existence and uniqueness of a solution $(Y,Z)$ for the mean-field BSDE (\ref{MFBSDE}) on the interval $[0,T]$. Furthermore, we have
			$$\|Y\|_{S^\i_{\dbF}(0,T)}\les L_5\q \hbox{and}\q \|Z\|^2_{ \cZ^2_\dbF(0,T)}\leq L_6.$$
		This completes the proof.
	\end{proof}
	
	%------------2023.12.10------------

	In \autoref{th 3.5-1},   the condition (ii) of \autoref{ass 3.2-11}  is significant. 
	However, without this condition,  it is possible to obtain a unique global solution of BSDE \rf{MFBSDE} if $\gamma_0$ is sufficiently small and $g$ is bounded with respect to $Y$ and $\dbP_Y$.

	\bas{ass 3.1.2}\rm  The terminal value $\eta:\Om\rightarrow\dbR$ and the generator $g: \Omega\times[0,T]\times\dbR\times\dbR^d\times \cP_2(\dbR)\ts\cP_2(\dbR^{d})\to \dbR$ satisfy the following conditions:
	\begin{enumerate}[~~\,\rm (i)]
		\item For $d\dbP\times dt$-almost all $(\o,t)\in\Om\ts[0,T]$, 
		$$ |g(w,t,y,z,\mu_1,\mu_2)|\les \theta_t(\omega)+\frac{1}{2}\gamma |z|^2
		+\gamma_0\cW_2(\mu_2,\d_{\{0\}})^{1+\alpha}$$
		with  all $(y, z, \mu_1, \mu_2)\in\dbR\times\dbR^d\times \cP_2(\dbR)\ts\cP_2(\dbR^{d})$. 
		\medskip
		\item There is a positive constant $C$ such that for $d\dbP\times dt$-almost all  $(\o,t)\in\Om\ts[0,T]$, 
		$$\ba{ll}
		&|g(\o,t,y,z,\m_1,\mu_2)-g(\o,t,\bar y,\bz,\bar\mu_1,\bar\mu_2)| \\
		&\les  C\bigg\{ |y-\bar{y}|+(1+|z|+|\bar{z}|)\cd|z-\bar{z}|+\cW_2(\mu_1,\bar{\mu}_1)\\
		&\qq\q  +\Big(1+\cW_2(\mu_2,\d_{\{0\}})^{\alpha}
		+\cW_2(\bar{\mu}_2,\d_{\{0\}})^{\alpha}\Big)\cW_2(\mu_2,\bar{\mu}_2)\bigg\}
		\ea$$
		with  all $(y, z; \by,\bz;\mu_1, \mu_2; \bar{\m}_1, \bar{\m}_2)\in \left(\dbR\times\dbR^d\right)^2\times \left(\cP_2(\dbR)\ts\cP_2(\dbR^{d})\right)^2$. 
		\medskip
		\item  There are two positive constants $K_1$ and $K_2$ such that
		$$\|\eta\|_\infty\les K_1\q\hb{and}\q \bigg\|\int_0^T\theta_t(\omega)dt\bigg\|_\infty\les K_2.$$
	\end{enumerate}\eas

	\begin{theorem}\label{th 3.5-12}
		Under   \autoref{ass 3.1.2}, if  $\gamma_0$ is sufficiently small,  BSDE \rf{MFBSDE}
		has a unique global solution $(Y,Z)\in S_\dbF^\infty(0,T;\dbR)\ts \cZ^2_\dbF(0,T;\dbR^d)$.
		Furthermore, there exist two positive constants $\tilde M_1$ and $\tilde M_2$, which only depend on $(K_1,K_2,T,\alpha, {\gamma},\gamma_0,C)$,  such that 
		\begin{align}\label{4.7.1-1}
			\|Y\|_{S_\dbF^\infty(0,T)}\leq  \tilde M_1  \q~
			\text{and}\q~  \|Z\|^2_{ \cZ^2_\dbF(0,T)}\leq\tilde  M_2.
		\end{align} 
	\end{theorem}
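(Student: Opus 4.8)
The plan is to mirror the two-part strategy of \autoref{th 3.5} and \autoref{th 3.5-1}: first obtain uniform a priori bounds on $\|Y\|_{S_\dbF^\infty}$ and $\|Z\|_{\cZ^2_\dbF}$ for any bounded solution of \rf{MFBSDE} on a subinterval $[a,T]$ ending at $T$ (terminal value $\eta$), and then construct the global solution by patching together the local solutions supplied by \autoref{th 3.2}, the a priori bounds preventing blow-up. The essential difference with \autoref{th 3.5-1} is that the strict quadratic lower bound (condition (ii) of \autoref{ass 3.2-11}) is no longer available; its role in closing the estimates is now played by the smallness of $\gamma_0$.

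For the a priori bounds, let $(Y,Z)\in S_\dbF^\infty(a,T;\dbR)\ts\cZ^2_\dbF(a,T;\dbR^d)$ solve \rf{MFBSDE} on $[a,T]$. Since \autoref{ass 3.1.2}(i) imposes no growth in $y$ or $\mu_1$ and $\cW_2(\dbP_{Z_s},\d_{\{0\}})=\|Z_s\|_{L^2(\Om)}$ is deterministic in $s$, we have $\sgn(Y_s)g(s,Y_s,Z_s,\dbP_{Y_s},\dbP_{Z_s})\les\theta_s+\frac{\gamma}{2}|Z_s|^2+\gamma_0\|Z_s\|_{L^2(\Om)}^{1+\alpha}$, and the It\^o--Tanaka computations in the proof of \autoref{le 7.1} apply verbatim (with $\phi\equiv\phi_0\equiv0$ and the frozen process $Q$ replaced by the solution's own $Z$). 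The transform $e^{\gamma|Y_t|}$, in which the quadratic term is cancelled against the second-order It\^o term and the local time contributes with a favorable sign, yields in place of \rf{equ 7.2}
$$\|Y\|_{S_\dbF^\infty(a,T)}\les K_1+K_2+\gamma_0 T^{\frac{1-\alpha}{2}}\|Z\|_{\cZ^2_\dbF(a,T)}^{1+\alpha},$$
while the transform $e^{2\gamma|Y_t|}$ gives, in place of \rf{equ 7.3},
$$\|Z\|_{\cZ^2_\dbF(a,T)}^2\les\frac{1}{\gamma^2}e^{2\gamma K_1}+\frac{2}{\gamma}e^{2\gamma\|Y\|_{S_\dbF^\infty(a,T)}}\Big(K_2+\gamma_0 T^{\frac{1-\alpha}{2}}\|Z\|_{\cZ^2_\dbF(a,T)}^{1+\alpha}\Big).$$

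Writing $R\deq\|Y\|_{S_\dbF^\infty(a,T)}$ and $\zeta\deq\|Z\|_{\cZ^2_\dbF(a,T)}^2$ and substituting the first bound into the second produces a single scalar inequality $\zeta\les\Psi_{\gamma_0}(\zeta)$, where $\Psi_{\gamma_0}$ is increasing and, at $\gamma_0=0$, collapses to the constant $\zeta^*\deq\frac{1}{\gamma^2}e^{2\gamma K_1}+\frac{2K_2}{\gamma}e^{2\gamma(K_1+K_2)}$. Here lies the main obstacle: because $\Psi_{\gamma_0}$ grows exponentially in $\zeta$, the feasible set $\{\zeta\ges0:\zeta\les\Psi_{\gamma_0}(\zeta)\}$ is disconnected, consisting of a bounded lower component $[0,\zeta_-]$ and an unbounded upper one $[\zeta_+,\infty)$, so the a priori inequality alone does not bound $\zeta$. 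I would choose $\tilde M_2\deq2\zeta^*$ and $\tilde M_1\deq K_1+K_2+\gamma_0 T^{\frac{1-\alpha}{2}}(2\zeta^*)^{\frac{1+\alpha}{2}}$; since $\Psi_{\gamma_0}(2\zeta^*)\to\zeta^*<2\zeta^*$ as $\gamma_0\to0$, there is $\gamma_0^*>0$, depending only on $(K_1,K_2,T,\alpha,\gamma)$, such that $\Psi_{\gamma_0}(\tilde M_2)<\tilde M_2$ whenever $\gamma_0\les\gamma_0^*$, i.e. $\tilde M_2$ lies strictly inside the forbidden gap $(\zeta_-,\zeta_+)$ with $\zeta_-<\tilde M_2$. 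The large component is then excluded by continuity along the patching: $\zeta(a)=\|Z\|_{\cZ^2_\dbF(a,T)}^2$ tends to $0$ as $a\to T$, varies continuously as the solution is extended, and can never enter the gap $(\zeta_-,\zeta_+)$; hence it remains in $[0,\zeta_-]\subset[0,\tilde M_2]$, and correspondingly $R\les\tilde M_1$, for every $a\in[0,T]$.

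Granting these uniform bounds, the remaining two steps are routine. For existence, with the uniform terminal bound $\tilde M_1$, \autoref{th 3.2} (applicable since \autoref{ass 3.1.2} implies \autoref{ass 3.1} with the constant $C$ playing the role of $\phi$) produces a local solution on an interval of fixed length $\kappa_0=\kappa_0(\tilde M_1,K_2,\alpha,\gamma,\gamma_0,C)>0$; patching and reapplying the a priori estimate to the glued solution, which still solves \rf{MFBSDE} with terminal value $\eta$, keeps it within $\tilde M_1,\tilde M_2$, so finitely many steps of length $\kappa_0$ reach $[0,T]$. For uniqueness, given two bounded solutions $(Y,Z)$ and $(\bar Y,\bar Z)$, I would linearize the generator in its $z$-argument, writing the difference of the two $z$-terms as $\Lambda_s\,\Delta Z_s$ with $|\Lambda_s|\les C(1+|Z_s|+|\bar Z_s|)$ by \autoref{ass 3.1.2}(ii), and pass to the equivalent measure $\dbQ$ through the Girsanov transform \rf{equ girsanov}, exactly as in the proof of \autoref{th 3.2}. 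The residual $y$-, $\mu_1$- and $\mu_2$-differences are controlled by \autoref{ass 3.1.2}(ii) together with $\cW_2(\dbP_{Y_s},\dbP_{\bar Y_s})\les\|\Delta Y\|_{S_\dbF^\infty(T-\e,T)}$ and $\cW_2(\dbP_{Z_s},\dbP_{\bar Z_s})\les\|\Delta Z_s\|_{L^2(\Om)}$, and, invoking \autoref{2.1.11} and \rf{4.3.1}, yield a contraction on a short interval $[T-\e,T]$ that forces $\Delta Y=\Delta Z=0$ there; iterating over finitely many intervals gives uniqueness on $[0,T]$.
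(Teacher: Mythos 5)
You have correctly isolated the real difficulty---the self-consistent inequality $\zeta\les\Psi_{\gamma_0}(\zeta)$ for $\zeta=\|Z\|^2_{\cZ^2_\dbF(a,T)}$ has a disconnected solution set, so smallness of $\gamma_0$ alone cannot bound $\zeta$---but the device you use to select the bounded component does not work. Your exclusion of the upper branch rests on two claims: that $\zeta(a)\to0$ as $a\to T$, and that $a\mapsto\zeta(a)$ varies continuously as the solution is extended. Neither is proved, and both are false in general, even for solutions of \rf{MFBSDE} under \autoref{ass 3.1.2}. Take $g\equiv0$, $d=1$, and $\eta=\sgn(W_{t_0}-W_{t_0/2})$ for some $t_0\in(0,T]$; then $Y_t=\dbE_t[\eta]$ and
$$\dbE_\tau\Big[\int_\tau^T|Z_s|^2\,ds\Big]=\dbE_\tau[\eta^2]-|Y_\tau|^2=1-|Y_\tau|^2.$$
For $a\ges t_0$ one has $Z\equiv0$ on $[a,T]$, so $\zeta(a)=0$; but for $a\in(t_0/2,t_0)$, on the positive-probability event $\{|W_a-W_{t_0/2}|\les\sqrt{t_0-a}\}$ the conditional variance $1-|Y_a|^2$ is bounded below by a universal constant $c_0>0$, whence $\zeta(a)\ges c_0$. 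Thus $\zeta$ jumps at the interior time $t_0$, and taking $t_0=T$ shows $\zeta(a)\not\to0$ as $a\to T$. The map $a\mapsto\zeta(a)$ is merely monotone, and monotone functions jump; ``continuity along the patching'' therefore carries no force, and since the branch selection is exactly the crux you identified, the a priori bound---and with it the proof---is not established. (A repair is possible, but it must be discrete rather than topological: each application of \autoref{th 3.2} increases $\zeta$ by at most the fixed constant $L_2$ of that theorem, while the gap $(\zeta_-,\zeta_+)$ widens without bound as $\gamma_0\to0$; for $\gamma_0$ small the solution then cannot cross the gap in a single patching step, provided you also verify the base case $\zeta\les L_2<\zeta_+$ on the first interval.)

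For comparison, the paper never meets the branch-selection problem because it never estimates the solution against itself: it solves \rf{bsde12.15} by a Picard scheme in which only the law argument is lagged,
$$Y^{n+1}_t=\eta+\int_t^T g\big(s,Z^{n+1}_s,\dbP_{Z^n_s}\big)\,ds-\int_t^T Z^{n+1}_s\,dW_s,\qq Z^1=0,$$
each step being a quadratic BSDE with a frozen law input, solvable by \autoref{le 7.1}. The resulting estimate bounds $\|Z^{n+1}\|^2_{\cZ^2_\dbF}$ by an increasing function of $\|Z^n\|^2_{\cZ^2_\dbF}$ (not of $\|Z^{n+1}\|^2_{\cZ^2_\dbF}$), and for $\gamma_0$ small this map sends the ball of radius $\tilde M_2=\frac{2}{\gamma^2}e^{2\gamma K_1}$ into itself; since the iteration starts at $0$, the bound $\|Z^n\|^2_{\cZ^2_\dbF}\les\tilde M_2$ propagates by induction on $n$, i.e.\ the lower branch is selected automatically by the initialization, with no connectedness argument. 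Cauchyness of the iterates follows from the contraction estimates behind \autoref{th 3.2}, uniqueness as in \autoref{th 3.5}, and the general case ($g$ bounded in $(y,\mu_1)$) is handled identically. If you wish to keep your a-priori-bound architecture, you must replace the continuity step by the discrete increment-versus-gap argument sketched above; otherwise the paper's lagged iteration is the cleaner route.
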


	\begin{proof}
		For simplicity of presentation, we firstly consider the case that $g$ does not depend on $Y$ and $\dbP_Y$:
		\begin{equation}\label{bsde12.15}
			Y_t=\eta+\int_t^T g(s,Z_s,\dbP_{Z_s})ds-\int_t^TZ_sdW_s,\q t\in[0,T].
		\end{equation}
		We  construct a Cauchy sequence $\{(Y^n,Z^n), n\ge 0\}$ to prove the result. For this, we let $Z^1=0$ and for $n\ges 1$,
		\begin{equation}\label{bsde12.15.2}
			Y^{n+1}_t=\eta+\int_t^Tg(s,Z^{n+1}_s,\dbP_{Z^n_s})ds-Z^{n+1}_sdW_s, \q t\in[0,T].
		\end{equation}
		Then, according to \autoref{le 7.1}, BSDE (\ref{bsde12.15.2}) admits a unique solution
		$(Y^{n+1},Z^{n+1})\in S_\dbF^\infty(0,T;\dbR)\ts \cZ^2_\dbF(0,T;\dbR^d)$ for each $n\in\dbN$.
		Moreover,  
		similar to \rf{equ 7.2} and \rf{equ 7.3}, and recall the inequality \rf{equ 3.22}, we have  that (by taking $\theta=0$ for simplicity) for any $t\in[0,T]$,
		
		\begin{equation}\label{equ 3.3-2}
			\begin{aligned}
				\|Y^{n+1}\|_{S_\dbF^\infty(t,T)}\leq&\  \|\eta\|_\infty
				+\gamma_0T^{\frac{1-\alpha}{2}}\|Z^n\|^{1+\alpha}_{\cZ^2_\dbF(t,T)}\\
				\les &\ \|\eta\|_\infty
				+\gamma_0T^{\frac{1-\alpha}{2}}(C_\a+\|Z^n\|^{2}_{\cZ^2_\dbF(t,T)}),
			\end{aligned}
		\end{equation}
		and
		\begin{equation}\label{equ 3.6-2}
			\begin{aligned}
				& \|Z^{n+1}\|_{\cZ^2_\dbF(t,T)}^2
				\leq\frac{1}{\gamma^2}\exp\big(2\gamma\|\eta\|_\infty\big)
				+\frac{2\gamma_0}{\gamma}\exp\big(2\gamma\|Y^{n+1}\|_{S_\dbF^\infty(t,T)}\big)\cd
				T^{\frac{1-\a}{2}}\|Z^n\|^{1+\alpha}_{\cZ^2_\dbF(t,T)}\\
				&\	\leq
				\frac{2\gamma_0}{\gamma}\exp\Big\{2\gamma\(\|\eta\|_\infty
				+\gamma_0T^{\frac{1-\alpha}{2}}(C_\a+\|Z^n\|^{2}_{\cZ^2_\dbF(t,T)})  \)  \Big\}
				\cd
				T^{\frac{1-\a}{2}}(C_\a+\|Z^n\|^{2}_{\cZ^2_\dbF(t,T)})\\
				&\q	\ +\frac{1}{\gamma^2}\exp\big(2\gamma\|\eta\|_\infty\big),
			\end{aligned}
		\end{equation}
		where $C_\a$ is a constant only depending on $\a$. 
		We now prove by induction that for each $n$,
		\begin{equation}\label{12.11.1}
			\|Z^n\|^{2}_{\cZ^2_\dbF(t,T)}\les \frac{2}{\gamma^2}\exp\big(2\gamma\|\eta\|_\infty\big)\deq \tilde M_2.
		\end{equation}
		In fact,  if  the last inequality is true  for $n$,  we have  for  sufficiently small $\g_0$, 
		$$
		\frac{2\gamma_0}{\gamma}\exp\Big\{2\gamma\(\|\eta\|_\infty
		+\gamma_0T^{\frac{1-\alpha}{2}}(C_\a+\tilde M_2 ) \)  \Big\}\cd T^{\frac{1-\a}{2}}(C_\a+\tilde M_2)
		\les \frac{1}{2}\tilde M_2,
		$$
		and further in view of  \rf{equ 3.6-2}, we have
		\begin{equation*}
			\|Z^{n+1}\|^{2}_{\cZ^2_\dbF(t,T)}\les \tilde M_2, 
		\end{equation*}
		which is  the inequality \rf{12.11.1} with $n$ being replaced with  $n+1$. Hence,   the inequality \rf{12.11.1}  is all true for  $n\in\dbN$ by induction.  Moreover, we have from \rf{equ 3.3-2}, 
		\begin{equation}\label{equ 3.3-3}
			\begin{aligned}
				\|Y^{n+1}\|_{S_\dbF^\infty(t,T)}
				\les  \|\eta\|_\infty
				+\gamma_0T^{\frac{1-\alpha}{2}}(C_\a+\tilde M_2)\deq \tilde M_1.
			\end{aligned}
		\end{equation}
		Therefore, on the one hand, the estimate \rf{4.7.1-1} is true if BSDE \rf{bsde12.15} admits a solution. 
		On the other hand, from \autoref{th 3.2}, it is easy to see that 
		$\{(Y^n,Z^n)\}$  is a Cauchy sequence, which implies that BSDE \rf{bsde12.15} admits a  solution  on the interval $[T-\varepsilon,T]$. Repeating the above procedure in a finite number of steps, we obtain the existence result on the whole interval $[0, T]$. Moreover, the proof of uniqueness is similar to that of  \autoref{th 3.5}.
		
		Finally,   for the general mean-field BSDE \rf{MFBSDE}, since $g$ has a bounded growth in  $(y, \mu_1)$,  the proof is almost identical to the above.
	\end{proof}

	\begin{remark}\rm 
		In \autoref{th 3.5-12}, the smallness assumption on $\gamma_0$ is dispensed with  if the law term of $Z$ is additive and exhibits a quadratic growth.
	\end{remark}
	
	Let us consider the following mean-field BSDE with quadratic growth in both $Z$ and $\dbP_Z$:
	\begin{equation}\label{equ 3.200}
		Y_t=\eta+\int_{t}^T\left[g_1(s,Z_s)+g_2(s,\dbP_{Z_s})\right]ds-\int_{t}^TZ_s\, dW_s, \q t\in[0,T]. 
	\end{equation}
	Here, the two adapted fields  $g_1:\O\ts [0,T]\ts \dbR^d\ra \dbR$ and $g_2:\O\ts [0,T]\ts \cP_2(\dbR^d)\ra \dbR$ satisfy the following assumption.

	\bas{ass 3.11111}\rm   There is a  positive constant $C$ such that for $d\dbP\times dt$-almost all  $(\o,t)\in\Om\ts[0,T]$,  the following are satisfied:
	\begin{enumerate}[~~\,\rm (i)]
		\item  $|g_1(w,t,z)|\les \theta_t(\omega)+\frac{1}{2}\gamma|z|^2$ and $
		|g_2(w,t,\m_2)|\les  \theta_t(\omega) 
		+\gamma_0\cW_2(\mu_2,\d_{\{0\}})^{2}$ for $\forall (z, \mu_2)\in\mathbb{R}^d\times \mathcal{P}_2(\mathbb{R}^{d})$. 
		\medskip
		\item  
		$|g_1(t,z)-g_1(t,\bar{z})|\leq C(1+|z|+|\bar{z}|)|z-\bar{z}|$ for $\forall (z,\bar{z})\in \dbR^d\times \dbR^d.$ 
		\medskip
		\item   $\|\eta\|_\infty\les K_1\q\hb{and}\q \left\|\int_0^T\theta_tdt\right\|_\infty\les K_2$ for  two positive constants $K_1$ and $K_2$. 
	\end{enumerate}\eas

	\begin{proposition}\label{24.1.13}
		Let \autoref{ass 3.11111} be satisfied. Then the equation (\ref{equ 3.200}) admits 
		a unique global solution $(Y,Z)\in S_\dbF^\infty(0,T;\dbR)\ts \cZ^2_\dbF(0,T;\dbR^d)$. 	Furthermore, there exist two positive constants $\tilde M_1$ and $\tilde M_2$,  only depending on $(K_1,K_2,T, {\gamma},\gamma_0,C)$, such that
		\begin{align}\label{4.7.2}
			\|Y\|_{S_\dbF^\infty(0,T)}\leq  \tilde M_1  \q~
			\text{and}\q~  \|Z\|^2_{ \cZ^2_\dbF(0,T)}\leq\tilde  M_2.
		\end{align} 
	\end{proposition}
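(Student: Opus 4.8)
The plan is to exploit the additive structure together with the observation that the law term $g_2(s,\dbP_{Z_s})$ depends on $Z$ only through its (deterministic) marginal law: once $Z$ is known, $g_2$ contributes a purely deterministic drift to $Y$. Consequently the equation decouples, the martingale integrand $Z$ being governed by the genuinely quadratic field $g_1$ alone while $g_2$ merely shifts the $Y$-component. This is precisely why no smallness of $\gamma_0$ is needed here, in sharp contrast with \autoref{th 3.5-12}.

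First I would solve the standard (non--mean-field) quadratic BSDE
$$\hat Y_t=\eta+\int_t^T g_1(s,Z_s)\,ds-\int_t^T Z_s\,dW_s,\qquad t\in[0,T].$$
By \autoref{ass 3.11111}(i)--(iii), the field $g_1$ satisfies the hypotheses of \autoref{le 7.1} (take the frozen pair $(P,Q)\equiv(0,0)$, reading $C$ as the Lipschitz constant there; alternatively invoke the classical result of Kobylanski), so this BSDE has a unique solution $(\hat Y,Z)\in S_\dbF^\infty(0,T;\dbR)\ts\cZ^2_\dbF(0,T;\dbR^d)$ with $\|\hat Y\|_{S_\dbF^\infty(0,T)}\les K_1+K_2+CT$ and, by the estimate \rf{equ 7.3},
$$\|Z\|^2_{\cZ^2_\dbF(0,T)}\les \frac{1}{\gamma^2}e^{2\gamma K_1}+\frac{2}{\gamma}e^{2\gamma(K_1+K_2+CT)}(K_2+CT)=:\tilde M_2 .$$
The decisive feature is that $\tilde M_2$ depends only on $(K_1,K_2,T,\gamma,C)$ and \emph{not} on $\gamma_0$. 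Next I would reinsert the law term: since $\dbP_{Z_s}\in\cP_2(\dbR^d)$ is deterministic, $s\mapsto g_2(s,\dbP_{Z_s})$ is a deterministic function with $|g_2(s,\dbP_{Z_s})|\les\theta_s+\gamma_0\dbE|Z_s|^2$, so that $\rho(t):=\int_t^T g_2(s,\dbP_{Z_s})\,ds$ is well defined with $\|\rho\|_\infty\les K_2+\gamma_0\tilde M_2$. Setting $Y_t:=\hat Y_t+\rho(t)$, the process $Y$ is adapted (as $\rho$ is deterministic), satisfies $Y_T=\eta$ since $\rho(T)=0$, and a direct substitution shows that $(Y,Z)$ solves \rf{equ 3.200}. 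This yields \rf{4.7.2} with $\tilde M_1:=K_1+2K_2+CT+\gamma_0\tilde M_2$; note that $\tilde M_1$ is finite for every $\gamma_0$, so again no smallness is imposed.

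For uniqueness, suppose $(Y,Z)$ and $(Y',Z')$ both solve \rf{equ 3.200}. Subtracting the corresponding deterministic shifts shows that $\hat Y_t:=Y_t-\int_t^T g_2(s,\dbP_{Z_s})\,ds$ and $\hat Y'_t:=Y'_t-\int_t^T g_2(s,\dbP_{Z'_s})\,ds$ both solve the $g_1$-BSDE with the \emph{same} terminal value $\eta$. The uniqueness asserted in \autoref{le 7.1} then forces $Z=Z'$ and $\hat Y=\hat Y'$; since $\dbP_{Z_s}=\dbP_{Z'_s}$ the two shifts coincide, whence $Y=Y'$. It is worth stressing that this argument requires no continuity or Lipschitz property of $g_2$ in the measure argument, because $Z$ is pinned down by the quadratic field $g_1$ before the law term ever enters.

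The main obstacle is conceptual rather than computational: one must recognize that the additive, law-only dependence of $g_2$ lets the shift $\rho(t)=\int_t^T g_2(s,\dbP_{Z_s})\,ds$ be deterministic, hence adapted, while preserving the terminal condition through $\rho(T)=0$. This is exactly what keeps $\gamma_0$ out of the equation for $Z$ and thereby suppresses the exponential feedback of $\gamma_0$ into $\|Z\|_{\cZ^2_\dbF}$ that, in the sub-quadratic setting of \autoref{th 3.5-12}, necessitates a smallness hypothesis. (If $g_2$ were genuinely $\o$-dependent the shift would cease to be adapted; one would then iterate on the law flow and close the argument using the uniform, $\gamma_0$-independent bound $\tilde M_2$ on $Z$ obtained above together with the stability of quadratic BSDEs.)
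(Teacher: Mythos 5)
Your proposal is correct and follows essentially the same route as the paper's own proof: solve the $g_1$-only quadratic BSDE via \autoref{le 7.1}, add the shift $\int_t^T g_2(s,\dbP_{Z_s})\,ds$ to produce a solution of \rf{equ 3.200}, and obtain uniqueness by reversing the shift and invoking uniqueness for the $g_1$-equation. Your explicit tracking of the constants and of the adaptedness caveat (the shift is adapted because $\dbP_{Z_s}$ is deterministic) only makes explicit what the paper leaves implicit.
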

	\begin{proof}
		First,  \autoref{le 7.1}  implies that the following equation 
		\begin{equation*}
			\bar{Y}_t=\eta+\int_{t}^{T}g_1(s,Z_s)\, ds-\int_{t}^{T}Z_s\, dW_s,\q t\in[0,T],
		\end{equation*}
		admits a unique solution  $(\bar Y,Z)\in S_\dbF^\infty(0,T;\dbR)\ts \cZ^2_\dbF(0,T;\dbR^d)$ such that  
		$$	\|\bar Y\|_{S_\dbF^\infty(0,T)}\leq  \tilde M_1  \q~
		\text{and}\q~  \|Z\|^2_{ \cZ^2_\dbF(0,T)}\leq\tilde  M_2.$$
		Setting  
		\begin{equation*}
			Y_t:=\bar{Y}_t+\int_t^Tg_2(s,\dbP_{Z_s})\, ds,\q t\in[0,T],
		\end{equation*}
		we see that the pair $(Y,Z)$ is a solution of BSDE (\ref{equ 3.200}). Moreover, we have
		\begin{align*}
			|Y_t|& \les \|\bar Y\|_{S_\dbF^\infty(0,T)}
			+\int_t^T \left(|\theta_s(\omega)|+\gamma_0 \dbE |Z_s|^2 \right)\, ds	\les \tilde M_1 +K_2+\tilde M_2.
		\end{align*}
		Thus,   $\|Y\|_{S_\dbF^\infty(0,T)}\leq \tilde M_1 +K_2+\tilde M_2$.
		
		We now consider  the uniqueness. Let $(Y^i,Z^i),\  i=1,2$ be two solutions to the equation (\ref{equ 3.200}).
		Define 
		\begin{equation*}
			\bar{Y}^i_t:=Y^i_t-\int_t^Tg_2(s,\dbP_{Z^i_s})\, ds, \quad t\in [0,T]; \quad i=1,2.
		\end{equation*}
		Then, the pair of processes $(\bar{Y}^i,{Z}^i )$ satisfy the following equation 
		\begin{equation*}
			\bar{Y}^i_t=\eta+\int_{t}^{T}g_1(s,Z^i_s)\, ds-\int_{t}^{T}Z^i_s\, dW_s,\q t\in[0,T]; \quad i=1,2.
		\end{equation*}
		Since  BSDE \rf{equ 7.1} has a unique solution, we have  $\bar{Y}^1=\bar{Y}^2$ and  $Z^1=Z^2$ on $[0,T]$. Thus ${Y}^1={Y}^2$  on $[0,T]$. The proof is complete.
	\end{proof}
	% }
%------------2023.12.16-----$ms$ or not

%%%%%%%%%%%%%%%%%%%%%%%
\vspace{-3mm}

\section{Comparison Theorem}\label{Sec4}

In this section, we present a comparison theorem for the solutions of the mean-field BSDE (\ref{MFBSDE}) with quadratic growth. To simplify the presentation, we will refer to BSDE \rf{MFBSDE} as the BSDE \rf{MFBSDE} with parameters $(g, \eta)$, which indicates that the generator is $g$ and the terminal value is $\eta$. 
%we rewrite it as follows:
%%
%\begin{equation}\label{3.1.2}
%	  Y_t=\eta+\int_t^Tg(s,Y_s,Z_s, \mathbb{P}_{Y_s},\mathbb{P}_{Z_s})ds-\int_t^TZ_sdW_s,\q 0\les t\les T. \end{equation}
%%
%%
%
%
\begin{theorem}[Comparison principle]\label{th 4.1}
Let the parameters $(g,\eta)$ and $(\bar g,\bar{\eta})$ satisfy
\autoref{ass 3.2}.
Denote by $(Y,Z)$ and $(\bar{Y},\bar{Z})$ the global adapted solutions of mean-field BSDE
(\ref{MFBSDE}) with parameters $(\eta, g)$ and $(\bar{\eta},\bar{g})$, respectively.
In addition, suppose that
\begin{enumerate}[~~\,\rm (i)]
	\item  One of both generators $g$ and $\bar g$ does not depend on $\mu_2$.
	\item  The other one of both generators $g$ and $\bar g$ is nondecreasing with respect to
	$\mu_1$ in the following sense: there exists a positive constant
	$K$ such that
	for every $(\o,t)\in\Om\ts[0,T]$, $y\in \dbR, z\in \dbR^d$, $\xi,\bar{\xi}\in L^2_{\sF_t}(\Om;\dbR)$ and $\zeta\in L^2_{\sF_t}(\Om;\dbR^d)$,
	$$
	|g(\o,t,y,z,\dbP_{\xi},\dbP_{\zeta})-g(\o,t,y,z,\dbP_{\bar{\xi}},\dbP_{\zeta})|\leq
	K \|(\xi-\bar{\xi})^+\|_{L^2(\Om)}.
	$$
\end{enumerate}
Then, if $\eta\leq \bar{\eta}$ and $g\leq  \bar{g}$,
$\dbP$-a.s., we have that for every $t\in[0,T]$,
$$Y_t\leq \bar{Y}_t,\q \dbP\hb{-a.s.}$$
\end{theorem}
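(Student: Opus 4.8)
The plan is to reduce this nonlinear comparison to a \emph{linear} one by telescoping the difference of the two generators and absorbing the quadratic ($z$-)part through a Girsanov change of measure, exactly as in the uniqueness argument for \autoref{th 3.5}. Set $\Delta Y:=Y-\bar Y$ and $\Delta Z:=Z-\bar Z$. Since both pairs are bounded global solutions, \autoref{th 3.5} gives $Y,\bar Y\in S_\dbF^\infty(0,T;\dbR)$ and $Z\cd W,\bar Z\cd W\in BMO(\dbP)$; this boundedness is what makes every coefficient below well controlled.

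Assume first that $\bar g$ is the generator independent of $\mu_2$ and $g$ is the one monotone in $\mu_1$ (the symmetric case is identical after interchanging roles). I would decompose $g(s,Y_s,Z_s,\dbP_{Y_s},\dbP_{Z_s})-\bar g(s,\bar Y_s,\bar Z_s,\dbP_{\bar Y_s},\dbP_{\bar Z_s})$ by a four-term telescope chosen so that the law-of-$Z$ slot never moves: a $\mu_1$-step of $g$ at frozen $\dbP_{Z_s}$, bounded above by $K\|(\Delta Y_s)^+\|_{L^2(\Om)}$ via hypothesis (ii); a generator-comparison step $g-\bar g$ at frozen arguments, which is $\les 0$ because $g\les\bar g$ and $\bar g$ ignores $\mu_2$; a $y$-step of $\bar g$, equal to $a_s\Delta Y_s$ with $|a_s|\les K$ from the $y$-Lipschitz part of \autoref{ass 3.2}(ii); and a $z$-step of $\bar g$, equal to $\Lambda_s\Delta Z_s$, where $|\Lambda_s|\les\phi(\cd)(1+|Z_s|+|\bar Z_s|)$. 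Because $\|Y\|_{S_\dbF^\infty(0,T)}\vee\|\bar Y\|_{S_\dbF^\infty(0,T)}$ is finite and $Z,\bar Z\in\cZ^2_\dbF(0,T;\dbR^d)$, the process $\Lambda\cd W$ lies in $BMO(\dbP)$, so $d\dbQ:=\sE(\Lambda\cd W)_0^T\,d\dbP$ defines an equivalent measure under which $\widetilde W_s:=W_s-\int_0^s\Lambda_r\,dr$ is a Brownian motion and $\Delta Z\cd\widetilde W\in BMO(\dbQ)$ by \autoref{2.1.11}.

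Under $\dbQ$ the equation for $\Delta Y$ becomes the linear BSDE
$$\Delta Y_t=(\eta-\bar\eta)+\int_t^T\big(a_s\Delta Y_s+f_s\big)\,ds-\int_t^T\Delta Z_s\,d\widetilde W_s,$$
where $f_s$ collects the first two telescope terms, hence $f_s\les K\|(\Delta Y_s)^+\|_{L^2(\Om)}$. Solving with the integrating factor $\Gamma^t_s:=\exp\big(\int_t^s a_r\,dr\big)\in(0,e^{KT}]$ and taking $\dbE^\dbQ_t[\cd]$ (legitimate since $\Delta Z\cd\widetilde W\in BMO(\dbQ)$ is a genuine $\dbQ$-martingale) yields
$$\Delta Y_t=\dbE^\dbQ_t\Big[\Gamma^t_T(\eta-\bar\eta)+\int_t^T\Gamma^t_s f_s\,ds\Big]\les Ke^{KT}\int_t^T\|(\Delta Y_s)^+\|_{L^2(\Om)}\,ds,$$
because $\eta-\bar\eta\les0$ and the surviving integrand is dominated by a \emph{deterministic} quantity that factors out of the conditional expectation. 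The right-hand side is deterministic, so $(\Delta Y_t)^+$ is dominated pointwise by it; taking $L^2(\Om)$-norms gives the closed backward Gronwall inequality $\psi(t)\les Ke^{KT}\int_t^T\psi(s)\,ds$ for $\psi(t):=\|(\Delta Y_t)^+\|_{L^2(\Om)}$, whence $\psi\equiv0$ and $Y_t\les\bar Y_t$ a.s.

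The main obstacle is structural rather than computational: the law-of-$Z$ argument $\mu_2$ admits no usable monotonicity and carries quadratic growth, so the telescope must be arranged so that the only generator touching $\mu_2$ is evaluated at the \emph{same} $\dbP_{Z_s}$ on both sides and thus drops out — this is precisely why hypothesis (i) is indispensable. The second delicate point is that the mean-field contribution from hypothesis (ii) is controlled by the \emph{deterministic} quantity $\|(\Delta Y_s)^+\|_{L^2(\Om)}$ (the one-sided, positive-part bound, sharper than the $\cW_2$ estimate in \autoref{ass 3.2}); this is exactly what lets the conditional-$\dbQ$-expectation bound collapse to a deterministic pointwise estimate and sidesteps any discrepancy between the $\dbP$- and $\dbQ$-norms.
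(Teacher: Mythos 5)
Your proof is correct and follows essentially the same route as the paper's: the same telescoping linearization of the generator difference (with the roles of $g$ and $\bar g$ interchanged, which is symmetric), the same Girsanov transform absorbing the quadratic $z$-increment, the same integrating-factor representation under $\dbQ$, and the same key observation that the mean-field term is controlled by the deterministic quantity $\|(\Delta Y_s)^+\|_{L^2(\Om)}$, which passes through the conditional $\dbQ$-expectation and closes a Gronwall argument. The only cosmetic difference is that you apply backward Gronwall directly to $\psi(t)=\|(\Delta Y_t)^+\|_{L^2(\Om)}$, whereas the paper first squares the estimate via H\"older's inequality and then applies Gronwall to $\dbE^{\dbP}[((\delta Y_t)^+)^2]$.
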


Before presenting the proof of the comparison theorem, we would like to provide some remarks.

\begin{remark} \rm
If either the generator $g$ depends on $\dbP_Z$ or $g$ decreases with respect to $\dbP_Y$, then the comparison theorem will not hold.  Counterexamples illustrating this can be found in Buckdahn, Li, and Peng \cite[Example 3.1, Example 3.2]{BLP}. 
\end{remark}

\begin{remark}\rm 
Under the scenario where the mean-field term in the generator appears as an expectation, Buckdahn, Li, and Peng \cite[Example 3.2 and Theorem 3.2]{BLP} impose a condition called $\mathbf{Condition \ (L)}$ for the comparison property of mean-field BSDEs, which can be restated as follows:
$$
\begin{array}{ll}
	&\mathbf{Condition \ (L)}\q
	\hb{the generator $g$ (or $\bar g$) is nondecreasing in  the mean-field term $y'$.}
\end{array}
$$
One way to interpret this condition is that if the derivative of $g$ (or $\bar g$) with respect to the mean-field term $y'$ exists, it should not be less than zero. 
In the context of measure-dependence, assumption $\mathrm{(ii)}$ in \autoref{th 4.1} serves as an equivalent assumption to $\mathbf{Condition \ (L)}$.

\end{remark}

\begin{remark} \rm 
Here are  sufficient conditions for  $\mathrm{(ii)}$ in \autoref{th 4.1}.
Assume that the function $g(\o, t,y,z,\cdot,\mu_2)$ is differentiable with respect to
$\mu_1$ for every $(\o,t,y,z,\mu_2)\in\Om\ts[0,T]\times\dbR\times \dbR^d\times \mathcal{P}_2(\dbR^d)$.
If there is a positive constant $K$ such that for every $(\o,t,y,z,\mu_1,\mu_2, a)\in\Omega\times [0,T]\times \dbR\times \dbR^d\times\mathcal{P}_2(\dbR)\times \mathcal{P}_2(\dbR^d)\times \dbR$,
$$
0\leq \partial_{\mu_1}g(s,y,z,\mu_1, \mu_2;a)\leq K,
$$
then the assumption $\mathrm{(ii)}$ in \autoref{th 4.1} holds true. For more details,  see Li, Liang, and Zhang \cite[Remark 2.2]{LLZ}.
\end{remark}

\begin{proof}[Proof of \autoref{th 4.1}]
Without loss of generality, we assume that $g$ satisfies the item (i) and $\bar g$ satisfies the item (ii).
Set for every $s\in[t,T]$,
\begin{align*}
	&\delta Y_s= Y_s-\bar{Y}_s,\q~ \delta Z_s=Z_s-\bar{Z}_s,\q~ \delta \eta=\eta-\bar{\eta},\\
	&\delta g(s)= g(s,\bar{Y}_s,\bar{Z}_s,\dbP_{Y_s})-\bar{g}(s,\bar{Y}_s,\bar{Z}_s,\dbP_{Y_s},\dbP_{\bar{Z}_s}).
\end{align*}
Then,
\begin{equation}\label{equ 4.1}
	\begin{aligned}
		\delta Y_t=&\delta \eta+\int_t^T \big[g(s,Y_s,Z_s,\dbP_{{Y_s}})
		-\bar{g}(s,\bar{Y}_s,\bar{Z}_s,\dbP_{\bar{Y}_s},\dbP_{\bar{Z}_s})\big]ds\\
		&-\int_t^T\delta Z_sdW_s,\q 0\les t\les T.
	\end{aligned}
\end{equation}
On the generator of BSDE \rf{equ 4.1}, we have that for $s\in[t,T]$,
\begin{equation*}\label{equ 4.2}
	\begin{aligned}
		&g(s,Y_s,Z_s,\dbP_{Y_s})-\bar{g}(s,\bar{Y}_s,\bar{Z}_s,\dbP_{\bar{Y}_s},\dbP_{\bar{Z}_s})\\
		&=\d_yg(s)\delta Y_s+\d_zg(s)\delta Z_s+\d_\mu\bar{g}(s)\|(\d Y_s)^+\|_{L^2(\Om)}+\d g(s),
	\end{aligned}
\end{equation*}
where
$$\d_yg(s)=\left\{\ba{ll}
\frac{g(s,Y_s,Z_s,\dbP_{Y_s})-g(s,\bar{Y}_s,Z_s,\dbP_{Y_s})}{Y_s-\bar{Y}_s},&\quad \text{if }\ Y_s\neq\bar{Y}_s;\\
0                                                                    ,&\quad \text{if }\ Y_s=\bar{Y}_s,
\ea\right.$$
$$\d_zg(s)=\left\{\ba{ll}
\frac{(g(s,\bar{Y}_s,Z_s,\dbP_{Y_s})-g(s,\bar{Y}_s,\bar{Z}_s,\dbP_{Y_s}))(Z_s-\bar{Z}_s)

}{|Z_s-\bar{Z}_s|^2},&\quad \text{if }\ Z_s\neq\bar{Z}_s;\\
0                                                                    ,&\quad \text{if }\ Z_s=\bar{Z}_s,
\ea\right.$$
and
$$\d_\mu\bar{g}(s)=\left\{\ba{ll}
\frac{\bar{g}(s,\bar{Y}_s,\bar{Z}_s,\dbP_{Y_s},\dbP_{\bar{Z}_s})
	-\bar{g}(s,\bar{Y}_s,\bar{Z}_s,\dbP_{\bar{Y}_s}, \dbP_{\bar{Z}_s})}
{\|(\delta Y_s)^+\|_{L^2(\Om)}},&\quad \text{if }\ \|(\delta Y_s)^+\|_{L^2(\Om)}\neq0;\\
0  ,&\quad \text{if }\ \|(\delta Y_s)^+\|_{L^2(\Om)}=0.
\ea\right.$$
From \autoref{ass 3.2}, the item $\mathrm{(ii)}$ of \autoref{th 4.1}  and  \autoref{th 3.5}, we have
\begin{align}\label{22.4.27.1} 
	& |\d_yg(s)|\leq K,\q~ |\d_\mu\bar{g}(s)|\leq K,\q~ 0\les t\les s\les T,
\end{align}
and
\begin{align*}
	&|\d_zg(s)|\leq \phi(M_1)(1+|Z_s|+|\bar{Z}_s|),\q~ 0\les t\les s\les T,
\end{align*}
where $M_1$ is given in (\ref{4.7.0}).
Moreover, since both $Z$ and $\bar{Z}$ belong to the space $\cZ^2_{\dbF}[0,T]$,  we have $\d_zg\in \cZ^2_{\dbF}[0,T]$. Define that for $ t\in [0,T]$,
$$\widetilde{W}_t=W_t-\int_0^t\d_zg(r)dr\q  \hb{and}\q d\dbQ =\sE(\d_z g\cd W)_0^Td\dbP.$$
%
%with
%%
%$$\sE(\d_z g\cd W)_t^s\deq\exp\left\{\int_t^s\d_zg(r)dW_r-\frac{1}{2}\int_t^s|\d_zg(r)|^2dr\right\},\q~ 0\les t\les s\les T.$$
%
Then,  $\dbQ$ is a probability measure equivalent to $\dbP$ and  $\widetilde{W}$ is a Brownian motion under $\dbQ$.
So BSDE (\ref{equ 4.1})  can be rewritten as
\begin{equation*}\left\{
	\begin{aligned}
		d\delta Y_t=&\ -\Big[
		\big(\d_yg(t)\delta Y_t+\d_\mu\bar{g}(t)\|(\d Y_t)^+\|_{L^2(\Om)}+\d g(t)
		\Big]dt+\delta Z_td\wW_t,\q0\les t< T;\\
		\delta Y_T=&\ \delta \eta.
	\end{aligned}\right.
\end{equation*}
Applying It\^{o}'s formula to $e^{\int_t^s\delta_yg(r)dr}\delta Y_s$, we have
\begin{equation*}\label{equ 4.3}
	\begin{aligned}
		\delta Y_t&=\mathbb{E}^\mathbb{Q}_t[e^{\int_t^T\delta_yg(r)dr}\delta \eta]
		+\mathbb{E}^\mathbb{Q}_t\[\int^T_te^{\int_t^s\delta_yg(r)dr}\(\delta_\mu \bar{g}(s)||(\delta Y_s)^+||_{L^2(\Omega)}
		+\delta g(s)\)ds\].
	\end{aligned}
\end{equation*}
Now, since $\delta \eta\leq0$  and $\delta g(s)\leq0$, we have
\begin{equation*}\label{equ 4.4}
	\begin{aligned}
		\delta Y_t&\leq
		\mathbb{E}^\mathbb{Q}_t\[\int^T_te^{\int_t^s\delta_yg(r)dr}\delta_\mu \bar{g}(s)||(\delta Y_s)^+||_{L^2(\Omega)}ds\].
	\end{aligned}
\end{equation*}
According to the fact (\ref{22.4.27.1}),  using H\"{o}lder's inequality, we have
\begin{equation*}\label{equ 4.5}
	\begin{aligned}
		\delta Y_t&\leq e^{KT}K\sqrt{T}
		\Big\{\int^T_t\mathbb{E}^\mathbb{P}[((\delta Y_s)^+)^2]ds\Big\}^\frac{1}{2}.
	\end{aligned}
\end{equation*}
Hence,
\begin{equation*}\label{equ 4.6}
	\begin{aligned}
		((\delta Y_t)^+)^2&\leq e^{2KT}K^2T\int^T_t\mathbb{E}^\mathbb{P}[((\delta Y_s)^+)^2]ds.
	\end{aligned}
\end{equation*}
Taking the expectation $\mathbb{E}^\mathbb{P}$ on both sides of the last
inequality, we have the desired result  from  Gronwall's lemma.
\end{proof}

\begin{remark} \rm 
\autoref{th 4.1} generalizes a related result by Hao, Wen, and Xiong \cite{Hao-Wen-Xiong-22}, where the generator depends on the expectations of $(Y,Z)$. Furthermore, it also generalizes the comparison theorem of Buckdahn, Li, and Peng \cite{BLP}, where the generator $g$ grows linearly with respect to $Z$.   %
\end{remark}

\begin{remark} \rm As in the proof of  \autoref{th 4.1}, it can be verified that \autoref{th 4.1} remains valid under  Assumptions~\ref{ass 3.1} and  \ref{ass 3.2-11}.
\end{remark}

%%%%%%%%%%%%%%%%%%%%%%%%%%%%%%%%%%%%%%%%%%%%%%%%%%%%%%%%%%%%%%%%%%%%%%%%%%%%%

\section{Particle systems}\label{Sec7}

In this section, we  study the particle systems for the mean-field BSDE \rf{MFBSDE} with quadratic growth in two situations: (i) the generator is bounded in the law of $Z$; (ii)  the generator is unbounded in the law of $Z$. We will conduct a detailed analysis of the convergence of the particle systems and provide the convergence rate.

Let  $\{\eta^i;1\leq i\leq N\}$ be $N$ independent copies of $\eta$, and
let $\{W^j;1\leq j\leq N\}$ be $N$ independent $d$-dimensional Brownian motions.
Denote by $(Y^i,Z^{i,j})$ and $(\bar{Y}^i,\bar{Z}^i)$ the adapted solutions to the following BSDEs
\begin{align}
Y^i_t&=\eta^i+\int_t^Tg(s,Y^i_s,Z^{i,i}_s, \nu_s^N,\mu^N_s )ds-\int_t^T\sum\limits_{j=1}^NZ^{i,j}_sdW^j_s,\q t\in[0,T],
\label{8.2}
\end{align}
and
\begin{align}
\bar{Y}^i_t=\eta^i+\int_t^Tg(s,\bar{Y}^i_s,\bar{Z}^{i}_s, \bar{\nu}_s,\bar{\mu}_s)ds-\int_t^T\bar{Z}^{i}_sdW^i_s,\q t\in[0,T], \label{8.2.2}
\end{align}
respectively, where for each $i,j=1,...,N$, $Z^{i,j}$ is a $1\times d$ matrix, and
\begin{equation}\label{24.1.24}
\nu^N_s\deq \frac{1}{N}\sum\limits_{i=1}^N\delta_{Y^i_s},\q
\mu^N_s\deq \frac{1}{N}\sum\limits_{i=1}^N\delta_{Z^{i,i}_s},\q
\bar{\nu}_s\deq \mathbb{P}_{\bar{Y}^i_s},\q
\bar{\mu}_s\deq \mathbb{P}_{\bar{Z}^i_s}.
\end{equation}
%
%In what follows,  we denote $(\bar{\nu}^1,\bar{\mu}^1)=(\bar{\nu}^2,\bar{\mu}^2)=\cdot\cdot\cdot=(\bar{\nu}^N,\bar{\mu}^N):=(\bar{\nu},\bar{\mu})$,
%if there is no confusion.
%
We shall show that the pair $(Y^i,Z^{i,j})$ is close to the pair $(\bar{Y}^i,\bar{Z}^i)$. For simplicity, we denote that for  $i,j=1,...,N$,
\begin{equation}\label{22.8.13.1}
\Delta Y^i=Y^i-\bar{Y}^i,\q
\Delta Z^{i,j}=Z^{i,j}-\bar{Z}^{i,j}\q\hb{with}\q
\bar{Z}^{i,j}=
\left\{\ba{ll}
\bar{Z}^{i},\q\ i=j;\\
0,\qq i\neq j.
\ea\right.
\end{equation}

\subsection{The case that the generator is bounded in the law of $Z$}

Here, in order to better express our ideas, we prefer to present the following assumption, which is slightly stronger than \autoref{ass 3.2}.
\bas{ass 3.3-0}\rm  For $i=1,2,\cdot\cdot\cdot,N$, there exists a positive constant $K$ such that the terminal value $\eta^i:\Om\rightarrow\dbR$ and the generator $g: \Omega\times[0,T]\times\dbR\times\dbR^d\times \cP_2(\dbR)\times \cP_2(\dbR^{d})\rightarrow \dbR$, adapted the filtration of
$\dbF^i$, where $\dbF^i=\{\sF^i_t\}_{t\ges0}$ is the natural filtration of $W^i$ augmented by all the $\dbP$-null sets in $\sF^i$,
satisfy the following conditions:
\begin{enumerate}[~~\,\rm (i)]
\item $d\dbP\times dt$-a.e. $(\o,t)\in\Om\ts[0,T]$, for every $(y, z,\nu,\mu)\in \mathbb{R}\ts\mathbb{R}^d\ts\mathcal{P}_2(\mathbb{R})\ts\mathcal{P}_2(\mathbb{R}^{d}),$
\begin{equation*}
	|g(w,t,y,z,\nu,\mu)|\les \theta_t(\omega)+K|y|+\frac{\gamma}{2}|z|^2+K\cW_2(\nu,\d_{\{0\}}).
\end{equation*}

\item   $d\dbP\times dt$-a.e. $(\o,t)\in\Om\ts[0,T]$, for every $y,\by\in \dbR, z,\bz\in \dbR^d$ and $\nu,\bar{\nu}\in \cP_2(\dbR)$,
$\m,\bar{\m}\in \cP_2(\dbR^{d})$,
$$\ba{ll}
& |g(\o,t,y,z,\nu,\m)-g(\o,t,\bar y,\bz,\bar\nu,\bar\mu)| \les K\big[|y-\bar{y}|+\cW_2(\nu,\bar{\nu})
+\cW_2(\mu,\bar{\mu})\big]\\
&\q + \phi\big(|y|\vee|\bar y|\vee\cW_2(\nu,\d_{\{0\}})\vee\cW_2(\bar \nu,\d_{\{0\}})\big) \cdot \big[(1+|z|+|\bar{z}|)|z-\bar{z}|\big].
\ea$$
\item  There are two positive constants $K_1$ and $K_3$ such that
$$\max_{1\les i\les N}\|\eta^i\|_\infty\les K_1\q\hb{and}\q \bigg\|\int_0^T|\theta_t(\omega)|^2dt\bigg\|_\infty\les K_3.$$
\end{enumerate}\eas

Similar to \rf{4.7.1},  the solution $(Y^i, Z^{i,j})$ of BSDE (\ref{8.2}) has the following property.

\begin{proposition}\label{22.8.29.1} \it 
Under \autoref{ass 3.3-0}, there is a positive constant $C$ depending only on $(K,K_1,K_3,T,\g)$, such that the solution $(Y^i, Z^{i,j})$ of BSDE (\ref{8.2}) admits the following estimate: for each $i,j=1,...,N$,
\begin{equation}\label{5.111}
	||Y^i||_{S_\dbF^\infty(0,T)}\leq C\q\hbox{and}\q   ||Z^{i,j}||_{\cZ^2_\dbF(0,T)}\leq C.
\end{equation}
\end{proposition}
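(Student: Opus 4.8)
The plan is to reproduce, for the coupled $N$-particle system, the two-step scheme used for \autoref{th 3.5}: first the $S_\dbF^\infty$-bound on each $Y^i$ via a Girsanov change of measure together with It\^o's formula applied to $|Y^i|^2$, and then the $\cZ^2_\dbF$-bound via the exponential test function $\Phi$ of \rf{22.8.29.3}. Since under \autoref{ass 3.3-0} the generator has only a bounded (Lipschitz) dependence on the law of $Z$, the empirical measure $\mu^N_s$ never enters a growth bound and requires no control; the whole coupling is carried by $\nu^N_s$, for which the key observation is
$$\cW_2(\nu^N_s,\d_{\{0\}})^2=\frac1N\sum_{k=1}^N|Y^k_s|^2\les \max_{1\les k\les N}\|Y^k\|^2_{S_\dbF^\infty(s,T)},$$
so that the mean-field term is dominated by a \emph{deterministic} quantity. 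The overriding requirement is that every constant be independent of $N$, which forces us to close the estimates through the maximum over the $N$ particles rather than through any single equation.

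\emph{Step 1 (the $Y$-bound).} For each fixed $i$ I would linearize the diagonal $z$-dependence of $g$ at $(y,\nu)=(0,\d_{\{0\}})$, choosing $\Lambda^i_s$ with $g(s,0,Z^{i,i}_s,\d_{\{0\}},\mu^N_s)-g(s,0,0,\d_{\{0\}},\mu^N_s)=\Lambda^i_sZ^{i,i}_s$; by the item (ii) of \autoref{ass 3.3-0} one has $|\Lambda^i_s|\les\phi(0)(1+|Z^{i,i}_s|)$, so $\Lambda^i\cd W^i$ is a BMO martingale and $d\widetilde\dbP^i:=\sE(\Lambda^i\cd W^i)_0^Td\dbP$ defines an equivalent measure under which $\widetilde W^i_t:=W^i_t-\int_0^t\Lambda^i_sds$ and the remaining $W^j$ ($j\neq i$) are Brownian motions. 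Rewriting \rf{8.2} under $\widetilde\dbP^i$ and applying It\^o's formula to $|Y^i|^2$, the diagonal quadratic term is cancelled, and items (i)--(ii) of \autoref{ass 3.3-0} leave a drift controlled by $K[|Y^i_s|+\cW_2(\nu^N_s,\d_{\{0\}})]+\theta_s$. Taking $\widetilde\dbE^i_t[\cd]$, using the displayed identity together with $\max_i\|\eta^i\|_\infty\les K_1$ and $\|\int_0^T|\theta_s|^2ds\|_\infty\les K_3$, and writing $y^*(t):=\max_{1\les i\les N}\|Y^i\|^2_{S_\dbF^\infty(t,T)}$, I would obtain
$$y^*(t)\les K_1^2+K_3+(4K+1)\int_t^Ty^*(s)\,ds,\q t\in[0,T],$$
and Gronwall's lemma yields $y^*(0)\les(K_1^2+K_3)e^{(4K+1)T}=:M_1^2$, a bound depending only on $(K,K_1,K_3,T)$.

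\emph{Step 2 (the $Z$-bound).} With $\|Y^i\|_{S_\dbF^\infty(0,T)}\les M_1$ in hand (hence $\cW_2(\nu^N_s,\d_{\{0\}})\les M_1$ as well), I would apply It\^o--Tanaka's formula to $\Phi(Y^i_t)$. Because the martingale part of $Y^i$ is $\sum_{j=1}^NZ^{i,j}dW^j$, the second-order term is $\frac12\Phi''(Y^i_s)\sum_{j=1}^N|Z^{i,j}_s|^2$, whereas the quadratic growth of $g$ contributes only $\frac{\g}{2}|\Phi'(Y^i_s)|\,|Z^{i,i}_s|^2$ on the diagonal; the identity $\Phi''-\g|\Phi'|=1$ with $\Phi''\ges1$ then bounds the net second-order contribution below by $\frac12\sum_{j=1}^N|Z^{i,j}_s|^2$. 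Taking $\dbE_\tau[\cd]$ for an arbitrary $\tau\in\sT[0,T]$, invoking the item (i) of \autoref{ass 3.3-0} and the Step-1 bounds, I would reach
$$\frac12\dbE_\tau\int_\tau^T\sum_{j=1}^N|Z^{i,j}_s|^2ds\les\Phi(K_1)+|\Phi'(M_1)|\big(\sqrt{K_3T}+2KM_1T\big)=:\tfrac12M_2,$$
whence $\sum_{j=1}^N\|Z^{i,j}\|^2_{\cZ^2_\dbF(0,T)}\les M_2$ and in particular $\|Z^{i,j}\|_{\cZ^2_\dbF(0,T)}\les\sqrt{M_2}$ for all $i,j$. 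Setting $C:=\max\{M_1,\sqrt{M_2}\}$ gives \rf{5.111}.

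The main obstacle is the uniformity in $N$: the $N$ equations are coupled through $\nu^N$, so none can be estimated in isolation, and a naive per-equation bound would carry $N$-dependent constants. The resolution is precisely the displayed domination of $\cW_2(\nu^N_s,\d_{\{0\}})$ by the deterministic maximum $y^*(s)$, which lets the backward Gronwall argument close with constants depending only on $(K,K_1,K_3,\g,T)$. A secondary technical care is the mismatch between the single diagonal entry $Z^{i,i}$ that carries the quadratic growth and the full martingale $\sum_jZ^{i,j}dW^j$ that carries the energy; this is reconciled by the sign identity $\Phi''-\g|\Phi'|=1$ in Step 2 and by the per-particle Girsanov transform in Step 1, both legitimate because the given solution lies in $S_\dbF^\infty\ts\cZ^2_\dbF$, so the relevant stochastic exponentials and stochastic integrals are genuine (BMO) martingales.
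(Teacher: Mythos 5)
Your proposal is correct and follows essentially the same route as the paper's proof: a per-particle Girsanov transform to absorb the diagonal quadratic dependence, It\^o's formula on $|Y^i|^2$ with the mean-field term dominated through $\cW_2(\nu^N_s,\d_{\{0\}})^2=\frac1N\sum_k|Y^k_s|^2$ and Gronwall closed uniformly over the particles (the paper sums over $i$ where you take the maximum --- an immaterial difference), followed by the exponential test function $\Phi$ of \rf{22.8.29.3} to extract the $N$-independent BMO bound on $Z^{i,j}$. The only element the paper includes that you take for granted is the preliminary invocation of the multidimensional quadratic BSDE theory of \cite{FHT} to guarantee that \rf{8.2} has a solution in $S_\dbF^\infty\ts\cZ^2_\dbF$ before the $N$-uniform estimates are derived.
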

\begin{proof}
%{\color{blue}
	It is easy to check that under \autoref{ass 3.3-0}, the generator $g$ satisfies \cite[(H1)-(H4)]{FHT}. According to \cite[Theorem 2.4]{FHT},
	the equation (\ref{8.2}) admits a unique solution $(Y,Z)\in S_\dbF^\infty(0,T;\dbR)\ts \cZ^2_\dbF(0,T;\dbR^d)$. Next, we show that 
	the bounds of $||Y^i||_{S_\dbF^\infty(0,T)} $ and  $||Z^{i,j}||_{\cZ^2_\dbF(0,T)}$ do not depend on $N$.
	BSDE (\ref{8.2}) can be  written as follows:
	\begin{equation*}
		\begin{aligned}
			Y^i_t&=\eta^i-\int_t^T\sum\limits_{j=1}^NZ^{i,j}_sdW_s^j +\int_t^T\Big[g(s,Y^i_s,Z^{i,i}_s, \nu_s^N,\mu_s^N)-g(s,0,Z_s^{i,i}, \delta_{\{0\}},\mu_s^N)\\
			&\quad+g(s,0,Z_s^{i,i}, \delta_{\{0\}},\mu_s^N) - g(s,0,0,\delta_{\{0\}},\mu_s^N)  +g(s,0,0,\delta_{\{0\}},\mu_s^N)\Big]ds\\
			&=\eta^i-\int_t^T\sum\limits_{j=1}^NZ^{i,j}_sd\widetilde{W}_s^j +\int_t^T\Big[g(s,Y^i_s,Z^{i,i}_s, \nu_s^N,\mu_s^N)-g(s,0,Z_s^{i,i}, \delta_{\{0\}},\mu_s^N)\\
			& \q +g(s,0,0,\delta_{\{0\}},\mu_s^N)\Big]ds,
		\end{aligned}
	\end{equation*}
	where
	$\widetilde{W}^j_t\deq W^j_t-\int_0^t\Gamma(s)ds,\ 0\leq t\leq T$
	is a Brownian motion under $\dbQ^j$ with
	\begin{align*}
		& d\dbQ^j =\sE(\Gamma\cdot W^j)_0^T d\dbP \q \hb{and}\q \Gamma(s)\leq \phi(0)(1+|Z_s^{i,i}|),\\
		&g(s,0,Z_s^{i,i}, \delta_{\{0\}}, \mu_s^N) - g(s,0,0,\delta_{\{0\}}, \mu_s^N)=\Gamma(s)Z_s^{i,i}.
	\end{align*}
	Now, applying It\^{o}'s formula to $|Y^i_t|^2$,  we have
	\begin{equation}
		\begin{aligned}
			&|Y^i_t|^2+\mathbb{E}^{\dbQ^j}_t\Big[\int_t^T\sum\limits_{j=1}^N|Z^{i,j}_s|^2ds\Big]
			=\mathbb{E}^{\dbQ^j}_t[|\eta^i|^2]\\
			&
			+\mathbb{E}^{\dbQ^j}_t\bigg[ \int_t^T2Y_s^i \Big(g(s,Y^i_s,Z^{i,i}_s, \nu_s^N,\mu_s^N)-g(s,0,Z_s^{i,i}, \delta_{\{0\}},\mu_s^N)+
			g(s,0,0,\delta_{\{0\}},\mu_s^N)\Big)ds\bigg].
		\end{aligned}
	\end{equation}
	Thanks to \autoref{ass 3.3-0}, we have
	\begin{equation}\label{equ 4.9}
		\begin{aligned}
			|Y^i_t|^2 &\leq \mathbb{E}^{\dbQ^j}_t[|\eta^i|^2]
			+\mathbb{E}^{\dbQ^j}_t\bigg[ \int_t^T2|Y_s^i| \Big(K |Y^i_s|+K\cW_2(\nu^N_s, \delta_{\{0\}})+\theta_s\Big)ds\bigg]\\
			& \leq \mathbb{E}^{\dbQ^j}_t[|\eta^i|^2]
			+\mathbb{E}^{\dbQ^j}_t\bigg[ \int_t^T\Big((K^2+2K+1)|Y_s^i|^2+\cW^2_2(\nu^N_s, \delta_{\{0\}})+|\theta_s|^2  \Big)ds\bigg]\\
			& \leq K_1^2+K_3+\mathbb{E}^{\dbQ^j}_t\bigg[ \int_t^T \Big((K^2+2K+1)|Y^i_s|^2+ \frac{1}{N} \sum\limits_{i=1}^N|Y^i_s|^2\Big)ds\bigg].
		\end{aligned}
	\end{equation}
	Summing over $i$ on both sides of (\ref{equ 4.9}), using Gronwall's inequality, we have
	$$||Y^i||_{S_\dbF^\infty(0,T)}\leq C,\q i=1,\cdot\cdot\cdot,N,$$
	where $C$ is a positive constant depending only on $(K,K_1,K_3,T)$.
	Next, we prove that the term $Z^{i,j}\cdot W^j$ is a BMO martingale. For this, we let
	\begin{equation*}
		\Phi(x)=\frac{1}{\gamma^2}\big[\exp(\gamma|x|)-\gamma|x|-1\big].
	\end{equation*}
	Then, for BSDE (\ref{8.2}), applying It\^{o}'s formula to $\Phi(Y^i_t)$, we have
	\begin{equation*}\label{22.9.20.1}
		\begin{aligned}
			\Phi(Y^i_t)
			=&\ \Phi(Y^i_T)+\int_{t}^{T}\Phi'(Y^i_s)g\big(s,Y^i_s,Z^i_s,\nu_s^N, \mu_s^N \big)ds \\
			&   -\int_{t}^{T}\Phi'(Y^i_s)\sum\limits_{j=1}^NZ^{i,j}_sdW^j_s
			-\frac{1}{2}\int_{t}^{T}\Phi''(Y^i_s)\sum\limits_{j=1}^N|Z^{i,j}_s|^2ds \nn\\
			\les &\ \Phi(\eta^i)+\int_{t}^{T}  |\Phi'(Y^i_s)|
			\Big(\theta_s+K\big[|Y^i_s|+ \cW_2(\nu_s^N, \delta_{\{0\}})\big]\Big)ds
			\nn\\
			& -\int_{t}^{T}\Phi'(Y^i_s)\sum\limits_{j=1}^NZ^{i,j}_sdW^j_s +\frac{1}{2}\int_{t}^{T}\(\gamma|\Phi'(Y^i_s)| -\Phi''(Y^i_s)\)\sum\limits_{j=1}^N|Z^{i,j}_s|^2ds. \nn
		\end{aligned}
	\end{equation*}
	Taking the conditional expectation $\mathbb{E}_t[\cd]$ on both sides, we have from \autoref{ass 3.2}
	\begin{align*}
		& \Phi(Y^i_t)+\frac{1}{2}\mathbb{E}_t\bigg[\int_{t}^{T}\sum\limits_{j=1}^N|Z^{i,j}_s|^2ds \bigg]   \\
		&\les  \Phi(\eta^i)+\mathbb{E}_t\bigg[\int_{t}^{T}|\Phi'(Y^i_s)|
		\Big(\theta_s+K\big[|Y^i_s|+ \cW_2(\nu_s^N, \delta_{\{0\}})\big]\Big)ds\bigg]\\
		&  \les \Phi(K_1)+|\Phi'(C)|\mathbb{E}_t\bigg[\int_{t}^{T}
		\Big(\theta_s+K |Y^i_s|+ K\Big\{\frac{1}{N}\sum\limits_{i=1}^N|Y^i_s|^2\Big\}^\frac{1}{2} \Big)ds\bigg]\\
		& \les  \Phi(K_1)+|\Phi'(C)| \Big(\sqrt{K_3 T}+2KCT\Big).
	\end{align*}
	Hence,
	\begin{equation*}
		\|Z^{i,j}\|^2_{ \cZ^2_\dbF(0,T)}
		=\|Z^{i,j}\cdot W^j\|^2_{BMO(\mathbb{P})}\leq2\Phi(K_1)+2|\Phi'(C)| \big(\sqrt{K_3 T}+2KCT\big),
	\end{equation*}
	which implies that $Z^{i,j}\cdot W^j$ is a BMO martingale and thus (\ref{5.111}) holds.
\end{proof}

The particle systems \rf{8.2}-\rf{8.2.2} have the following convergence.

\begin{theorem}\label{th 8.1}\it 
	Under  \autoref{ass 3.3-0}, for any $p\geq2$, there exist two constants $q_0, q'_0>1$ and  a positive constant
	$C$, depending only on $(K,K_1,K_3,T,\g,\phi(\cd),p,q_0,q'_0)$, such that, for $i=1,\cds,N,$
	\begin{align*}
		\mathrm{(i)}\quad  &\mathbb{E}\Big[\frac{1}{N}\sum\limits_{i=1}^N\Big\{\sup\limits_{t\in[0,T]}|\Delta Y_t^i|^p
		+\(\int_0^T \sum\limits_{j=1}^N|\Delta Z^{i,j}_t|^2dt\)^\frac{p}{2}\Big\}\Big] \nn\\
		&\leq C\mathbb{E}\Big[\int_0^T \mathcal{W}^{pq_0q'_0}_2(\nu_t^N,\bar{\nu}_t)dt+ \int_0^T \mathcal{W}^{pq_0q'_0}_2(\mu_t^N,\bar{\mu}_t)dt\Big]^\frac{1}{q_0q'_0},\nn\\
		\mathrm{(ii)}\quad &
		\mathbb{E}\Big\{\sup\limits_{t\in[0,T]}|\Delta Y_t^i|^p+\(\int_0^T \sum\limits_{j=1}^N
		|\Delta Z^{i,j}_t|^2dt\)^\frac{p}{2}\Big\}\\
		&\leq C\mathbb{E}\Big[\int_0^T \mathcal{W}^{pq_0q'_0}_2(\nu_t^N,\bar{\nu}_t)dt+ \int_0^T \mathcal{W}^{pq_0q'_0}_2(\mu_t^N,\bar{\mu}_t)dt\Big]^\frac{1}{q_0q'_0}.\nn
		%.
	\end{align*}
\end{theorem}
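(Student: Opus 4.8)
The plan is to observe that, for each fixed $i$, the pair $(\Delta Y^i,(\Delta Z^{i,j})_{1\leq j\leq N})$ solves a BSDE with \emph{zero} terminal value whose driver is the difference of generators, and then to linearise the quadratic $z$-term by a Girsanov change of measure so that a classical $L^p$ a priori estimate becomes available. First I would subtract \rf{8.2.2} from \rf{8.2} to get
$$\Delta Y^i_t=\int_t^T\big[g(s,Y^i_s,Z^{i,i}_s,\nu^N_s,\mu^N_s)-g(s,\bar Y^i_s,\bar Z^i_s,\bar\nu_s,\bar\mu_s)\big]\,ds-\int_t^T\sum_{j=1}^N\Delta Z^{i,j}_s\,dW^j_s.$$
Using condition (ii) of \autoref{ass 3.3-0} together with the $N$-independent bound $C$ of \autoref{22.8.29.1}, the driver difference is controlled by $K|\Delta Y^i_s|+K\,r^N_s+\Lambda^i_s\,\Delta Z^{i,i}_s$, where $r^N_s:=\cW_2(\nu^N_s,\bar\nu_s)+\cW_2(\mu^N_s,\bar\mu_s)$ and $|\Lambda^i_s|\leq\phi(C)(1+|Z^{i,i}_s|+|\bar Z^i_s|)$.

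Next I would carry out the Girsanov transform. Since $Z^{i,i}\cd W^i$ and $\bar Z^i\cd W^i$ are $BMO(\dbP)$-martingales whose norms are bounded uniformly in $N$ (by \autoref{22.8.29.1} for $Z^{i,i}$ and by \autoref{th 3.5} for the mean-field solution $\bar Z^i$), the martingale $\Lambda^i\cd W^i$ lies in $BMO(\dbP)$ with an $N$-free norm. Setting $\widetilde{W}^i_s:=W^i_s-\int_0^s\Lambda^i_r\,dr$ and $d\dbQ^i:=\sE(\Lambda^i\cd W^i)_0^T\,d\dbP$, under $\dbQ^i$ the processes $\widetilde{W}^i$ and $\{W^j:j\neq i\}$ are independent Brownian motions, and the equation for $\Delta Y^i$ becomes Lipschitz:
$$\Delta Y^i_t=\int_t^T G^i_s\,ds-\int_t^T\Delta Z^{i,i}_s\,d\widetilde{W}^i_s-\int_t^T\sum_{j\neq i}\Delta Z^{i,j}_s\,dW^j_s,\qquad |G^i_s|\leq K|\Delta Y^i_s|+K\,r^N_s.$$
A standard $L^{pq_0}(\dbQ^i)$ estimate (Burkholder--Davis--Gundy followed by Gronwall's inequality), using that the terminal value vanishes, then yields
$$\dbE^{\dbQ^i}\Big[\sup_{t\in[0,T]}|\Delta Y^i_t|^{pq_0}+\Big(\int_0^T\sum_{j=1}^N|\Delta Z^{i,j}_s|^2\,ds\Big)^{pq_0/2}\Big]\leq C\,\dbE^{\dbQ^i}\Big[\Big(\int_0^T r^N_s\,ds\Big)^{pq_0}\Big].$$

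The decisive step is to pass from $\dbQ^i$ back to $\dbP$ with all constants independent of $N$, which I would accomplish by two Hölder steps, each compensated by the reverse Hölder inequality. Denoting by $X$ the bracket on the left, one Hölder step plus \autoref{22.9.1.1} applied to the inverse density $\sE(-\Lambda^i\cd\widetilde{W}^i)$ --- a stochastic exponential of a $BMO(\dbQ^i)$-martingale by \autoref{2.1.11} --- gives $\dbE^\dbP[X]\leq C(\dbE^{\dbQ^i}[X^{q_0}])^{1/q_0}$; a second Hölder step with exponent $q_0'$, combined with \autoref{22.9.1.1} for $\sE(\Lambda^i\cd W^i)$ and Jensen's inequality, turns the right-hand side above into $\big(\dbE^\dbP\int_0^T(r^N_s)^{pq_0q_0'}\,ds\big)^{1/q_0'}$. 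Chaining the two and using $(r^N_s)^{pq_0q_0'}\leq C\,[\cW_2^{pq_0q_0'}(\nu^N_s,\bar\nu_s)+\cW_2^{pq_0q_0'}(\mu^N_s,\bar\mu_s)]$ produces exactly estimate (ii); here $q_0,q_0'>1$ are chosen large enough (equivalently, their conjugate exponents close to $1$) that the uniform $BMO$ norm stays below the thresholds $\Phi(\cd)$ in \autoref{22.9.1.1}. Estimate (i) then follows immediately by averaging (ii) over $i$, since every summand obeys the same bound. I expect the main obstacle to be precisely this measure-change bookkeeping: the quadratic growth in $z$ forces a \emph{per-particle} Girsanov transform, so one must verify that the $BMO$ norm of $\Lambda^i\cd W^i$, and hence the admissible exponents $q_0,q_0'$ and every constant, are uniform in $N$ --- which is exactly what the $N$-free bounds of \autoref{22.8.29.1} (and \autoref{th 3.5}) supply.
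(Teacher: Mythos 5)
Your proposal is correct and follows essentially the same route as the paper's proof: the same splitting of the driver into a Lipschitz part in $(\Delta Y^i,\, r^N)$ plus a linearized $z$-increment $\Lambda^i_s\Delta Z^{i,i}_s$, the same per-particle Girsanov transform $d\dbQ^i=\sE(\Lambda^i\cd W^i)_0^T\,d\dbP$ with $N$-uniform BMO control of $\Lambda^i\cd W^i$ coming from \autoref{22.8.29.1} and \autoref{th 3.5}, the standard $L^q$ estimate for the resulting Lipschitz BSDE under $\dbQ^i$ (the paper cites Briand et al.\ for exactly this step), and the same two H\"older/reverse-H\"older passages (via \autoref{22.9.1.1} and \autoref{2.1.11}) to return to $\dbP$ with constants independent of $N$. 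The only cosmetic difference is the order of operations: you establish estimate (ii) directly for each fixed $i$ and then average to obtain (i), whereas the paper first sums over $i$ (using concavity of $x\mapsto x^{1/q_0'}$) to prove (i) and then invokes exchangeability of the particles to deduce (ii).
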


\begin{proof}
	From BSDE \rf{8.2} and the notation \rf{22.8.13.1}, we see that the pair $(\D Y^i,\D Z^{i,j})$ satisfies the following equation
	\begin{equation*}\label{22.8.29.2}
		\left\{\ba{ll}
		-d\Delta Y_t^i=\big[g(t,Y_t^i,Z_t^{i,i},\nu_t^N,\mu_t^N)
		-g(t,\bar{Y}_t^i,\bar{Z}_t^{i},\bar{\nu}_t, \bar{\mu}_t)\big]dt
		-\sum\limits_{j=1}^N\Delta Z_t^{i,j}dW^j_t,\quad t\in[0,T];\\
		\q \Delta Y^i_T=0,
		\ea\right.
	\end{equation*}
	where
	$$
	\begin{aligned}
		&g(t,Y_t^i,Z_t^{i,i},\nu_t^N,\mu_t^N)-g(t,\bar{Y}_t^i,\bar{Z}_t^{i},\bar{\nu}_t, \bar{\mu}_t)=J_{1,t}+J_{2,t},
	\end{aligned}
	$$
	with
	$$J_{1,t}\deq g(t,Y_t^i,Z_t^{i,i},\nu_t^N,\mu_t^N)-g(t,\bar{Y}_t^i,Z_t^{i,i},\bar{\nu}_t, \bar{\mu}_t),$$
	and
	$$J_{2,t}\deq g(t,\bar{Y}_t^i,Z_t^{i,i},\bar{\nu}_t, \bar{\mu}_t)-g(t,\bar{Y}_t^i,\bar{Z}_t^{i,i},\bar{\nu}_t, \bar{\mu}_t).$$
	On the term $J_{2,t}$, by  \autoref{ass 3.3-0}, there is a process $\Gamma^i$ such that
	$$J_{2,t}=\Gamma^i_t \Delta Z_t^{i,i},$$
	where
	\begin{equation*}\label{22.9.9.1}
		|\Gamma^i_t|\leq \phi(|\bar{Y}^i_t|\vee
		||\bar{Y}^i_t||_{L^2(\Omega)})(1+|Z^{i,i}_t|+|\bar{Z}^{i,i}_t|).
	\end{equation*}
	By  \rf{4.7.0} and \rf{5.111}, for each $i$,  we have $||\G^i||_{\cZ^2_\dbF(0,T)}\les C$ for a constant $C$ depending only on  $(K,K_1,K_3,T)$.
	Now, thanks to Girsanov theorem, we have
	\begin{equation*}
		-d\Delta Y_t^i=J_{1,t}dt-\sum\limits_{j=1}^N\Delta Z_t^{i,j}d\widetilde{W}^{j,i}_t,\quad t\in[0,T].
	\end{equation*}
	Here $\wt W^{j,i}$ depends on $i$, which means one only changes the $i$-th component of $(W^1,\cd\cd\cd,W^N)$,
	i.e., 
	$$\widetilde{W}^{j,i}_t=
	\left\{\ba{ll}
	\ds W^j_t-\int_0^t\Gamma^i_rdr,\q\ j=i;\\
	\ns\ds W^j_t,\qq\qq\q\ \ \  j\neq i.
	\ea\right.$$
	Then $(\wt W^{j,i})_{1\leq j\leq N}$ is an N-dimensional $\mathbb{Q}^i$-Brownian motion with
	$d\mathbb{Q}^i=\sE^i(\Gamma\cdot W^j)_0^Td\mathbb{P}$, where $\G=(0,\cd\cd\cd,0,\G^i,0,\cd\cd\cd,0).$
	From Briand et al. \cite{Briand-Delyon-Hu-Pardoux-Stoica-03}, it follows for fixed $i$,
	\begin{equation}\label{22.9.9.11111}
		\begin{aligned}
			&\dbE^{\dbQ^i}\[\sup_{t\in[0,T]}|\Delta Y_t^i|^p+\(\int_0^T\sum_{j=1}^N|\Delta Z_t^{i,j}|^2dt \)^{\frac{p}{2}}\]\\
			&\leq C\mathbb{E}^{\dbQ^i}\Big[\int_0^T \mathcal{W}^p_2(\nu_t^N,\bar{\nu}_t)dt+ \int_0^T \mathcal{W}^p_2(\mu_t^N,\bar{\mu}_t)dt\Big].
		\end{aligned}
	\end{equation}
	Notice that $\|\G\cd W^j\|_{BMO}$ and $\|\G\cd W^j\|_{{BMO}_q}, q>2$ are equivalent,  from \autoref{22.9.1.1} there exists a constant $p_0>1$
	such that $$\mathbb{E}\Big[(\sE^i(\Gamma\cdot W^j)_0^T)^{p_0}\]\leq C_{p_0}.$$
	For this $p_0$, by summing over $i$ on both sides of (\ref{22.9.9.11111}) and then applying H\"{o}lder inequality, we have for $p\geq2$,
	\begin{equation*}
		\begin{aligned}
			&\frac{1}{N}\sum_{i=1}^N\bigg\{\dbE^{\dbQ^i}\[\sup_{t\in[0,T]}|\Delta Y_t^i|^p+\(\int_0^T\sum_{j=1}^N|\Delta Z_t^{i,j}|^2dt \)^{\frac{p}{2}}\]\bigg\}\\
			&\leq \frac{C}{N}\sum_{i=1}^N\mathbb{E}^{\dbQ^i}\Big[\int_0^T \mathcal{W}^p_2(\nu_t^N,\bar{\nu}_t)dt+ \int_0^T \mathcal{W}^p_2(\mu_t^N,\bar{\mu}_t)dt\Big]\\
			&\leq \frac{C}{N}\sum_{i=1}^N\mathbb{E}\Big[\sE^i(\Gamma\cdot W^j)_0^T\cd\(\int_0^T \mathcal{W}^p_2(\nu_t^N,\bar{\nu}_t)dt+ \int_0^T \mathcal{W}^p_2(\mu_t^N,\bar{\mu}_t)dt\)\Big]\\
			&\leq \frac{C}{N}\sum_{i=1}^N
			\mathbb{E}\Big[(\sE^i(\Gamma\cdot W^j)_0^T)^{p_0}\]^\frac{1}{p_0}
			\mathbb{E}\Big[\(\int_0^T \mathcal{W}^p_2(\nu_t^N,\bar{\nu}_t)dt+ \int_0^T \mathcal{W}^p_2(\mu_t^N,\bar{\mu}_t)dt\)^{q_0}\Big]^\frac{1}{q_0}\\
			&\leq \frac{C}{N}\sum_{i=1}^N\mathbb{E}\Big[\(\int_0^T \mathcal{W}^p_2(\nu_t^N,\bar{\nu}_t)dt+ \int_0^T \mathcal{W}^p_2(\mu_t^N,\bar{\mu}_t)dt\)^{q_0}\Big]^\frac{1}{q_0}\\
			&\leq C\mathbb{E}\Big[\int_0^T \mathcal{W}^{pq_0}_2(\nu_t^N,\bar{\nu}_t)dt+ \int_0^T \mathcal{W}^{pq_0}_2(\mu_t^N,\bar{\mu}_t)dt\Big]^\frac{1}{q_0},
		\end{aligned}
	\end{equation*}
	where $q_0=\frac{p_0}{p_0-1}>1$.

	On the other hand,  it is easy to see that $d\mathbb{P}=\sE^i(-\Gamma\cdot \widetilde{W}^{j,i})_0^T d\mathbb{Q}^i.$
	Similar to the above argument, for fixed $i$,
	thanks to \autoref{22.9.1.1} there exists a constant $p'_0>1$ such that $$\dbE^{\dbQ^i}\[\(\sE^i(-\Gamma\cdot \widetilde{W}^{j,i})_0^T\)^{p'_0}\]\leq C_{p'_0}.$$
	For the above $p'_0$, Cauchy-Schwarz inequality and H\"{o}lder inequality allow us to show that for any $p\geq2$,
	\begin{equation*}
		\begin{aligned}
			&\frac{1}{N}\sum_{i=1}^N\bigg\{\dbE\[\sup_{t\in[0,T]}|\Delta Y_t^i|^p+\(\int_0^T\sum_{j=1}^N|\Delta Z_t^{i,j}|^2dt \)^{\frac{p}{2}}\]\bigg\}\\
			&\leq\frac{1}{N}\sum_{i=1}^N\bigg\{\dbE^{\dbQ^i}\[\sE^i(-\Gamma\cdot \widetilde{W}^{j,i})_0^T\cd
			\(\sup_{t\in[0,T]}|\Delta Y_t^i|^p+\(\int_0^T\sum_{j=1}^N|\Delta Z_t^{i,j}|^2dt \)^{\frac{p}{2}}\)\]\bigg\}\\
			&\leq\frac{C}{N}\sum_{i=1}^N
			\bigg\{\dbE^{\dbQ^i}\[\(\sE^i(-\Gamma\cdot \widetilde{W}^{j,i})_0^T\)^{p'_0}\]\bigg\}^{\frac{1}{p'_0}}\\
			&\q \cdot   \bigg\{\dbE^{\dbQ^i}\[
			\(\sup_{t\in[0,T]}|\Delta Y_t^i|^{pq'_0}+\(\int_0^T\sum_{j=1}^N|\Delta Z_t^{i,j}|^2dt \)^{\frac{pq'_0}{2}}\)\]\bigg\}^{\frac{1}{q'_0}}\\
			&\leq\frac{C}{N}\sum_{i=1}^N\bigg\{\dbE^{\dbQ^i}\[
			\(\sup_{t\in[0,T]}|\Delta Y_t^i|^{pq'_0}+\(\int_0^T\sum_{j=1}^N|\Delta Z_t^{i,j}|^2dt \)^{\frac{pq'_0}{2}}\)\]\bigg\}^{\frac{1}{q'_0}}\\
			&\leq C\bigg\{\frac{1}{N}\sum_{i=1}^N\dbE^{\dbQ^i}\[
			\sup_{t\in[0,T]}|\Delta Y_t^i|^{pq'_0}+\(\int_0^T\sum_{j=1}^N|\Delta Z_t^{i,j}|^2dt \)^{\frac{pq'_0}{2}}\]\bigg\}^{\frac{1}{q'_0}},
		\end{aligned}
	\end{equation*}
	where $q'_0=\frac{p'_0}{p'_0-1}>1$.

	Finally, combining the above equalities we arrive at
	\begin{equation*}
		\begin{aligned}
			&\frac{1}{N}\sum_{i=1}^N\bigg\{\dbE\[\sup_{t\in[0,T]}|\Delta Y_t^i|^p+\(\int_0^T\sum_{j=1}^N|\Delta Z_t^{i,j}|^2dt \)^{\frac{p}{2}}\]\bigg\}\\
			&\leq C\mathbb{E}\Big[\int_0^T \mathcal{W}^{pq_0q'_0}_2(\nu_t^N,\bar{\nu}_t)dt+ \int_0^T \mathcal{W}^{pq_0q'_0}_2(\mu_t^N,\bar{\mu}_t)dt\Big]^\frac{1}{q_0q'_0}.
		\end{aligned}
	\end{equation*}
	Thanks to the exchangeability of the $(Y_t,Z_t)$, we have the item (ii).
\end{proof}

\begin{remark}\label{re 8.1}\rm
	In BSDE \rf{MFBSDE},
	if the generator $g$ does not depend on the law of $Z$, then the inequalities appeared in \autoref{th 8.1} become that, for any $p\geq2$ and for 
	$i=1,\cds,N,$
	\begin{equation*}
		\begin{aligned}
			\mathrm{(i)}\quad &\mathbb{E}\Big[\frac{1}{N}\sum\limits_{i=1}^N\Big\{\sup\limits_{t\in[0,T]}|\Delta Y_t^i|^p
			+\(\int_0^T \sum\limits_{j=1}^N|\Delta Z^{i,j}_t|^2dt\)^\frac{p}{2}\Big\}\Big]\\
			&\leq C
			\mathbb{E}\Big[\int_0^T \mathcal{W}^{pq_0q'_0}_2(\nu_t^N,\bar{\nu}_t)dt\Big]^\frac{1}{q_0q'_0},\\
			\mathrm{(ii)}\quad &
			\mathbb{E}\Big[\sup\limits_{t\in[0,T]}|\Delta Y_t^i|^p+\(\int_0^T \sum\limits_{j=1}^N
			|\Delta Z^{i,j}_t|^2dt\)^\frac{p}{2}\Big]
			\leq C
			\mathbb{E}\Big[\int_0^T \mathcal{W}^{pq_0q'_0}_2(\nu_t^N,\bar{\nu}_t)dt\Big]^\frac{1}{q_0q'_0}.
		\end{aligned}
	\end{equation*}
\end{remark}

\begin{lemma}\label{le 8.1} \it 
	Let  \autoref{ass 3.3-0}   hold true and assume that $g$ does not depend on the law of $Z$. Then, for $p>2$, there
	exist two constants $q_1, q'_1>1$ such that
	$$
	\sup_{t\in[0,T]}\dbE\[ \cW_2^p(\nu_t^N,\bar{\nu}_t)\]\leq CN^{-\frac{1}{2q_1q'_1}}.
	$$
\end{lemma}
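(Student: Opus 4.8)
The plan is to interpose, between the particle empirical measure $\nu_t^N=\frac1N\sum_{i=1}^N\delta_{Y_t^i}$ and the limiting law $\bar\nu_t=\dbP_{\bar Y_t^i}$, the empirical measure $\bar\nu_t^N\deq\frac1N\sum_{i=1}^N\delta_{\bar Y_t^i}$ built from the i.i.d. copies $\bar Y^i$ solving the decoupled equation \rf{8.2.2}. By the triangle inequality for $\cW_2$ together with $(a+b)^p\les 2^{p-1}(a^p+b^p)$,
$$\dbE\big[\cW_2^p(\nu_t^N,\bar\nu_t)\big]\les 2^{p-1}\dbE\big[\cW_2^p(\nu_t^N,\bar\nu_t^N)\big]+2^{p-1}\dbE\big[\cW_2^p(\bar\nu_t^N,\bar\nu_t)\big].$$
The second (statistical) term is a pure i.i.d. fluctuation, and the first (coupling) term is controlled by the synchronous coupling $\cW_2^2(\nu_t^N,\bar\nu_t^N)\les\frac1N\sum_{j=1}^N|\Delta Y_t^j|^2$, whence, by Jensen and the uniform bounds $\|Y^i\|_{S_\dbF^\infty(0,T)},\|\bar Y^i\|_{S_\dbF^\infty(0,T)}\les C$ from \autoref{22.8.29.1} and \autoref{th 3.5}, one has $\dbE[\cW_2^p(\nu_t^N,\bar\nu_t^N)]\les C\,\frac1N\sum_{j=1}^N\dbE[|\Delta Y_t^j|^p]$. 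Thus the whole statement reduces to two estimates: the statistical rate of $\bar\nu^N_t\to\bar\nu_t$, and a bound on $A_N\deq\frac1N\sum_{i=1}^N\dbE[\sup_{t}|\Delta Y_t^i|^p]$.

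For the statistical term I would use that the $\bar Y_t^i$ are i.i.d., real-valued (dimension one) and uniformly bounded, so all their moments are finite uniformly in $t$; the quantitative estimate of Fournier and Guillin for the empirical approximation of a measure on $\dbR$ then gives $\sup_t\dbE[\cW_2^{m}(\bar\nu_t^N,\bar\nu_t)]\les CN^{-1/2}$ for every fixed power $m\ges p$ (one sits in the regime $m>d/2$, and higher powers cost nothing by boundedness). For $A_N$ I would return to the BSDE satisfied by $\Delta Y^i$, exactly as in the proof of \autoref{th 8.1}: split its driver into the part $\Gamma_s^i\Delta Z_s^{i,i}$ coming from the quadratic growth in $Z$ and the part $J_{1,s}=g(s,Y_s^i,Z_s^{i,i},\nu_s^N)-g(s,\bar Y_s^i,Z_s^{i,i},\bar\nu_s)$, remove the first by the Girsanov transform $d\dbQ^i=\sE(\Gamma^i\cd W^i)_0^T d\dbP$ that shifts only the $i$-th Brownian motion, and bound $|J_{1,s}|\les K|\Delta Y_s^i|+K\big(\frac1N\sum_{j}|\Delta Y_s^j|^2\big)^{1/2}+K\,\cW_2(\bar\nu_s^N,\bar\nu_s)$. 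An $L^p$ estimate under $\dbQ^i$, the passage between $\dbQ^i$ and $\dbP$ through H\"older's inequality and the reverse H\"older inequality of \autoref{22.9.1.1} for the moments of $\sE(\Gamma^i\cd W^i)$ — which is where the exponents $q_1,q_1'>1$ are produced — and a Gronwall argument in $t$ absorbing the term $K|\Delta Y^i|$ and the self-coupling should leave only the fluctuation as genuine source, yielding $A_N\les C\,\big(\sup_s\dbE[\cW_2^{pq_1q_1'}(\bar\nu_s^N,\bar\nu_s)]\big)^{1/(q_1q_1')}\les CN^{-1/(2q_1q_1')}$. Combining the two estimates with the decomposition above then gives $\sup_t\dbE[\cW_2^p(\nu_t^N,\bar\nu_t)]\les CN^{-1/(2q_1q_1')}$, the statistical term $N^{-1/2}$ being the faster one.

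The step I expect to be the main obstacle is the closing of the Gronwall inequality for $A_N$ in the presence of the $N$ distinct measures $\dbQ^i$. One cannot simply invoke the final estimate of \autoref{th 8.1}: its right-hand side still carries $\cW_2(\nu^N,\bar\nu)$, and substituting the decomposition only produces the scale-invariant relation $A_N\les C A_N^{1/(q_1q_1')}+CN^{-1/(2q_1q_1')}$, which is a consistency statement and by itself yields no rate. To obtain a rate one must keep the time-integrated (differential) form and treat the coupling term $\frac1N\sum_j|\Delta Y_s^j|^2$ \emph{linearly}, so that Gronwall absorbs it; the difficulty is that after summing $\frac1N\sum_i$ this term generates cross expectations $\dbE^{\dbQ^i}[|\Delta Y_s^j|^p]$ with $i\neq j$, i.e. the $j$-th particle's error weighted by the $i$-th particle's density. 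These must be controlled uniformly — using that $\sE(\Gamma^i\cd W^i)$ has $\dbP$-expectation one and is, up to the $O(1/N)$ influence of a single particle, nearly independent of $\Delta Y^j$ for $j\neq i$ — so that the change of measure costs only the fixed factor $q_1q_1'$ rather than a concavity-breaking power of $A_N$. Carrying the coupling, rather than the fluctuation, through the family $\{\dbQ^i\}$ is exactly what forces the weaker exponent $\tfrac{1}{2q_1q_1'}$ and constitutes the technical heart of the argument.
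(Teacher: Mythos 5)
Your toolkit is the same as the paper's --- the i.i.d. solutions $\widetilde Y^i$ of \rf{8.1.000}, the synchronous-coupling bound $\cW_2^2(\nu^N_t,\widetilde{\nu}^N_t)\les\frac1N\sum_j|\Delta Y^j_t|^2$, the Girsanov transforms $\widehat{\dbQ}^i$ killing the quadratic part of the driver, the $L^p$ estimates of Briand et al., the reverse H\"older inequality of \autoref{22.9.1.1} producing $q_1,q_1'$, and Fournier--Guillin --- but your proof does not close at exactly the step you single out, and the patch you propose for it would fail. Your reduction needs the rate $A_N\les CN^{-1/(2q_1q_1')}$ for the particle error $A_N=\frac1N\sum_i\dbE\big[\sup_t|\Delta Y^i_t|^p\big]$ under $\dbP$. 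As you yourself observe, performing the change of measure $\dbP\leftrightarrow\dbQ^i$ by H\"older/reverse H\"older \emph{before} the Gronwall step places the concave power $1/(q_1q_1')$ on the unknown, and the resulting relation $A_N\les CA_N^{1/(q_1q_1')}+CN^{-1/(2q_1q_1')}$ carries no rate. The repair you suggest --- that $\sE(\Gamma^i\cd W^i)_0^T$ is ``nearly independent'' of $\Delta Y^j$, $j\neq i$, up to an $O(1/N)$ error --- is not available: $\Delta Y^j$ depends on all $N$ Brownian motions, both through $\nu^N$ in its driver and through the components $Z^{j,i}$, and quantifying that dependence as $O(1/N)$ is itself a propagation-of-chaos estimate of the very type being proved, so the argument is circular. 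Note also that your plan inverts the paper's logical order: the $L^p$ rate for the particle error is the content of \autoref{th 8.2-1}, which in the paper is a \emph{consequence} of \autoref{le 8.1} combined with \autoref{re 8.1}, not an input to it.

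What the paper does instead --- and this is the idea missing from your proposal --- is to run Gronwall \emph{before} any change of measure, on the tilted-averaged quantity $h_m(t)\deq\frac1N\sum_i\dbE^{\widehat{\dbQ}^i}\big[\cW_2^m(\nu^N_t,\bar{\nu}_t)\big]$. Inside this family of measures the $L^p$ estimate and the coupling bound give a \emph{linear} integral inequality for $h_m$, whence $h_m(t)\les C\,\frac1N\sum_i\dbE^{\widehat{\dbQ}^i}\big[\cW_2^m(\widetilde{\nu}^N_t,\bar{\nu}_t)\big]$, which is \rf{8.1.006}. Only afterwards are the two measure changes performed: once on the left-hand side, $\dbE\big[\cW_2^p(\nu^N_t,\bar{\nu}_t)\big]\les C\{h_{pq_1'}(t)\}^{1/q_1'}$ (producing $q_1'$, as in \rf{8.1.008}), and once on the statistical term, $\frac1N\sum_i\dbE^{\widehat{\dbQ}^i}\big[\cW_2^{pq_1'}(\widetilde{\nu}^N_t,\bar{\nu}_t)\big]\les C\{\dbE[\cW_2^{pq_1q_1'}(\widetilde{\nu}^N_t,\bar{\nu}_t)]\}^{1/q_1}$ (producing $q_1$, as in \rf{8.1.007}). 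In this ordering the concave exponents land only on the i.i.d. fluctuation --- which Fournier--Guillin controls at \emph{any} fixed power, after the comparison $\cW_2^{pq_1q_1'}\les\cW_{pq_1q_1'}^{pq_1q_1'}$ of \rf{8.1.0091} --- and never on the unknown error, so no self-referential inequality arises. If you reorganize your argument in this order (Gronwall first, under the $\widehat{\dbQ}^i$; H\"older and reverse H\"older last), your estimates assemble into the paper's proof.
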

\begin{proof}
	Let $(\bar{Y}^i,\bar{Z}^i)$ be the solution of (\ref{8.2.2}), and let $(\widetilde{Y}^i,\widetilde{Z}^i), i=1,2,\cd\cd\cd, N$ be i.i.d. copies of  $(\bar{Y}^i,\bar{Z}^i)$
	such that
	\begin{equation}\label{8.1.000}
		\widetilde{Y}^i_t=\eta^i+\int_t^Tg(s,\widetilde{Y}^i_s,\widetilde{Z}^{i}_s, \bar{\nu}_s)ds-\int_t^T\widetilde{Z}^{i}_sdW^i_s,\q t\in[0,T].
	\end{equation}
	%
	%
	%%
	%             The proof is split into three steps.
	%
	%
	%
	%             \textbf{ Step 1.} We prove for any $t\in [0,T]$, \begin{equation}\label{8.1.0000}\begin{aligned}
			%             \frac{1}{N}\sum_{i=1}^N\dbE^{\widehat{\dbQ}^i}\[\cW_2^p(\nu_t^N, \bar{\nu}_t)\]\leq C \frac{1}{N}\sum_{i=1}^N\dbE^{\widehat{\dbQ}^i}\[\cW_2^p(\widetilde{\nu}_t^N, \bar{\nu}_t)\].
			%             \end{aligned}\end{equation}
	%             where $\widetilde{\nu}_t^N=\frac{1}{N}\sum_{i=1}^N\delta_{\widetilde{Y}^i_s}.$
	%
	%
	Then
	\begin{equation}\label{8.1.001}\begin{aligned}
			\widetilde{Y}^i_t-Y^i_t&=\int_t^Tg(s,\widetilde{Y}^i_s,\widetilde{Z}^{i}_s,\bar{\nu}_s)-g(s,Y^i_s,\widetilde{Z}^{i}_s,\bar{\nu}_s)
			+g(s,Y^i_s,\widetilde{Z}^{i}_s, \bar{\nu}_s)-g(s,Y^i_s,Z^{i,i}_s, \bar{\nu}_s)\\
			&\q+g(s,Y^i_s,Z^{i,i}_s,\bar{\nu}_s)-g(s,Y^i_s,Z^{i,i}_s, \nu^N_s)ds
			-\int_t^T\sum_{j=1}^N\(\d_{ij}\widetilde{Z}^{i}_s-Z^{i,j}_s \)dW^j_s,
	\end{aligned}\end{equation}
	where $\d_{ij}=1,$ if $i=j$; or else, it equals to $0$.
	Since
	$$\left\{ \begin{aligned}
		&g(s,Y^i_s,\widetilde{Z}^{i}_s, \bar{\nu}_s)-g(s,Y^i_s,Z^{i,i}_s, \bar{\nu}_s)=\Theta^i_s(\widetilde{Z}^{i}_s-Z^{i,i}_s);\\
		&|\Theta^i_s|\leq\phi(|Y_s^i|\vee\|Y_s^i\|_{L^2(\O)})(1+|\widetilde{Z}^{i}_s|+|Z^{i,i}_s|),
	\end{aligned}\right.$$
	so for  given $\bar{\nu}_s$,  the equation (\ref{8.1.000}) is a standard BSDE with quadratic growth. Thanks to Hu and Tang \cite[Theorem 2.3]{Hu-Tang-16}, $|\widetilde{Z}^{i}|$ belongs to $\cZ^2_{\dbF}[0,T]$. Hence, according to \autoref{22.8.29.1}, one can know that
	$\Theta^i$ also belongs to $\cZ^2_{\dbF}[0,T]$. Define $\widehat{W}^{j,i}(s)=W^j(s)-\int_0^s\Theta^i_rdr$. Then
	$(\widehat{W}^{j,i})_{1\leq j\leq N}$ is an $N$-dimensional Brownian motion under the probability $\widehat{\dbQ}^i$, which is defined by $d\widehat{\dbQ}^i=\sE^i(\Theta\cdot W^j)_0^T d\mathbb{P}$ with $\Th=(0,\cd\cd\cd,0,\Th^i,0,\cd\cd\cd,0).$
	Consequently, (\ref{8.1.001}) can be written as
	\begin{equation*}\label{8.1.002}\begin{aligned}
			\widetilde{Y}^i_t-Y^i_t
			&=\int_t^Tg(s,\widetilde{Y}^i_s,\widetilde{Z}^{i}_s,\bar{\nu}_s)-g(s,Y^i_s,\widetilde{Z}^{i}_s,\bar{\nu}_s)
			+g(s,Y^i_s,Z^{i,i}_s,\bar{\nu}_s)-g(s,Y^i_s,Z^{i,i}_s, \nu^N_s)ds\\
			&\q-\int_t^T\sum_{j=1}^N\(\d_{ij}\widetilde{Z}^{i}_s-Z^{i,j}_s \)d\widehat{W}^{j,i}_s.
	\end{aligned}\end{equation*}
	Thanks to Briand et al. \cite[Proposition 3.2]{Briand-Delyon-Hu-Pardoux-Stoica-03}, we have for fixed $i$ and for $p>2$,
	\begin{equation*}\label{8.1.003}\begin{aligned}
			\dbE^{\widehat{\dbQ}^i}\[\sup_{t\in[0,T]}|\widetilde{Y}^i_t-Y^i_t|^p\]\leq C\dbE^{\widehat{\dbQ}^i}\[\int_0^T\cW_2^p(\nu_s^N,\bar{\nu}_s)ds\].
	\end{aligned}\end{equation*}
	Hence,   Cauchy-Schwarz inequality allows to show that for $t\in[0,T]$,
	\begin{equation*}\label{8.1.004}\begin{aligned}
			&\dbE^{\widehat{\dbQ}^i}\bigg[\cW_2^p( \nu_t^N,\widetilde{\nu}_t^N)\bigg]
			\leq\dbE^{\widehat{\dbQ}^i}\bigg[ \bigg( \frac{1}{N}\sum_{i=1}^N|Y^i_t-\widetilde{Y}^i_t|^2\bigg)^\frac{p}{2}\bigg] \\     &
			\leq\frac{1}{N}\sum_{i=1}^N\dbE^{\widehat{\dbQ}^i}\bigg[ |Y^i_t-\widetilde{Y}^i_t|^p\bigg]
			\leq \frac{C}{N}\sum_{i=1}^N \dbE^{\widehat{\dbQ}^i} \[\int_0^T\cW_2^p(\nu_s^N,\bar{\nu}_s)ds\],
	\end{aligned}\end{equation*} 
	where   $\widetilde{\nu}^N_t$ is defined similarly to ${\nu}^N_t$ as mentioned in  \rf{24.1.24}, but with the substitution of ${Y}^i_t$ by $\widetilde{Y}^i_t$.
	By summing over $i$ on both sides of the past inequality, we have, for $t\in[0,T]$,
	\begin{equation*}\label{8.1.005}\begin{aligned}
			\frac{1}{N}\sum_{i=1}^N\dbE^{\widehat{\dbQ}^i}\bigg[\cW_2^p( \nu_t^N,\widetilde{\nu}_t^N)\bigg]
			\leq C\frac{1}{N}\sum_{i=1}^N\dbE^{\widehat{\dbQ}^i}\[\int_0^T\cW_2^p(\nu_s^N,\bar{\nu}_s)ds\].
	\end{aligned}\end{equation*}
	From this and the triangle inequality, we know, for $t\in[0,T]$,
	\begin{equation*}\label{8.1.005}\begin{aligned}
			&\frac{1}{N}\sum_{i=1}^N\dbE^{\widehat{\dbQ}^i}\bigg[\cW_2^p(\nu_t^N, \bar{\nu}_t)\bigg]
			\leq \frac{1}{N}\sum_{i=1}^N\dbE^{\widehat{\dbQ}^i}\bigg[\(\cW_2(\nu_t^N,\widetilde{\nu}_t^N)+\cW_2(\widetilde{\nu}_t^N, \bar{\nu}_t)\)^p\bigg]\\
			&\leq2^p\frac{1}{N}\sum_{i=1}^N\dbE^{\widehat{\dbQ}^i}\bigg[(\cW_2^p(\nu_t^N,\widetilde{\nu}_t^N)+\cW_2^p(\widetilde{\nu}_t^N, \bar{\nu}_t))\bigg]\\
			&\leq C\frac{1}{N}\sum_{i=1}^N\dbE^{\widehat{\dbQ}^i}\[\int_0^T\cW_2^p(\nu_s^N,\bar{\nu}_s)ds\]
			+2^p\frac{1}{N}\sum_{i=1}^N\dbE^{\widehat{\dbQ}^i}\bigg[\cW_2^p(\widetilde{\nu}_t^N, \bar{\nu}_t))\bigg].
	\end{aligned}\end{equation*}
	Using Gronwall's inequality, we deduce, for $t\in[0,T]$,
	\begin{equation}\label{8.1.006}\begin{aligned}
			\frac{1}{N}\sum_{i=1}^N\dbE^{\widehat{\dbQ}^i}\[\cW_2^p(\nu_t^N, \bar{\nu}_t)\]\leq C \frac{1}{N}\sum_{i=1}^N\dbE^{\widehat{\dbQ}^i}\[\cW_2^p(\widetilde{\nu}_t^N, \bar{\nu}_t)\].
	\end{aligned}\end{equation}
	%
	%\textbf{ Step 2.} We prove \begin{equation}\label{8.1.111}\begin{aligned}
			%            \dbE\[\cW_2^p(\nu_t^N, \bar{\nu}_t)\]\leq C\bigg\{\dbE\[\cW_2^{pq_1q'_1}(\widetilde{\nu}_t^N, \bar{\nu}_t)\] \bigg\}^\frac{1}{q_1q'_1}.
			%             \end{aligned}\end{equation}

	On the one hand, similar to \autoref{th 8.1}, thanks to \autoref{22.9.1.1}, there exists a constant $p_1>1$ such that
	$$\max_{1\les i\les N}\mathbb{E}\Big[(\sE^i(\Theta\cdot W^j)_0^T)^{p_1}\]\leq C_{p_1}.$$
	Define $q_1=\frac{p_1}{p_1-1}$. Clearly, $q_1>1.$ From H\"{o}lder inequality,  we have, for $t\in[0,T]$,
	\begin{align}\label{8.1.007}
		&\frac{1}{N}\sum_{i=1}^N\dbE^{\widehat{\dbQ}^i}\[\cW_2^p(\widetilde{\nu}_t^N, \bar{\nu}_t)\]
		\leq\frac{1}{N}\sum_{i=1}^N\dbE\[ \sE^i(\Theta\cdot W^j)_0^T\cd   \cW_2^p(\widetilde{\nu}_t^N, \bar{\nu}_t)\]\nn \\
		&\leq\frac{1}{N}\sum_{i=1}^N\bigg(
		\Big\{\mathbb{E}\Big[(\sE^i(\Theta\cdot W^j)_0^T)^{p_1}\] \Big\}^\frac{1}{p_1}
		\cd \Big\{\mathbb{E}\Big[  \cW_2^{pq_1}(\widetilde{\nu}_t^N, \bar{\nu}_t)     \] \Big\}^\frac{1}{q_1}\bigg)\\
		&\leq C\Big\{\mathbb{E}\Big[  \cW_2^{pq_1}(\widetilde{\nu}_t^N, \bar{\nu}_t)     \] \Big\}^\frac{1}{q_1}. \nn
	\end{align}
	On the other hand, clearly, $d\mathbb{P}=\sE^i((-\Theta)\cdot \widehat{W}^{j,i})_0^T d\widehat{\dbQ}^i$. For fixed $i$, from \autoref{22.9.1.1} there exists a constant $p'_1>1$ such that
	$$\max_{1\les i\les N}\mathbb{E}^{\widehat{\dbQ}^i}
	\Big[(\sE^i((-\Theta)\cdot \widehat{W}^{j,i})_0^T)^{p'_1}\]\leq C_{p'_1}.$$
	Define $q'_1=\frac{p'_1}{p'_1-1}$. Obviously, $q'_1>1.$ Then one has, for $t\in[0,T]$,
	\begin{equation}\label{8.1.008}\begin{aligned}
			&\dbE\[\cW_2^p(\nu_t^N, \bar{\nu}_t)\]=\frac{1}{N}\sum_{i=1}^N\dbE\[\cW_2^p(\nu_t^N, \bar{\nu}_t)\]
			=\frac{1}{N}\sum_{i=1}^N\dbE^{\widehat{\dbQ}^i}\[ \sE^i((-\Theta)\cdot \widehat{W}^{j,i})_0^T\cd\cW_2^p(\nu_t^N, \bar{\nu}_t)\]\\
			&\leq\frac{1}{N}\sum_{i=1}^N\bigg(
			\Big\{\dbE^{\widehat{\dbQ}^i}\Big[(\sE^i((-\Theta)\cdot \widehat{W}^{j,i})_0^T)^{p'_1}\] \Big\}^\frac{1}{p'_1}
			\cd \Big\{\mathbb{E}^{\widehat{\dbQ}^i}\Big[  \cW_2^{pq'_1}(\nu_t^N, \bar{\nu}_t)     \] \Big\}^\frac{1}{q'_1}\bigg)\\
			&\leq C\frac{1}{N}\sum_{i=1}^N\Big\{\mathbb{E}^{\widehat{\dbQ}^i}\Big[  \cW_2^{pq'_1}(\nu_t^N, \bar{\nu}_t)\] \Big\}^\frac{1}{q'_1}.
	\end{aligned}\end{equation}
	According to (\ref{8.1.006})-(\ref{8.1.008}), we have from Cauchy-Schwarz inequality, for $t\in[0,T]$,
	\begin{equation}\label{8.1.009}\begin{aligned}
			&\dbE\[\cW_2^p(\nu_t^N, \bar{\nu}_t)\]
			\leq C\bigg\{\frac{1}{N}\sum_{i=1}^N\mathbb{E}^{\widehat{\dbQ}^i}\Big[  \cW_2^{pq'_1}(\nu_t^N, \bar{\nu}_t)\] \bigg\}^\frac{1}{q'_1}\\
			&\leq C\bigg\{\frac{1}{N}\sum_{i=1}^N\dbE^{\widehat{\dbQ}^i}\[\cW_2^{pq'_1}(\widetilde{\nu}_t^N, \bar{\nu}_t)\] \bigg\}^\frac{1}{q'_1}
			\leq C\bigg\{\dbE\[\cW_2^{pq_1q'_1}(\widetilde{\nu}_t^N, \bar{\nu}_t)\] \bigg\}^\frac{1}{q_1q'_1}.
	\end{aligned}\end{equation}

	Next, we show for $t\in[0,T]$ and $p>2$, $\dbP$-a.s.,
	\begin{equation}\label{8.1.0091}\begin{aligned}
			\cW_2^{pq_1q'_1}(\widetilde{\nu}_t^N, \bar{\nu}_t)\leq\cW_{pq_1q'_1}^{pq_1q'_1}(\widetilde{\nu}_t^N, \bar{\nu}_t).
	\end{aligned}\end{equation}
	In fact, from the definition of $p$-Wasserstein metric, for arbitrary $\e>0$, there exists a $\rho_0\in \cH$, where $\cH$ is
	the set of probability measures $\rho$ on $\dbR^n\ts \dbR^n$ with the first and second marginal laws $\widetilde{\nu}_t^N$ and $\bar{\nu}_t$,
	such that
	$$\begin{aligned}
		\int_{\dbR^{2n}}|x-y|^{pq_1q'_1}\rho_0(dx,dy)\leq\cW_{pq_1q'_1}^{pq_1q'_1}(\widetilde{\nu}_t^N, \bar{\nu}_t)+\e.
	\end{aligned}$$
	From H\"{o}lder inequality, we have for $t\in[0,T]$ and $p>2$, $\dbP$-a.s.,
	\begin{align*}
		&\cW_2^{pq_1q'_1}(\widetilde{\nu}_t^N, \bar{\nu}_t)=\inf_{\rho\in\cH}\bigg\{\int_{\dbR^{2n}}|x-y|^2\rho(dx,dy)\bigg\}^{\frac{pq_1q'_1}{2}}\\
		&\leq\bigg\{\int_{\dbR^{2n}}|x-y|^2\rho_0(dx,dy)\bigg\}^{\frac{pq_1q'_1}{2}}\\
		&\leq\int_{\dbR^{2n}}|x-y|^{pq_1q'_1}\rho_0(dx,dy)\leq\cW_{pq_1q'_1}^{pq_1q'_1}(\widetilde{\nu}_t^N, \bar{\nu}_t)+\e.
	\end{align*}
	Letting $\varepsilon\rightarrow0$ we have $\dbP$-a.s., $\cW_2^{pq_1q'_1}(\widetilde{\nu}_t^N, \bar{\nu}_t)\leq\cW_{pq_1q'_1}^{pq_1q'_1}(\widetilde{\nu}_t^N, \bar{\nu}_t).$

	Combining (\ref{8.1.009}) and (\ref{8.1.0091}) we have from  Fournier and Guillin \cite[Theorem 1]{Fournier-Guillin-15},
	\begin{equation*}\label{8.1.1111111}\begin{aligned}
			&\dbE\[\cW_2^p(\nu_t^N, \bar{\nu}_t)\]\leq C\bigg\{\dbE\[\cW_{pq_1q'_1}^{pq_1q'_1}(\widetilde{\nu}_t^N, \bar{\nu}_t)\] \bigg\}^\frac{1}{q_1q'_1}\leq CN^{-\frac{1}{2q_1q'_1}},\q t\in[0,T].
	\end{aligned}\end{equation*}
	This completes the proof.
\end{proof}

As an immediate consequence of \autoref{le 8.1} and \autoref{re 8.1}, we have the following result concerning the rate of convergence.

\begin{theorem}\label{th 8.2-1} \it 
	Let   \autoref{ass 3.3-0} be in force and assume that the generator $g$ is independent of the law of $Z$.
	Then, for $p\geq2$, there exist four constants $q_0,q'_0,q_1,q'_1>1$ and
	a positive constant  $C$, depending only
	on $(T,K, K_1,K_3,\phi(\cd),\g,p,q_0,q'_0,q_1,q'_1)$,   such that, for $i=1,\cds,N,$
	\begin{equation*}
		\begin{aligned}
			\mathrm{(i)}\quad  &\mathbb{E}\Big[\frac{1}{N}\sum\limits_{i=1}^N\Big\{\sup\limits_{t\in[0,T]}|\Delta Y_t^i|^p
			+\(\int_0^T \sum\limits_{j=1}^N|\Delta Z^{i,j}_t|^2dt\)^\frac{p}{2}\Big\}\Big]
			\leq CN^{-\frac{1}{2q_0q'_0q_1q'_1}},\\
			\mathrm{(ii)}\quad &
			\mathbb{E}\Big[\sup\limits_{t\in[0,T]}|\Delta Y_t^i|^p+\(\int_0^T \sum\limits_{j=1}^N
			|\Delta Z^{i,j}_t|^2dt\)^\frac{p}{2}\Big]
			\leq CN^{-\frac{1}{2q_0q'_0q_1q'_1}}.
		\end{aligned}
	\end{equation*}
\end{theorem}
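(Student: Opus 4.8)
The plan is to combine the two estimates already established, namely the convergence bound of \autoref{re 8.1} and the rate estimate of \autoref{le 8.1}; the statement is essentially an immediate corollary, so the work lies almost entirely in the exponent bookkeeping. The crucial point is that, because $g$ is assumed independent of the law of $Z$, \autoref{re 8.1} removes the $\mu^N_t$-term from the right-hand side of \autoref{th 8.1}, leaving only the $\cW_2$-distance between the empirical measure $\nu^N_t$ and its limit $\bar{\nu}_t$. Thus I would start from
\begin{equation*}
\dbE\Big[\frac{1}{N}\sum_{i=1}^N\Big\{\sup_{t\in[0,T]}|\Delta Y_t^i|^p+\Big(\int_0^T\sum_{j=1}^N|\Delta Z_t^{i,j}|^2\,dt\Big)^{\frac{p}{2}}\Big\}\Big]\les C\,\dbE\Big[\int_0^T\cW_2^{pq_0q_0'}(\nu_t^N,\bar{\nu}_t)\,dt\Big]^{\frac{1}{q_0q_0'}},
\end{equation*}
with $q_0,q_0'>1$ as furnished there.

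Next I would control the remaining expectation by \autoref{le 8.1}. Since $p\ges2$ and $q_0,q_0'>1$, the inflated exponent $P:=pq_0q_0'$ exceeds $2$, so \autoref{le 8.1} is applicable with $p$ replaced by $P$; it produces constants $q_1,q_1'>1$ (depending on $P$, and uniform in $N$ since they arise from the reverse H\"older inequality of \autoref{22.9.1.1} applied to BMO martingales whose norms are bounded independently of $N$ by \autoref{22.8.29.1}) such that $\sup_{t\in[0,T]}\dbE[\cW_2^{P}(\nu_t^N,\bar{\nu}_t)]\les CN^{-\frac{1}{2q_1q_1'}}$. Applying Tonelli's theorem to integrate this bound in $t$ then yields
\begin{equation*}
\dbE\Big[\int_0^T\cW_2^{pq_0q_0'}(\nu_t^N,\bar{\nu}_t)\,dt\Big]=\int_0^T\dbE\big[\cW_2^{pq_0q_0'}(\nu_t^N,\bar{\nu}_t)\big]\,dt\les CT\,N^{-\frac{1}{2q_1q_1'}}.
\end{equation*}
Substituting this into the first display and raising to the power $1/(q_0q_0')$ gives item (i), after absorbing $T$ and all intermediate constants into a single $C$; the exponents compose to $-\frac{1}{2q_0q_0'q_1q_1'}$, matching the claim.

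For item (ii) I would argue exactly as at the end of the proof of \autoref{th 8.1}: by the exchangeability of the family $\{(Y^i,Z^{i,i})\}_{1\les i\les N}$, every summand in item (i) carries the same expectation, so the averaged bound in (i) is simultaneously the bound for each fixed index $i$. I do not anticipate a genuine obstacle; the only delicate step is the exponent accounting—verifying that $pq_0q_0'>2$ so that \autoref{le 8.1} may legitimately be invoked at the inflated exponent, and then confirming that the two independent decay rates (one from the propagation-of-chaos estimate, one from the Wasserstein concentration rate of Fournier--Guillin underlying \autoref{le 8.1}) compose multiplicatively to produce the final exponent $\frac{1}{2q_0q_0'q_1q_1'}$.
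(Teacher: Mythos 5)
Your proposal is correct and coincides with the paper's intended argument: the paper presents \autoref{th 8.2-1} as an immediate consequence of \autoref{re 8.1} and \autoref{le 8.1}, which is exactly the combination you carry out. Your exponent bookkeeping — invoking \autoref{le 8.1} at the inflated power $pq_0q_0'>2$, integrating in $t$, and raising to the power $1/(q_0q_0')$ to get the decay $N^{-\frac{1}{2q_0q_0'q_1q_1'}}$, with item (ii) following from exchangeability (or directly from item (ii) of \autoref{re 8.1}) — correctly supplies the details the paper leaves implicit.
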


%\begin{proof}
%It is a direct result of \autoref{re 8.1} and \autoref{le 8.1}.
%\end{proof}

%\begin{remark}\rm
%\autoref{th 8.2-1} shows that the rate of convergence of the particle systems toward to the solution of BSDE (\ref{MFBSDE})
%is $\(N^{-pq_0q'_0+\frac{1}{2}}+N^{-\frac{1}{3}}\)^\frac{1}{q_0q'_0}$.
%
%On the other hand, the assumption  $\sup\limits_{0\leq t\leq T}\mathbb{E}[|Z_t|^\gamma]\leq C_\gamma$ is not restrictive, since
%it can be guaranteed by some  Malliavin differentiability of the generator $g$. For more details, we refer the reader to  Cheridito and Nam \cite{Cheridito-Nam-14} and Kupper, Luo and Tangpi \cite{Kupper-Luo-Tangpi-19}.
% \end{remark}

\begin{remark}\rm
In \autoref{th 8.2-1}, if the generator $g$ depends on the law of $Z$, the convergence rate relies on estimating $\mathbb{E}\big [\mathcal{W}^{pq_0q'_0}_2 (\mu_t^N,\bar{\mu}_t)\big]$. However, this estimation is still an open problem at the moment. The difficulty arises from the fact that, in general, the process $Z$ does not possess uniform H\"older continuity in time:
$$
\mathbb{E}[|Z_t-Z_s|^2]\leq C|t-s|, \quad 0\leq s\leq t\leq T,
$$
which is true for the process $Y$ as a crucial  fact  used in the proof of \autoref{le 8.1} (see also Lauriere and Tangpi \cite[Lemma 3.2]{Lauriere-Tangpi-22} and Briand et al.
\cite[Lemma 1]{Brand-Cardaliaguet-Chaudru-Hu-2020}).

%
%In other words, Lemma 1  \cite{Brand-Cardaliaguet-Chaudru-Hu-2020} is
%%
%not be suitable for eatimating the case
%%
%$\mathbb{E}^\mathbb{Q}\Big [\sup\limits_{0\leq t\leq T}\mathcal{W}^2_2 (\mu_t^N,\bar{\mu}_t)\Big]$.
%
%We shall leave the case that $g$ depends on the law of $Z$ for further research.
\end{remark}

\subsection{The case that the generator is unbounded in the law of $Z$}

We make the following assumption.
\bas{ass 3.3}\rm  For $i=1,2,\cdot\cdot\cdot,N$, there exists a positive constant $K$ such that the terminal value $\eta^i:\Om\rightarrow\dbR$ and the generator $g: \Omega\times[0,T]\times\dbR\times\dbR^d\times \cP_2(\dbR)\times \cP_2(\dbR^{d})\rightarrow \dbR$, adapted the filtration of
$\dbF^i$, where $\dbF^i=\{\sF^i_t\}_{t\ges0}$ is the natural filtration of $W^i$ augmented by all the $\dbP$-null sets in $\sF^i$,
satisfy the following conditions:
\begin{enumerate}[~~\,\rm (i)]
\item $d\dbP\times dt$-a.e. $(\o,t)\in\Om\ts[0,T]$, for every $(y, z,\nu,\mu)\in \mathbb{R}\ts\mathbb{R}^d\ts\mathcal{P}_2(\mathbb{R})\ts\mathcal{P}_2(\mathbb{R}^{d}),$
\begin{equation*}
	|g(w,t,y,z,\nu,\mu)|\les \theta_t(\omega)+K|y|+\frac{\gamma}{2}|z|^2+K\cW_2(\nu,\d_{\{0\}})+\gamma_0\cW_2(\mu,\d_{\{0\}}). 
\end{equation*}
\item $d\dbP\times dt$-a.e. $(\o,t)\in\Om\ts[0,T]$, for every $y,\by\in \dbR, z,\bz\in \dbR^d$ and $\nu,\bar{\nu}\in \cP_2(\dbR)$,
$\m,\bar{\m}\in \cP_2(\dbR^{d})$,
$$\ba{ll}
& |g(\o,t,y,z,\nu,\m)-g(\o,t,\bar y,\bz,\bar\nu,\bar\mu)| \les K\big[|y-\bar{y}|+\cW_2(\nu,\bar{\nu})
+\cW_2(\mu,\bar{\mu})\big]\\
&\q + \phi\big(|y|\vee|\bar y|\vee\cW_2(\nu,\d_{\{0\}})\vee\cW_2(\bar \nu,\d_{\{0\}})\big) \cdot \big[(1+|z|+|\bar{z}|)|z-\bar{z}|\big].

\ea$$
\item  There are two positive constants $K_1$ and $K_2$ such that
$$\max_{1\les i\les N}\|\eta^i\|_\infty\les K_1\q\hb{and}\q \bigg\|\int_0^T\theta_t(\omega)dt\bigg\|_\infty\les K_2.$$
\end{enumerate}\eas

\begin{proposition}\label{22.8.29.1111} \it 
Let \autoref{ass 3.2-11} with $\a=1$ and \autoref{ass 3.3} hold true, there is a positive constant $C$, depending only on  $(K_1,K_2,T,\g, \tilde{\gamma},\gamma_0,\beta,\beta_0),$  such that the solution $(Y^i, Z^{i,j})$ of BSDE (\ref{8.2}) admits the following estimate: for each $i,j=1,...,N$,
\begin{equation}\label{22.8.26-1}
	||Y^i||_{S_\dbF^\infty(0,T)}\leq C\q\hbox{and}\q  ||Z^{i,j}||_{\cZ^2_\dbF(0,T)}\leq C.
\end{equation}
\end{proposition}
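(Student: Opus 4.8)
The plan is to transport the a priori estimates of \autoref{th 3.5-1} to the finite particle system \rf{8.2}, keeping every constant independent of $N$, and to treat the empirical-measure terms globally as in the proof of \autoref{22.8.29.1}. The decisive new feature is that in \rf{8.2} the two law terms are the \emph{random} empirical quantities $\cW_2(\nu^N_s,\d_{\{0\}})^2=\frac1N\sum_{k=1}^N|Y^k_s|^2$ and $\cW_2(\mu^N_s,\d_{\{0\}})^2=\frac1N\sum_{k=1}^N|Z^{k,k}_s|^2$, whereas in the mean-field equation the corresponding laws were deterministic; in particular the quadratic law-of-$Z$ term (recall $\a=1$) couples all $N$ equations through a single random process and must be absorbed at the level of the whole system. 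Existence and uniqueness of $(Y^i,Z^{i,j})$ for each fixed $N$ follows from the multidimensional quadratic theory of Fan, Hu, and Tang \cite{FHT}, once the empirical quadratic term $\frac{\gamma_0}{N}\sum_k|Z^{k,k}|^2$ is read as part of the (still quadratic) generator of the $N$-vector $(Y^1,\dots,Y^N)$; the real content of the proposition is the $N$-uniformity of the bounds.

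I would first run the exponential-weight argument of \autoref{th 3.5-1} particle by particle. Applying It\^o--Tanaka to $\Psi(t,|Y^i_t|)$ with
\[ \Psi(t,x)\deq\exp\Big\{\gamma x+\gamma\int_0^t\big(\theta_s+\beta|Y^i_s|+\beta_0\cW_2(\nu^N_s,\d_{\{0\}})+\gamma_0\cW_2(\mu^N_s,\d_{\{0\}})^2\big)ds\Big\} \]
and invoking the sign condition (i) of \autoref{ass 3.2-11} makes the drift nonnegative, so that conditioning yields the analogue of \rf{equ 3.21},
\[ |Y^i_t|\leq (K_1+K_2)+\int_t^T\big(\beta\|Y^i\|_{S_\dbF^\infty(s,T)}+\beta_0\max_k\|Y^k\|_{S_\dbF^\infty(s,T)}\big)ds+\gamma_0\int_t^T\cW_2(\mu^N_s,\d_{\{0\}})^2\,ds. \]
Since $\a=1$, the last term is a genuine empirical $|Z|^2$-energy that Young's inequality can no longer dilute, so I postpone its treatment to the $Z$-estimate below rather than absorbing it locally.

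The core step is the averaged $Z$-estimate. Using the lower bound \rf{4.7.4} of \autoref{ass 3.2-11}(ii) and the localizing times $\tau_k=T\wedge\inf\{s:\int_t^s|Z^{i,i}_r|^2dr\geq k\}$ exactly as in \autoref{th 3.5-1}, the quantity $\int_t^{\tau_k}\tfrac{\tilde{\gamma}}{2}|Z^{i,i}_s|^2ds$ is dominated, for each $i$, by $Y^i_t-Y^i_{\tau_k}$, the same linear-and-law terms, and an $\dbF$-martingale. Taking $\dbE_t[\cdot]$, summing over $i$ and dividing by $N$, the left-hand side becomes $\frac{\tilde{\gamma}}{2}\dbE_t\int_t^T\frac1N\sum_i|Z^{i,i}_s|^2ds$, while the law-of-$Z$ contribution on the right is exactly $\gamma_0\dbE_t\int_t^T\frac1N\sum_i|Z^{i,i}_s|^2ds$, because $\cW_2(\mu^N_s,\d_{\{0\}})^2$ does not depend on the summation index. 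Provided $\gamma_0<\tilde{\gamma}/2$ — the coefficient balance encoded in the stated dependence of $C$ on $(\tilde{\gamma},\gamma_0)$, which replaces the Young slack unavailable at $\a=1$ — these two quadratic averages absorb into one another and give
\[ \dbE_t\int_t^T\tfrac1N\sum_{i=1}^N|Z^{i,i}_s|^2ds\leq C\big(\max_i\|Y^i\|_{S_\dbF^\infty(0,T)}+K_2\big) \]
uniformly in $N$. Inserting this into the displayed $|Y^i_t|$-inequality and summing over $i$ closes a Gronwall argument for $\max_i\|Y^i\|_{S_\dbF^\infty(0,T)}$; the exponential moment of the empirical $Z$-energy needed to justify the conditioning in $\Psi$ is then supplied by this BMO bound through the reverse H\"older inequality (\autoref{22.9.1.1}), working on the small intervals produced by \autoref{th 3.2} and patching over the fixed number $\lceil T/\kappa\rceil$ of steps so that no constant degrades with $N$.

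To pass finally from the averaged energy to the individual full bracket $\sum_{j=1}^N|Z^{i,j}|^2$ — note that the off-diagonal entries $Z^{i,j}$, $j\neq i$, never enter $g$ and arise only from the martingale representation — I would apply It\^o to $\Phi(Y^i_t)$ with $\Phi(x)=\frac{1}{\gamma^2}[\exp(\gamma|x|)-\gamma|x|-1]$ and use $\Phi''-\gamma|\Phi'|=1$ as in \autoref{22.8.29.1}, now feeding in the two-sided growth (i) of \autoref{ass 3.3} together with the already-established bounds on $\|Y^i\|_\infty$ and on the empirical $Z$-energy to reach $\|Z^{i,j}\cdot W^j\|_{BMO(\dbP)}^2\leq C$, which is \rf{22.8.26-1}. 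The main obstacle throughout is the absorption in the averaged $Z$-estimate: at $\a=1$ the law-of-$Z$ term is truly quadratic, so the bound cannot rest on the sub-quadratic Young trick of \autoref{th 3.5-1} and instead requires the good coefficient $\tilde{\gamma}$ of the lower bound to dominate the bad coefficient $\gamma_0$; it is precisely the averaging $\frac1N\sum_i$ — legitimate because $\cW_2(\mu^N_s,\d_{\{0\}})^2$ is itself the $i$-independent empirical mean — that renders the two quadratic terms exactly comparable and produces a constant free of $N$.
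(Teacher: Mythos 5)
Your proposal is correct in outline and follows essentially the same route as the paper's own proof: existence for each fixed $N$ from \cite{FHT}, a sign-condition estimate for $|Y^i|$ that keeps the random empirical terms, the strict quadratic lower bound \rf{4.7.4} with localization summed over $i$ so that the quadratic empirical law-of-$Z$ term is absorbed into the averaged $Z$-energy, a Gronwall closure for the $Y$-bounds, and finally It\^{o}'s formula applied to $\Phi(Y^i)$ with $\Phi(x)=\gamma^{-2}[e^{\gamma|x|}-\gamma|x|-1]$ under \autoref{ass 3.3} to obtain the full BMO bound on $Z^{i,j}$. The only differences are bookkeeping: the paper closes the Gronwall step in three stages (first on $\frac{1}{N}\sum_i\|Y^i\|^2_{S^\infty_\dbF(t,T)}$, then on $\frac{1}{N}\sum_i\|Y^i\|_{S^\infty_\dbF(t,T)}$, then on each $\|Y^i\|_{S^\infty_\dbF(t,T)}$) using an $\e_0$-weighting of the $Z$-energy inequality rather than your maxima over $i$, and it buries the coefficient balance you state explicitly as $\gamma_0<\tilde\gamma/2$ inside the constants $L_3,L_4$ of \rf{equ 3.34-1}--\rf{equ 3.34-2}, which at $\alpha=1$ remain finite only under exactly such a smallness relation between $\gamma_0$ and $\tilde\gamma\e_0$.
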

\begin{proof}
Under \autoref{ass 3.2-11} with $\a=1$ and \autoref{ass 3.3}, the generator satisfies \cite[(H1)-(H5)]{FHT}. According to \cite[Theorem 2.5]{FHT},
the equation (\ref{8.2}) admits a unique solution $(Y,Z)\in S_\dbF^\infty(0,T;\dbR)\ts \cZ^2_\dbF(0,T;\dbR^d)$. Next, we show that 
the bounds of $||Y^i||_{S_\dbF^\infty(0,T)}$ and  $||Z^{i,j}||_{\cZ^2_\dbF(0,T)}$ do not depend on $N$.
Similar to (\ref{equ 3.25}) and (\ref{equ 3.322222}), we have, 
for $t\in[T-\kappa,T]$,
\begin{equation}\label{5.5}
	\begin{aligned}
		|Y^i_t|  &\leq (K_1+K_2)+L_3T+\int_t^T\(\beta\|Y^i\|_{S_\dbF^\infty(s,T)} \\
		& \q  
		+\beta_0  \cW_2(\nu_s^N,\d_{\{0\}})   
		+\frac{\tilde{\gamma}\e_0}{2} \cW_2(\mu_s^N,\d_{\{0\}})^2\)ds\\
		&\leq (K_1+K_2)+L_3T+\int_t^T\(\beta\|Y^i\|_{S_\dbF^\infty(s,T)} 
		+ \beta_0 \Big\{\frac{1}{N}\sum_{i=1}^N\|Y^i\|^2_{S_\dbF^\infty(s,T)}\Big\}^\frac{1}{2} \\
		&\q + \frac{\tilde{\gamma}\e_0}{2}\frac{1}{N}\sum_{i=1}^N|Z^{i,i}_s|^2\)ds,
	\end{aligned}
\end{equation}
and

\begin{equation} \label{5.6}
	\begin{aligned}
		&\int_t^T\frac{\tilde{\gamma}\e_0}{2}|Z^{i,i}_s|^2ds\\
		&\leq \e_0(2+T\b)\|Y^i\|_{S_\dbF^\infty(t,T)}+\e_0K_2+ \e_0T\b_0 \Big\{\frac{1}{N}\sum_{i=1}^N\|Y^i\|^2_{S_\dbF^\infty(t,T)}\Big\}^\frac{1}{2}\\
		&\q +\int_t^T\frac{\tilde{\gamma}\e_0}{4} \frac{1}{N}\sum_{i=1}^{N}   |Z^{i,i}_s|^2ds+L_4T+\e_0\int_t^T\sum_{j=1}^{N}Z^{i,j}_sdW^j_s,
	\end{aligned}
\end{equation}
where $L_3$ and $L_4$ are defined in (\ref{equ 3.34-1}) and (\ref{equ 3.34-2}), respectively.
Summing over $i$ on both sides of the last inequality, we have from Gronwall's inequality that 

\begin{equation} \label{5.7}
	\begin{aligned}
		&\int_t^T\frac{\tilde{\gamma}\e_0}{2} \frac{1}{N}\sum_{i=1}^{N} |Z^{i,i}_s|^2ds\\
		&\leq 2\e_0(2+T\b)\frac{1}{N}\sum_{i=1}^{N} \|Y^i\|_{S_\dbF^\infty(t,T)}+2\e_0K_2+ 2\e_0T\b_0 \Big\{\frac{1}{N}\sum_{i=1}^N\|Y^i\|^2_{S_\dbF^\infty(t,T)}\Big\}^\frac{1}{2}\\
		&\q +2L_4T+2\e_0\int_t^T\frac{1}{N}\sum_{i=1}^{N}\sum_{j=1}^{N}Z^{i,j}_sdW^j_s.
	\end{aligned}
\end{equation}
Combining (\ref{5.5}) and (\ref{5.7}),  we have

\begin{equation*} 
	\begin{aligned}
		|Y^i_t|  &\leq (K_1+K_2)+L_3T+\int_t^T\(\beta\|Y^i\|_{S_\dbF^\infty(s,T)} 
		+ \beta_0 \Big\{\frac{1}{N}\sum_{i=1}^N\|Y^i\|^2_{S_\dbF^\infty(s,T)}\Big\}^\frac{1}{2}\)ds \\
		&\q +2\e_0(2+T\b)\frac{1}{N}\sum_{i=1}^{N} \|Y^i\|_{S_\dbF^\infty(t,T)}+2\e_0K_2+ 2\e_0T\b_0 \Big\{\frac{1}{N}\sum_{i=1}^N\|Y^i\|^2_{S_\dbF^\infty(t,T)}\Big\}^\frac{1}{2}\\
		&\q +2L_4T+2\e_0\int_t^T\frac{1}{N}\sum_{i=1}^{N}\sum_{j=1}^{N}Z^{i,j}_sdW_s^j.
	\end{aligned}
\end{equation*}
Taking the conditional expectation $\dbE_t[\cd]$, we have
\begin{equation}\label{5.8}
	\begin{aligned}
		\|Y^i\|_{S_\dbF^\infty(t,T)}  &\leq (K_1+K_2)+L_3T+2L_4T+ 2\e_0K_2\\
		&\q +\int_t^T\(\beta\|Y^i\|_{S_\dbF^\infty(s,T)} 
		+ \beta_0 \Big\{\frac{1}{N}\sum_{i=1}^N\|Y^i\|^2_{S_\dbF^\infty(s,T)}\Big\}^\frac{1}{2}\)ds \\
		&\q +2\e_0(2+T\b)\frac{1}{N}\sum_{i=1}^{N} \|Y^i\|_{S_\dbF^\infty(t,T)}+ 2\e_0T\b_0 \Big\{\frac{1}{N}\sum_{i=1}^N\|Y^i\|^2_{S_\dbF^\infty(t,T)}\Big\}^\frac{1}{2}, 
	\end{aligned}
\end{equation}
which, combining  Cauchy-Schwarz inequality, implies that
\begin{equation*}\label{5.9}
	\begin{aligned}
		\|Y^i\|^2_{S_\dbF^\infty(t,T)}  &\leq 5[(K_1+K_2)+L_3T+2L_4T+ 2\e_0K_2]^2\\
		&\q +5\beta^2 T\int_t^T\|Y^i\|^2_{S_\dbF^\infty(s,T)}ds 
		+5\beta_0^2 T \int_t^T\frac{1}{N}\sum_{i=1}^N\|Y^i\|^2_{S_\dbF^\infty(s,T)} ds \\
		&\q +20\e_0^2 [(2+T\b)^2+ T^2\b_0^2]\frac{1}{N}\sum_{i=1}^N\|Y^i\|^2_{S_\dbF^\infty(t,T)}.
	\end{aligned}
\end{equation*}
Summing over $i$ on both sides of the last inequality, we see 
\begin{equation*} 
	\begin{aligned}
		\frac{1}{N}\sum_{i=1}^{N}\|Y^i\|^2_{S_\dbF^\infty(t,T)}  &\leq 5[(K_1+K_2)+L_3T+2L_4T+ 2\e_0K_2]^2\\
		&\q +5T(\beta^2+\beta_0^2) \int_t^T\frac{1}{N}\sum_{i=1}^{N}\|Y^i\|^2_{S_\dbF^\infty(s,T)}ds \\
		&\q +20\e_0^2 [(2+T\b)^2+ T^2\b_0^2]\frac{1}{N}\sum_{i=1}^N\|Y^i\|^2_{S_\dbF^\infty(t,T)}.
	\end{aligned}
\end{equation*}
Choosing $\e_0$ such that $20\e_0^2 [(2+T\b)^2+ T^2\b_0^2]=\frac{1}{2}$,  and using Gronwall lemma, we have 
\begin{equation}\label{5.11}
	\begin{aligned}
		\frac{1}{N}\sum_{i=1}^{N}\|Y^i\|^2_{S_\dbF^\infty(t,T)}  &\leq C_1,
	\end{aligned}
\end{equation}
where
\begin{equation*} 
	\begin{aligned}
		C_1=10[(K_1+K_2)+L_3T+2L_4T+ 2\e_0K_2]^2e^{10T^2(\beta^2+\beta_0^2) }.
	\end{aligned}
\end{equation*}
Insert  (\ref{5.11}) into (\ref{5.8}),  we have 
\begin{equation*} 
	\begin{aligned}
		\|Y^i\|_{S_\dbF^\infty(t,T)}  &\leq (K_1+K_2)+L_3T+2L_4T+ 2\e_0K_2+\b_0T\sqrt{C_1}+2\e_0T\b_0\sqrt{C_1} \\
		&\q +\int_t^T\beta\|Y^i\|_{S_\dbF^\infty(s,T)} ds +2\e_0(2+T\b)\frac{1}{N}\sum_{i=1}^{N} \|Y^i\|_{S_\dbF^\infty(t,T)}.
	\end{aligned}
\end{equation*}
Summing over $i$ on both sides of the last inequality,  from the choice of $\e_0$ and  Gronwall inequality, we have 
\begin{equation}\label{5.12}
	\begin{aligned}
		\frac{1}{N}\sum_{i=1}^{N} \|Y^i\|_{S_\dbF^\infty(t,T)}  &\leq C_2,
	\end{aligned}
\end{equation}
where
\begin{equation*} 
	\begin{aligned}
		C_2=2[(K_1+K_2)+L_3T+2L_4T+ 2\e_0K_2+\b_0T\sqrt{C_1}+2\e_0T\b_0\sqrt{C_1}]  e^{2\b T}.
	\end{aligned}
\end{equation*}
Finally, according to  (\ref{5.8}), (\ref{5.11}) and (\ref{5.12}),  we obtain from Gronwall lemma, 
\begin{equation*}
	\begin{aligned}
		\|Y^i\|_{S_\dbF^\infty(t,T)}  &\leq C_3, \quad i=1,2,\cds N, 
	\end{aligned}
\end{equation*}
where
\begin{equation*}
	\begin{aligned}
		C_3&=[(K_1+K_2)+L_3T+2L_4T+ 2\e_0K_2+(1+2\e_0)\b_0T\sqrt{C_1}+2\e_0(2+T\b)C_2]e^{\b T}. 
	\end{aligned}
\end{equation*}

Now we prove that $Z^{i,j}\cdot W^j$ is a BMO martingale. Define
\begin{equation*}
	\Phi(x)=\frac{1}{\gamma^2}\big[\exp(\gamma|x|)-\gamma|x|-1\big].
\end{equation*}
Again, for BSDE (\ref{8.2}), applying It\^{o}'s formula to $\Phi(Y^i_t)$, we have
\begin{equation*}\label{22.9.20.1}
	\begin{aligned}
		\Phi(Y^i_t)
		=&\ \Phi(Y^i_T)+\int_{t}^{T}\Phi'(Y^i_s)g\big(s,Y^i_s,Z^i_s,\nu_s^N, \mu_s^N \big)ds \\
		&   -\int_{t}^{T}\Phi'(Y^i_s)\sum\limits_{j=1}^NZ^{i,j}_sdW^j_s
		-\frac{1}{2}\int_{t}^{T}\Phi''(Y^i_s)\sum\limits_{j=1}^N|Z^{i,j}_s|^2ds \nn\\
		\les &\ \Phi(\eta^i)+\int_{t}^{T}  |\Phi'(Y^i_s)|
		\Big(\theta_s+K\big[|Y^i_s|+ \cW_2(\nu_s^N, \delta_{\{0\}})\big]+\gamma_0\cW_2(\mu_s^N,\d_{\{0\}})\Big)ds
		\nn\\
		& -\int_{t}^{T}\Phi'(Y^i_s)\sum\limits_{j=1}^NZ^{i,j}_sdW^j_s +\frac{1}{2}\int_{t}^{T}\(\gamma|\Phi'(Y^i_s)| -\Phi''(Y^i_s)\)\sum\limits_{j=1}^N|Z^{i,j}_s|^2ds. \nn
	\end{aligned}
\end{equation*}
Taking the conditional expectation $\mathbb{E}_t[\cd]$ on both sides, we have
\begin{equation*}
	\begin{aligned}
		& \Phi(Y^i_t)+\frac{1}{2}\mathbb{E}_t\bigg[\int_{t}^{T}\sum\limits_{j=1}^N|Z^{i,j}_s|^2ds \bigg]   \\
		&\les \ \Phi(\eta^i)+\mathbb{E}_t\bigg[\int_{t}^{T}|\Phi'(Y^i_s)|
		\Big(\theta_s+K\big[|Y^i_s|+ \cW_2(\nu_s^N, \delta_{\{0\}})\big]+\gamma_0\cW_2(\mu_s^N,\d_{\{0\}})\Big)ds\bigg]\\
		&\les \ \Phi(K_1)+|\Phi'(C_3)|\mathbb{E}_t\bigg[\int_{t}^{T}
		\Big(\theta_s+K |Y^i_s|+ K\Big\{\frac{1}{N}\sum\limits_{i=1}^N|Y^i_s|^2\Big\}^\frac{1}{2}\\
		&\hskip 4.2cm
		+\g_0 \Big\{\frac{1}{N}\sum\limits_{i=1}^N|Z^{i,i}_s|^2\Big\}^\frac{1}{2} \Big)ds\bigg].\\
	\end{aligned}
\end{equation*}
According to (\ref{equ 3.22}), we have
\begin{equation*}
	\begin{aligned}
		&|\Phi'(C_3)|\g_0 \Big\{\frac{1}{N}\sum\limits_{i=1}^N|Z^{i,i}_s|^2\Big\}^\frac{1}{2} 
		=\frac{1}{4}\Big[4|\Phi'(C_3)|\g_0 \Big\{\frac{1}{N}\sum\limits_{i=1}^N|Z^{i,i}_s|^2\Big\}^\frac{1}{2}\Big] \\
		&\leq \frac{1}{4}\Big\{\frac{1}{N}\sum\limits_{i=1}^N|Z^{i,i}_s|^2\Big\}+\frac{1}{16}\(4|\Phi'(C_3)|\g_0\)^2. 
	\end{aligned}
\end{equation*}
Combining the last two inequalities, we have
\begin{equation*}
	\begin{aligned}
		\frac{1}{2}\mathbb{E}_t\bigg[\int_{t}^{T}\sum\limits_{j=1}^N|Z^{i, j}_s|^2ds \bigg]
		\les &\ C_4+\frac{1}{4}\mathbb{E}_t\bigg[\int_{t}^{T}\frac{1}{N}\sum\limits_{i=1}^N|Z^{i,i}_s|^2ds \bigg],
	\end{aligned}
\end{equation*}
where $$C_4=\Phi(K_1)+|\Phi'(C_3)| \Big(K_2+ KC_3+K\sqrt{C_1}  \Big)+\frac{1}{16}\(4|\Phi'(C_3)|\g_0\)^2.$$
Summing over $i$ on both sides of the above inequality, we get
\begin{equation*}
	\begin{aligned}
		\mathbb{E}_t\bigg[\int_{t}^{T}\frac{1}{N}\sum\limits_{i=1}^N|Z^{i,i}_s|^2ds \bigg]\les   4C_4.
	\end{aligned}
\end{equation*}
Consequently, 
\begin{equation*}
	\begin{aligned}
		\mathbb{E}_t\bigg[\int_{t}^{T}\sum\limits_{j=1}^N|Z^{i, j}_s|^2ds \bigg]
		\les &\ 4C_4.
	\end{aligned}
\end{equation*}
Then, we have
\begin{equation*}
	\begin{aligned}
		\mathbb{E}_t\bigg[\int_{t}^{T} |Z^{i, j}_s|^2ds \bigg]
		\les &\ 4C_4, \quad i, j=1,2, \cds, N.
	\end{aligned}
\end{equation*}
Hence, for $t\in[T-\kappa,T],$
\begin{equation*}
	\|Z^{i,j}\|^2_{ \cZ^2_\dbF(t,T)}
	=\|Z^{i,j}\cdot W^j\|^2_{BMO([t,T])}\leq4C_4. 
\end{equation*}
Finally,  similar to the proof of \autoref{th 3.5}, we have (\ref{22.8.26-1}).
\end{proof}

Note that the proofs of convergence and convergence rate for the particle systems \rf{8.2}-\rf{8.2.2} only rely on the uniformly Lipschitz continuity of $g$ and do not require the boundedness assumption on $g$. Therefore, the proofs under \autoref{ass 3.2-11} and \autoref{ass 3.3} can be considered the same as those under \autoref{ass 3.3-0}. Thus, we can state the results without including the proof.

%{\color{blue}
\begin{theorem}\label{th 8.5}\it 
	Under \autoref{ass 3.2-11} with $\a=1$ and \autoref{ass 3.3}, for any $p\geq2$, there exist two constants $q_0, q'_0>1$ and a constant $C>0$, depending only on $(K,K_1,K_2,T,\phi(\cd),\g,p,q_0,q'_0)$, such that, for $i=1,\cds, N,$
	\begin{align*}
		\mathrm{(i)}\quad  &\mathbb{E}\Big[\frac{1}{N}\sum\limits_{i=1}^N\Big\{\sup\limits_{t\in[0,T]}|\Delta Y_t^i|^p
		+\(\int_0^T \sum\limits_{j=1}^N|\Delta Z^{i,j}_t|^2dt\)^\frac{p}{2}\Big\}\Big] \nn\\
		&\leq C\mathbb{E}\Big[\int_0^T \mathcal{W}^{pq_0q'_0}_2(\nu_t^N,\bar{\nu}_t)dt+ \int_0^T \mathcal{W}^{pq_0q'_0}_2(\mu_t^N,\bar{\mu}_t)dt\Big]^\frac{1}{q_0q'_0},\nn\\
		\mathrm{(ii)}\quad &
		\mathbb{E}\Big\{\sup\limits_{t\in[0,T]}|\Delta Y_t^i|^p+\(\int_0^T \sum\limits_{j=1}^N
		|\Delta Z^{i,j}_t|^2dt\)^\frac{p}{2}\Big\}\\
		&\leq C\mathbb{E}\Big[\int_0^T \mathcal{W}^{pq_0q'_0}_2(\nu_t^N,\bar{\nu}_t)dt+ \int_0^T \mathcal{W}^{pq_0q'_0}_2(\mu_t^N,\bar{\mu}_t)dt\Big]^\frac{1}{q_0q'_0}.\nn
		%.
	\end{align*}
\end{theorem}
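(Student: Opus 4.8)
The plan is to follow the proof of \autoref{th 8.1} line by line, the only modification being that the $N$-uniform a priori bounds now come from \autoref{22.8.29.1111} in place of \autoref{22.8.29.1}; indeed, as noted in the remark preceding the statement, boundedness of $g$ in $\mu$ was never used in the convergence argument itself, only uniform Lipschitz continuity together with $N$-uniform BMO control of the martingale parts.

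First I would form the difference system for $(\Delta Y^i,\Delta Z^{i,j})$ from \rf{8.2}--\rf{8.2.2} using the notation \rf{22.8.13.1}, and decompose the generator increment as $g(t,Y^i_t,Z^{i,i}_t,\nu^N_t,\mu^N_t)-g(t,\bar Y^i_t,\bar Z^i_t,\bar\nu_t,\bar\mu_t)=J_{1,t}+J_{2,t}$, where $J_{2,t}=\Gamma^i_t\,\Delta Z^{i,i}_t$ isolates the $z$-increment via item (ii) of \autoref{ass 3.3}, with $|\Gamma^i_t|\les\phi(|\bar Y^i_t|\vee\|\bar Y^i_t\|_{L^2(\Om)})(1+|Z^{i,i}_t|+|\bar Z^{i,i}_t|)$. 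The whole argument hinges on the estimate $\|\Gamma^i\|_{\cZ^2_\dbF(0,T)}\les C$, uniformly in $i$ and $N$. This is exactly where \autoref{22.8.29.1111} enters: under \autoref{ass 3.2-11} (with $\a=1$) and \autoref{ass 3.3} it supplies the $N$-free bounds $\|Y^i\|_{S_\dbF^\infty(0,T)}\les C$ and $\|Z^{i,j}\|_{\cZ^2_\dbF(0,T)}\les C$, which, combined with the corresponding bounds for the limit system $(\bar Y^i,\bar Z^i)$ from \autoref{th 3.5-1}, control $\|\Gamma^i\|_{\cZ^2_\dbF(0,T)}$ and hence $\|\Gamma\cd W^j\|_{BMO(\dbP)}$; the reverse H\"older inequality \autoref{22.9.1.1} is then applicable with $N$-independent constants.

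With this bound in hand, the rest is identical to \autoref{th 8.1}: I would introduce the equivalent measure $d\dbQ^i=\sE^i(\Gamma\cd W^j)_0^Td\dbP$ that shifts only the $i$-th driving noise, so that $J_{2,t}$ is absorbed and the difference equation is driven by $(\widetilde W^{j,i})$ with generator $J_{1,t}$; the a priori estimate of Briand et al. \cite{Briand-Delyon-Hu-Pardoux-Stoica-03} under $\dbQ^i$ then yields the bound with $\cW_2^p(\nu^N,\bar\nu)$ and $\cW_2^p(\mu^N,\bar\mu)$ on the right. Two successive applications of \autoref{22.9.1.1} and H\"older's inequality --- one to transfer the Wasserstein term from $\dbE^{\dbQ^i}$ to $\dbE^\dbP$ (producing the exponent $q_0$) and one to transfer the left-hand side back from $\dbE^{\dbQ^i}$ to $\dbE^\dbP$ (producing $q'_0$) --- give item (i) after summing over $i$ and dividing by $N$; item (ii) then follows from the exchangeability of $(Y^i,Z^{i,i})$.

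The main obstacle is the uniform-in-$N$ BMO control of $Z^{i,j}$ (equivalently of $\Gamma^i$): this is the single nontrivial ingredient, and it is precisely what \autoref{22.8.29.1111} delivers. There, the strictly quadratic growth in $z$ (condition (ii) of \autoref{ass 3.2-11}, taken with $\a=1$) is used, via a Young-type absorption of the quadratic law-of-$Z$ term against $\frac{\tilde{\gamma}}{2}|Z|^2$ followed by summation over $i$ and Gronwall's inequality, to keep the bound independent of $N$. Once this is secured, no idea beyond uniform Lipschitz continuity and the reverse H\"older inequality is required, so the displayed estimates of \autoref{th 8.1} carry over verbatim.
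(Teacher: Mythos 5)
Your proposal is correct and is essentially the paper's own proof: the paper explicitly omits the argument for this theorem, stating that the convergence proof relies only on the uniform Lipschitz continuity of $g$ and therefore carries over from \autoref{th 8.1} once the $N$-uniform a priori bounds are taken from \autoref{22.8.29.1111} in place of \autoref{22.8.29.1}. Your outline --- the $J_{1}+J_{2}$ decomposition with $J_{2,t}=\Gamma^i_t\Delta Z^{i,i}_t$, the per-coordinate Girsanov change of measure $d\dbQ^i=\sE^i(\Gamma\cd W^j)_0^T d\dbP$, the $L^p$ estimate of Briand et al., the two reverse-H\"older/H\"older transfers producing $q_0$ and $q_0'$, and exchangeability for item (ii) --- is precisely the argument of \autoref{th 8.1} that the paper invokes.
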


\begin{theorem}\label{th 8.2-1} \it 
	Let \autoref{ass 3.2-11} with $\alpha=1$ and \autoref{ass 3.3} be in force, and assume that the generator $g$ is independent of the law of $Z$. 
	Then, for any $p\geq2$, there exist four constants $q_0,q'_0,q_1,q'_1>1$ and a constant   $C>0$, depending only on $(K,K_1,K_2,T,\phi(\cd),\g,p,q_0,q'_0,q_1,q'_1)$, such that, for $i=1,\cds,N,$
	\begin{equation*}
		\begin{aligned}
			\mathrm{(i)}\quad  &\mathbb{E}\Big[\frac{1}{N}\sum\limits_{i=1}^N\Big\{\sup\limits_{t\in[0,T]}|\Delta Y_t^i|^p
			+\(\int_0^T \sum\limits_{j=1}^N|\Delta Z^{i,j}_t|^2dt\)^\frac{p}{2}\Big\}\Big]
			\leq CN^{-\frac{1}{2q_0q'_0q_1q'_1}},\\
			\mathrm{(ii)}\quad &
			\mathbb{E}\Big[\sup\limits_{t\in[0,T]}|\Delta Y_t^i|^p+\(\int_0^T \sum\limits_{j=1}^N
			|\Delta Z^{i,j}_t|^2dt\)^\frac{p}{2}\Big]
			\leq CN^{-\frac{1}{2q_0q'_0q_1q'_1}}.
		\end{aligned}
	\end{equation*}
\end{theorem}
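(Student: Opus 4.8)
The plan is to obtain the result exactly as in the convergence-rate statement already established under \autoref{ass 3.3-0} (the one deduced from \autoref{le 8.1} and \autoref{re 8.1}), namely by combining two ingredients and then chaining their estimates: first, a convergence estimate that controls $\Delta Y^i,\Delta Z^{i,j}$ by a Wasserstein distance between the empirical law $\nu^N$ and the limiting law $\bar\nu$; and second, a law-of-large-numbers rate quantifying how fast $\nu^N_t$ approaches $\bar\nu_t$. Since the generator $g$ is assumed independent of the law of $Z$, the $\mu^N$-term drops, and \autoref{th 8.5} specializes (just as \autoref{re 8.1} does under \autoref{ass 3.3-0}) to
$$\mathbb{E}\Big[\tfrac1N\sum_{i=1}^N\Big\{\sup_{t\in[0,T]}|\Delta Y_t^i|^p+\big(\int_0^T\sum_{j=1}^N|\Delta Z^{i,j}_t|^2dt\big)^{p/2}\Big\}\Big]\leq C\,\mathbb{E}\Big[\int_0^T\mathcal{W}_2^{pq_0q_0'}(\nu_t^N,\bar\nu_t)\,dt\Big]^{1/(q_0q_0')}.$$
Thus it remains to establish the analogue of \autoref{le 8.1}, namely $\sup_{t\in[0,T]}\mathbb{E}[\mathcal{W}_2^p(\nu_t^N,\bar\nu_t)]\leq CN^{-1/(2q_1q_1')}$ under \autoref{ass 3.2-11} with $\alpha=1$ and \autoref{ass 3.3}, and then to feed the latter into the former.

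The analogue of \autoref{le 8.1} will be proved by rerunning its proof verbatim, the single substitution being that the uniform-in-$N$ a priori bounds $\|Y^i\|_{S_\dbF^\infty(0,T)}\leq C$ and $\|Z^{i,j}\|_{\cZ^2_\dbF(0,T)}\leq C$ are now supplied by \autoref{22.8.29.1111} in place of \autoref{22.8.29.1}. Concretely, I would introduce the i.i.d.\ copies $\widetilde{Y}^i$ solving the decoupled equation \rf{8.1.000}, linearize the $z$-difference through a process $\Theta^i$ with $|\Theta^i_t|\leq\phi(|Y_t^i|\vee\|Y_t^i\|_{L^2(\Om)})(1+|\widetilde{Z}^i_t|+|Z^{i,i}_t|)$, and perform the $i$-th Girsanov change of measure $d\widehat{\dbQ}^i=\sE^i(\Theta\cdot W^j)_0^T\,d\dbP$. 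The bounds from \autoref{22.8.29.1111} guarantee $\|\Theta^i\cdot W^j\|_{BMO(\dbP)}\leq C$ uniformly in $N$, so the reverse H\"older inequality (\autoref{22.9.1.1}) yields exponents $p_1,p_1'>1$, independent of $N$, with $\mathbb{E}[(\sE^i(\Theta\cdot W^j)_0^T)^{p_1}]\leq C_{p_1}$ and $\mathbb{E}^{\widehat{\dbQ}^i}[(\sE^i((-\Theta)\cdot\widehat{W}^{j,i})_0^T)^{p_1'}]\leq C_{p_1'}$; setting $q_1=p_1/(p_1-1)$ and $q_1'=p_1'/(p_1'-1)$ reproduces the chain \rf{8.1.006}--\rf{8.1.009}. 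Finally, since $\|\widetilde{Y}^i\|_\infty\leq M_1$ the marginals $\bar\nu_t$ have moments of every order uniformly in $t$, so Fournier and Guillin \cite[Theorem 1]{Fournier-Guillin-15} applies and gives the claimed $N^{-1/(2q_1q_1')}$ bound.

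For the combination I would apply the specialized \autoref{th 8.5} with $p$ kept fixed, then invoke the analogue of \autoref{le 8.1} with its exponent taken to be $pq_0q_0'$ (admissible since $pq_0q_0'>2$) to get $\sup_t\mathbb{E}[\mathcal{W}_2^{pq_0q_0'}(\nu_t^N,\bar\nu_t)]\leq CN^{-1/(2q_1q_1')}$; integrating in $t$ over $[0,T]$ by Fubini and raising to the power $1/(q_0q_0')$ produces the rate $CN^{-1/(2q_0q_0'q_1q_1')}$, which is item~(i). Item~(ii) then follows from~(i) by the exchangeability of the family $(Y^i,Z^{i,i})_{1\leq i\leq N}$, exactly as at the end of the proof of \autoref{th 8.5}.

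The main difficulty, and the whole reason a separate argument is warranted, is the \emph{uniformity in $N$} of every constant entering the Girsanov/reverse-H\"older machinery: the exponents $p_1,p_1',q_1,q_1'$ (and likewise $q_0,q_0'$) must be chosen independently of the number of particles, which hinges on the uniform BMO bound $\|\Theta^i\cdot W^j\|_{BMO(\dbP)}\leq C$. This is precisely the content of \autoref{22.8.29.1111}, whose averaging/Gronwall estimates \rf{5.5}--\rf{5.12} and accompanying BMO control of $Z^{i,j}\cdot W^j$ must be in hand before the present argument can run. A second, essential point is the hypothesis that $g$ does not depend on the law of $Z$: as noted in the preceding remark, no $N^{-1/2}$-type rate for $\mathbb{E}[\mathcal{W}_2(\mu_t^N,\bar\mu_t)]$ is presently available, since $Z$ lacks the uniform time-regularity $\mathbb{E}[|Z_t-Z_s|^2]\leq C|t-s|$ enjoyed by the bounded process $Y$; dropping the $\mu^N$-term is exactly what lets the estimate close using the $Y$-regularity alone.
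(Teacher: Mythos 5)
Your proposal is correct and follows essentially the same route as the paper, which states this theorem without a separate proof precisely because the argument is the one you describe: rerun the convergence estimate (Theorem \ref{th 8.5}, specialized to a generator independent of the law of $Z$) and the Wasserstein-rate lemma (Lemma \ref{le 8.1}) verbatim, with the uniform-in-$N$ a priori and BMO bounds now supplied by Proposition \ref{22.8.29.1111} in place of Proposition \ref{22.8.29.1}, then chain the two estimates and use exchangeability for item (ii). Your identification of the two essential points — uniformity in $N$ of the reverse-H\"older exponents via the BMO bounds, and the necessity of dropping the $\mu^N$-term — matches the paper's own justification.
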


%}

%%%%%%%%%%%%%%%%%%%%%%%%%%%%%%%%%%%%%%%%%%%%%%%%%%%%%%%%%%%%%%%%%%%%%%%%%%%%%
\section{Applications to PDE}\label{Sec5}

The master equation can be regarded as a PDE in the Wasserstein space, whose state variable refers to the distribution of some state process. However, finding the viscosity solution of the master equation is always a challenging topic. This is due to the Wasserstein space lacking local compactness, which is a necessary element in viscosity theory (see Wu and Zhang \cite{WZ}). 
Furthermore, studying general mean-field BSDEs and the associated master equations with quadratic growth adds further complexity to the problem. In addition to quadratic growth, the nonlinearity of the coefficient $g$ in mean-field BSDEs can lead to time-inconsistency of the value function. The regularity of the value function under weak formulation is also more involved, even for the Lipschitz case.
Due to these challenges, instead of considering the distribution of the state process, it may be more appropriate to consider situations where the coefficients depend on the expectation of the state process. This alternative approach takes into account the expectations of the state process, rather than its distribution.

In detail, for any initial pair $(t,x)\in[0,T]\ts\dbR^n$ and frozen $x_0\in\dbR^n$, we consider the following mean-field forward-backward stochastic differential equation:
\begin{equation}\label{FBSDE2}
\left\{\begin{aligned}
	dX^{t,x}_s&=\dbE'\big[b(s,(X^{0,x_0}_s)',X^{t,x}_s)]ds
	+\dbE'[\sigma(s,(X^{0,x_0}_s)',X^{t,x}_s)\big]dW_s,\\
	-dY^{t,x}_s&=\dbE'\big[g(s,(X^{0,x_0}_s)',X^{t,x}_s,(Y^{0,x_0}_s)',Y^{t,x}_s,Z^{t,x}_s)\big]ds
	- Z^{t,x}_sdW_s,\q~ t\les s\les T, \\
	X^{t,x}_t&=x\in\dbR^n, \q~ Y^{t,x}_T=\dbE'\big[\Phi((X^{0,x_0}_T)',X^{t,x}_T)\big],
\end{aligned}\right.
\end{equation}
where the coefficients $b:[0,T]\times \dbR^n\times \dbR^n\rightarrow \dbR^n$ and
$\sigma:[0,T]\times \dbR^n\times \dbR^n\rightarrow \dbR^{n\times d}$
satisfy the following condition.
\bas{ass 5.2-1}\rm
There is a positive constant $C$ such that for any $t\in[0,T]$, $ x',x,\bar x',\bar x\in\dbR^n$,
$$|b(t,x',x)|+|\sigma(t,x',x)|\leq C(1+|x|+|x'|),$$
and
$$|b(t, x',x)-b(t,\bar x',\bar x)|+|\sigma(t,x',x)-\sigma(t,\bar x',\bar x)|
\leq C(|x'-\bar x'|+|x-\bar x|).$$
\eas
Then under \autoref{ass 5.2-1}, it is well known that the forward equation of \rf{FBSDE2} admits a unique adapted solution, denoted by $\{X^{t,x}_s;t\les s\les T\}$, in the space $S^2_{\dbF}(t,T;\dbR^n)$. Besides, the following result holds (see Buckdahn, Li, and Peng \cite{BLP}).
\begin{lemma}\label{pro 5.2}\it 
For every $p\geq2$, there exists a positive constant $C_p$ such that for any $t\in[0,T]$, $\d\in[0,T-t]$, and $x,x'\in \dbR^n$,
\begin{align*}
	& \dbE_t\Big[\sup\limits_{s\in[t,T]}|X^{t,x}_s-X^{t,x'}_s|^p\Big]\leq C_p|x-x'|^p,\\
	& \dbE_t\Big[\sup\limits_{s\in[t,T]}|X^{t,x}_s|^p\Big]\leq C_p(1+|x|^p),\\
	& \dbE_t\Big[\sup\limits_{s\in[t,t+\d]}|X^{t,x}_s-x|^p\Big]\leq C_p(1+|x|^p)\delta^\frac{p}{2}.
\end{align*}
\end{lemma}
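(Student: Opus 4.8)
The plan is to observe that, once the ``frozen'' McKean--Vlasov trajectory $\{X^{0,x_0}_s\}_{s\in[0,T]}$ is fixed, the forward equation for $X^{t,x}$ becomes a \emph{classical} SDE driven by $W$, so that \autoref{pro 5.2} reduces to the standard $L^p$-theory of It\^o equations. Indeed, put
$$\tilde b(s,\xi)\deq \dbE'\big[b(s,(X^{0,x_0}_s)',\xi)\big],\q \tilde\sigma(s,\xi)\deq\dbE'\big[\sigma(s,(X^{0,x_0}_s)',\xi)\big],\q (s,\xi)\in[0,T]\ts\dbR^n,$$
where $\dbE'$ integrates only the independent copy $(X^{0,x_0}_s)'$. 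These are \emph{deterministic} maps of $(s,\xi)$, and from \autoref{ass 5.2-1} together with $\sup_{s}\dbE|X^{0,x_0}_s|<\infty$ (which is part of the well-posedness of \rf{FBSDE2} recalled above) they inherit, uniformly in $s\in[0,T]$, the linear growth $|\tilde b(s,\xi)|+|\tilde\sigma(s,\xi)|\les C(1+|\xi|)$ and the Lipschitz bound $|\tilde b(s,\xi)-\tilde b(s,\bar\xi)|+|\tilde\sigma(s,\xi)-\tilde\sigma(s,\bar\xi)|\les C|\xi-\bar\xi|$, the primed variable simply cancelling in the difference. Moreover, for $s\ges t$ the process $X^{t,x}_s$ depends only on the increments $\{W_r-W_t\}_{r\in[t,s]}$, which are independent of $\sF_t$; hence $\dbE_t[\cd]=\dbE[\cd]$ on every quantity below and it suffices to bound the corresponding unconditional expectations.

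For the second estimate I would apply the Burkholder--Davis--Gundy inequality to $X^{t,x}_s=x+\int_t^s\tilde b(r,X^{t,x}_r)dr+\int_t^s\tilde\sigma(r,X^{t,x}_r)dW_r$, obtaining
$$\dbE\Big[\sup_{u\in[t,s]}|X^{t,x}_u|^p\Big]\les C_p|x|^p+C_p\,\dbE\int_t^s|\tilde b(r,X^{t,x}_r)|^pdr+C_p\,\dbE\Big(\int_t^s|\tilde\sigma(r,X^{t,x}_r)|^2dr\Big)^{\frac p2},$$
and then using the linear growth of $(\tilde b,\tilde\sigma)$ and H\"older's inequality to dominate the right-hand side by $C_p(1+|x|^p)+C_p\int_t^s\dbE[\sup_{u\in[t,r]}|X^{t,x}_u|^p]dr$; Gronwall's inequality then yields the bound $C_p(1+|x|^p)$. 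The first estimate is obtained the same way: $\Delta_s\deq X^{t,x}_s-X^{t,x'}_s$ solves a linear SDE whose measure contribution is common to both endpoints and cancels, so BDG plus the Lipschitz property of $(\tilde b,\tilde\sigma)$ gives $\dbE[\sup_{u\in[t,s]}|\Delta_u|^p]\les|x-x'|^p+C_p\int_t^s\dbE[\sup_{u\in[t,r]}|\Delta_u|^p]dr$, whence $C_p|x-x'|^p$ by Gronwall.

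For the third estimate I would split $X^{t,x}_s-x=\int_t^s\tilde b(r,X^{t,x}_r)dr+\int_t^s\tilde\sigma(r,X^{t,x}_r)dW_r$ over $s\in[t,t+\d]$. Bounding the $p$-th moment of the drift part by H\"older's inequality produces a factor $\d^{p}$, and bounding the $p$-th moment of the stochastic part by BDG produces a factor $\d^{p/2}$; in both cases the integrand is controlled through the linear growth of $(\tilde b,\tilde\sigma)$ and the already-established second estimate, giving a constant multiple of $(1+|x|^p)$. Since $\d^{p}\les T^{p/2}\d^{p/2}$ for $\d\in[0,T]$, the two contributions combine into $C_p(1+|x|^p)\d^{p/2}$, as claimed.

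The arguments are entirely classical; the only point requiring care is the \textbf{first} step, namely checking that the mean-field coupling through the independent copy does not destroy the linear-growth and Lipschitz structure and, crucially, that the a~priori moment bound $\sup_s\dbE|X^{0,x_0}_s|^p<\infty$ for the self-consistent trajectory is available \emph{before} one freezes it. Once the effective coefficients $(\tilde b,\tilde\sigma)$ are seen to be deterministic, uniformly Lipschitz and of uniform linear growth, no genuine mean-field difficulty remains and the three estimates follow from the standard It\^o calculus toolbox.
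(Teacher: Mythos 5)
Your proof is correct. Note that the paper itself offers no proof of this lemma at all — it is stated with a citation to Buckdahn, Li, and Peng \cite{BLP} — and your argument is precisely the standard one underlying that citation: once the law of $X^{0,x_0}$ is frozen, the coefficients $\tilde b(s,\xi)=\dbE'[b(s,(X^{0,x_0}_s)',\xi)]$ and $\tilde\sigma(s,\xi)=\dbE'[\sigma(s,(X^{0,x_0}_s)',\xi)]$ are deterministic, uniformly Lipschitz and of linear growth (the constant involving only $\sup_s\dbE|X^{0,x_0}_s|<\infty$, which follows from the $S^2_\dbF$ well-posedness), the conditional expectation $\dbE_t$ reduces to $\dbE$ by independence of the post-$t$ Brownian increments from $\sF_t$, and the three estimates are the classical BDG--Gronwall bounds.
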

On the other hand, for the coefficients $g:[0,T]\times \dbR^n\times \dbR^n\times \dbR\times \dbR\times \dbR^d\rightarrow   \dbR$ and $\Phi:\dbR^n\times \dbR^n\rightarrow\dbR$ of the backward equation of \rf{FBSDE2}, we present the following assumption.
\bas{ass 5.3}\rm
There exist positive constants $C$, $C_0$ and $C_1$ such that for any $t\in[0,T], x',x,\bar x',\bar x\in \dbR^n$, $y',y,\bar y',\bar y\in \dbR$ and $\bar z,z\in \dbR^d$, the generator $g$ is differential with respect to $x$, $y$, $z$, and
\begin{align*}
&|g(t,x',x,y',y,z)|\leq C(1+|z|^2)\q\hb{and}\q|\Phi(x',x)|\leq C,\nn\\
&|g(t,x',x,y,y',z)-g(t,\bar{x}',\bar{x},\bar{y},\bar{y}',\bar{z})|\\
&\ \leq  C(|x'-\bar{x}'|+|x-\bar{x}|+|y'-\bar{y}'|+|y-\bar{y}|)
+C(1+|z|+|\bar z|)|z-\bar{z}|,\nn\\
%
%
%  &|g(t,x',x,y,y',z)-g(t,x',x,y,0,z)|\leq
% C(1+|y'|),\nn\\
%
%%
%   &|g(t, 0,0,0,0,z)-g(t,0,0,0,0,\bar z)|\leq C(1+|z|+|\bar z|)|z-\bar z|,\nn\\
%%
%   &|h(t,x',x,y',y',z)-h(t,\bar{x}',\bar{x},\bar{y}',\bar{y},\bar z)|\nn\\
%   &\q \leq C(|x'-\bar{x}'|+|x-\bar{x}|+|y'-\bar{y}'|+|y-\bar{y}|+|z-\bar{z}|),\nn\\
%%
%%
%   & |h(t,x',x,y,y',z)|\leq C(1+|y|+|y'|),\\
%
%
&g(t,x',x,y',y,z)\hb{ is continuous with respect to }t,\nn\\
&g(t,x',x,y',y,z)\hb{ is nondecreasing in }y'.\nn
\end{align*}
\eas

\begin{remark}\label{re 5.4}\rm
Under \autoref{ass 5.3}, it is easy to check that  for any $t\in[0,T], x',x\in \dbR^n$, $y',y\in \dbR$, $z',z\in \dbR^d$, and any $\e>0$, the following conditions, appeared in Kobylanski \cite{Kobylanski00}, hold:
\begin{align*}
	&|\frac{\partial g}{\partial x}(t,x',x,y',y,z)|\leq C(1+|z|)\leq C\(\frac{3}{2}+|z|^2\),\\
	&|\frac{\partial g}{\partial y}(t,x',x,y',y,z)|\leq C(1+|z|)\leq C+\frac{C^2}{4\varepsilon}+\varepsilon|z|^2,\\
	&|\frac{\partial g}{\partial z}(t,x',x,y',y,z)|\leq C(1+|z|).
\end{align*}
\end{remark}

Due to that \autoref{ass 5.3} is stronger then \autoref{ass 3.2}, then combining \autoref{th 3.5}, it is not hard to verify that
the backward equation of  \rf{FBSDE2} admits a unique global solution in $S_{\dbF}^\infty(t,T;\dbR)\times  \cZ^2_\dbF(t,T;\dbR^d)$. In other words, we have the following result.

\begin{proposition}\label{th 5.5}\it 
Let \autoref{ass 5.3} hold, then the backward equation of \rf{FBSDE2}
possesses a unique global solution, denoted by $\{(Y^{t,x}_s,Z^{t,x}_s);t\les s\les T\}$, in the space
$S_{\dbF}^\infty(t,T;\dbR)\times  \cZ^2_\dbF(t,T;\dbR^d)$.
\end{proposition}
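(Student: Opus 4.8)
The plan is to recognize the backward equation of \eqref{FBSDE2} as a mean-field BSDE of the form \eqref{MFBSDE} whose measure argument enters only through the law of $Y$ (and not through the law of $Z$), and then to invoke \autoref{th 3.5}. First I would dispose of the forward part, which decouples from the backward one: under \autoref{ass 5.2-1} the McKean--Vlasov SDE for $X^{0,x_0}$ has a unique solution, and, with $X^{0,x_0}$ frozen, the coefficients $\dbE'[b(s,(X^{0,x_0}_s)',\cd)]$ and $\dbE'[\sigma(s,(X^{0,x_0}_s)',\cd)]$ are Lipschitz, so the SDE for $X^{t,x}$ is uniquely solvable with the moment bounds recorded in \autoref{pro 5.2}. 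Henceforth $X^{0,x_0}$ and $X^{t,x}$ are treated as given data.

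Second, I would solve the self-consistent slice $(t,x)=(0,x_0)$, which is the only genuinely mean-field equation. Setting
$$
\tilde g(s,y,z,\mu)\deq \dbE'\big[g(s,(X^{0,x_0}_s)',X^{0,x_0}_s,\vartheta',y,z)\big],\q \vartheta'\sim\mu,
$$
together with $\xi\deq\dbE'[\Phi((X^{0,x_0}_T)',X^{0,x_0}_T)]$, this is a mean-field BSDE \eqref{MFBSDE} with generator independent of $\dbP_Z$. I would then verify that \autoref{ass 5.3} implies \autoref{ass 3.2} for the pair $(\tilde g,\xi)$: the bound $|g|\les C(1+|z|^2)$ gives the growth condition \eqref{22.5.18.1} with $\theta_s\equiv C$ and $\gamma=2C$ (no genuine growth in $y$ or $\mu$ is needed); the global Lipschitz estimate on $g$ yields \autoref{ass 3.2}(ii) with $\phi\equiv C$, the Lipschitz dependence of $g$ on the copied variable $y'$ being turned, by taking an optimal coupling, into $\cW_2$-Lipschitz dependence on $\mu$; and $|\Phi|\les C$, together with the boundedness of $\theta_s\equiv C$, gives \autoref{ass 3.2}(iii). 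By \autoref{th 3.5} we obtain a unique global solution $(Y^{0,x_0},Z^{0,x_0})\in S_\dbF^\infty(0,T;\dbR)\ts\cZ^2_\dbF(0,T;\dbR^d)$.

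Third, with $X^{0,x_0}$, $Y^{0,x_0}$ and hence the family $\{\dbP_{Y^{0,x_0}_s}\}$ now determined, I would treat a general pair $(t,x)$. The corresponding generator $\dbE'[g(s,(X^{0,x_0}_s)',X^{t,x}_s,(Y^{0,x_0}_s)',\cd,\cd)]$ no longer depends on the law of the unknown $Y^{t,x}$; integrating out the independent copy leaves a progressively measurable quadratic generator in $(y,z)$, paired with the bounded terminal value $\dbE'[\Phi((X^{0,x_0}_T)',X^{t,x}_T)]$. This is a classical quadratic BSDE with bounded data, so existence and uniqueness in $S_\dbF^\infty(t,T;\dbR)\ts\cZ^2_\dbF(t,T;\dbR^d)$ follow from the estimates of \autoref{le 7.1} (equivalently, from Kobylanski \cite{Kobylanski00} together with the attendant BMO bound for $Z^{t,x}$).

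The main obstacle lies in the second step: one must reconcile the joint-copy structure $\dbE'[g(s,(X^{0,x_0}_s)',\cd,(Y^{0,x_0}_s)',\cd,\cd)]$, which a priori depends on the joint law of $(X^{0,x_0}_s,Y^{0,x_0}_s)$, with the marginal framework of \eqref{MFBSDE}. Since $X^{0,x_0}$ is held fixed across all competing solutions, one verifies that the expectation functional is $\cW_2$-Lipschitz in the $Y$-marginal, using $\cW_2(\dbP_{Y_s},\dbP_{\bar Y_s})\les\|Y_s-\bar Y_s\|_{L^2(\Om)}$ in the contraction estimate, so that \autoref{ass 3.2}(ii) is genuinely met; the remaining checks are routine consequences of the growth and Lipschitz bounds in \autoref{ass 5.3}.
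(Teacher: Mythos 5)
Your proposal is correct and follows essentially the same route as the paper: first solve the genuinely mean-field slice $(t,x)=(0,x_0)$ via \autoref{th 3.5}, then freeze $(X^{0,x_0},Y^{0,x_0})$ so that the backward equation for a general $(t,x)$ becomes a standard quadratic BSDE with bounded terminal value. The only cosmetic difference is that for this second step the paper cites Hibon--Hu--Tang \cite[Lemma 2.1]{Hibon-Hu-Tang-17} (equivalently Hu--Tang \cite[Theorem 2.3]{Hu-Tang-16}) whereas you invoke \autoref{le 7.1} together with Kobylanski \cite{Kobylanski00}, which is immaterial; your closing remark on the joint-copy structure even makes explicit a point the paper passes over silently.
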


\begin{proof}
On the one hand, from \autoref{th 3.5}, we see that for
$(t,x)=(0,x_0)$, the backward equation of \rf{FBSDE2} possesses a global solution
$\{(Y^{0,x_0}_s,Z^{0,x_0}_s);0\les s\les T\}$ in the space
$S_{\dbF}^\infty(0,T;\dbR)\times  \cZ^2_\dbF(0,T;\dbR^d)$.
Moreover, $\|Y^{0,x_0}\|_{S_\dbF^\infty(0,T)}$ and $ \|Z^{0,x_0}\|^2_{ \cZ^2_\dbF(0,T)}$
are bounded.
On the other hand, once knowing $(Y^{0,x_0},Z^{0,x_0})$, we can define that for any
$s\in[t,T], y\in \dbR$ and $z\in \dbR^d$,
$$
g^\#(s,X_s^{t,x},y,z)=\dbE'[g(s,(X_s^{0,x_0})',X_s^{t,x},(Y_s^{0,x_0})',y,z)],\quad
\Phi^\#(x)=\dbE'[\Phi((X_T^{0,x_0})',x)].
$$
Then,
it is easy to check that $g^\#$  and $\Phi^\#$ also satisfy the condition  (6)  of Hibon, Hu, and Tang \cite{Hibon-Hu-Tang-17}.
Hence,  \cite[Lemma 2.1]{Hibon-Hu-Tang-17}  (see also  Hu and Tang \cite[Theorem 2.3]{Hu-Tang-16}) implies that BSDE \rf{BSDE} with generator $g^\#$ and terminal value $\Phi^\#$ has a unique
global  solution $(Y,Z)\in S_{\dbF}^\infty(t,T;\dbR)\times  \cZ^2_\dbF(t,T;\dbR^d)$.
In other words, the backward equation of \rf{FBSDE2} admits a unique global solution
$\{(Y^{t,x}_s,Z^{t,x}_s);t\les s\les T\}$ in the space
$S_{\dbF}^\infty(t,T;\dbR)\times  \cZ^2_\dbF(t,T;\dbR^d)$.
\end{proof}

We  define the value function as follows:
\begin{equation}\label{equ 5.2-1}
u(t,x)\deq Y^{t,x}_t,\q~ t\in[0,T],\ x\in \dbR^n.
\end{equation}
Note that $u(t,x)$ is continuous with respect to $(t,x)$. In fact, on the one hand,
\autoref{pro 5.2} deduces that the flow $(t,x)\mapsto X^{t,x}_s$ is continuous.
On the other hand, the stability result  (see Kobylanski \cite{Kobylanski00}) implies that $(t,x,s)\mapsto Y^{t,x}_s$ is continuous  under \autoref{ass 5.3}.
Thus, in particular, the deterministic function $u(t,x)=Y^{t,x}_t$ is certainly continuous with respect to $(t,x)$.

Now, we would like to connect the forward-backward system \rf{FBSDE2} with quadratic growth to the following nonlocal partial differential equation:
\begin{equation}\label{equ 5.3}
\left\{
\begin{aligned}
	&\frac{\partial v(t,x)}{\partial t}+\cL v(t,x)\\
	& +\dbE\Big[g\(t,X_t^{0,x_0},x,v(t,X_t^{0,x_0}),v(t,x),
	Dv(t,x)\dbE\big[\sigma(t,X_t^{0,x_0},x)\big]^\intercal\)\Big]=0,\\
	&\qq (t,x)\in[0,T)\times\dbR^n,\\
	&v(T,x)=\dbE\big[\Phi(X_T^{0,x_0},x)\big],\q~ x\in\dbR^n,
\end{aligned}
\right.
\end{equation}
where
\begin{align*}
\cL v(t,x)=&\ \frac{1}{2}\mathrm{tr}\Big(
\dbE\big[\sigma(t,X_t^{0,x_0},x)\big]
\dbE\big[\sigma(t,X_t^{0,x_0},x)\big]^\intercal
D^2v(t,x)\Big)\\
& + Dv(t,x)\dbE\big[b(t,X_t^{0,x_0},x)\big]^\intercal.
\end{align*}
It should be pointed out that in \rf{equ 5.3}, the terminology nonlocal means
$$
\begin{aligned}
&\dbE\Big[g\(t,X_t^{0,x_0},x,v(t,X_t^{0,x_0}),v(t,x),
Dv(t,x)\dbE\big[\sigma(t,X_t^{0,x_0},x)\big]^\intercal\)\Big]\\
& =\int_{\mathbb{R}^n}g\(t,x',x,v(t,x'),v(t,x), Dv(t,x)\dbE\big[\sigma(t,X_t^{0,x_0},x)\big]^\intercal \)\dbP_{X_t^{0,x_0}}(dx'),
\end{aligned}
$$
where $X^{0,x_0}$ is the solution of the forward equation of \rf{FBSDE2} with the initial pair $(0,x_0)$.

In the following,  we prove that the value function $u$ defined in \rf{equ 5.2-1} is the viscosity solution of PDE \rf{equ 5.3}. For this, we extend the approach of
Buckdahn, Li, and Peng \cite{BLP} developed in the framework of mean-field BSDEs with linear growth to that of quadratic growth.
First, we recall the definition of a viscosity solution of PDE \rf{equ 5.3}. For more details about the viscosity solutions,  we refer the reader to Crandall, Ishii, and Lions \cite{Grandall-Ishii-Lions-92}.

Note that for Euclidean space $\dbH$, denote by
$C_{p}(\dbH)$ the set of continuous function on $\dbH$, who grows at most like a polynomial function of the variable $x$ at infinity; and denote by
$C_{l, b}^{k}(\dbH)$ the set of functions of class $C^{k}$ on $\dbH$, whose partial derivatives of order less than or equal to $k$ are bounded.
\begin{definition}[Viscosity solution]\rm
A real-valued continuous function $v\in C_p([0,T]\ts\dbR^n)$ is called
\begin{enumerate}[~~\,\rm (i)]
	\item a viscosity subsolution to PDE (\ref{equ 5.3}), if
	$v(T,x)\leq \dbE\big[\Phi(X_T^{0,x_0},x)\big]$ for all $x\in\dbR^n$, in addition,
	if for arbitrary $\psi\in  C^3_{l,b}([0,T]\times \dbR^n)$ and $(t^*,x^*)\in[0,T)\times \dbR^n$
	such that $v-\psi$ attains its local maximum at $(t^*,x^*),$
	\begin{equation*}
		\begin{aligned}
			&\frac{\partial \psi(t^*,x^*)}{\partial t}+\cL  \psi(t^*,x^*)\\
			& +\dbE\Big[g\(t^*,X_{t^*}^{0,x_0},x^*,v(t^*,X_{t^*}^{0,x_0}),v(t^*,x^*),
			\dbE[\sigma(t^*,X_{t^*}^{0,x_0},x^*)]^\intercal D \psi(t^*,x^*)\)\Big]\geq0.
		\end{aligned}
	\end{equation*}
	\item a viscosity supersolution to PDE (\ref{equ 5.3}), if
	$v(T,x)\ges \dbE\big[\Phi(X_T^{0,x_0},x)\big]$ for all $x\in\dbR^n$, in addition,
	if for arbitrary $\psi\in C^3_{l,b}([0,T]\times \dbR^n)$ and $(t^*,x^*)\in[0,T)\times \dbR^n$
	such that $v-\psi$ attains its local minimum at $(t^*,x^*),$
	\begin{equation*}
		\begin{aligned}
			&\frac{\partial  \psi(t^*,x^*)}{\partial t}+\cL  \psi(t^*,x^*)\\
			& +\dbE\Big[g\(t^*,X_{t^*}^{0,x_0},x^*,v(t^*,X_{t^*}^{0,x_0}),v(t^*,x^*),
			\dbE[\sigma(t^*,X_{t^*}^{0,x_0},x^*)]^\intercal D \psi(t^*,x^*)\)\Big]\leq0.
		\end{aligned}
	\end{equation*}
	\item a viscosity  solution to PDE (\ref{equ 5.3}),
	if it is both a viscosity subsolution and a viscosity supersolution.
\end{enumerate}
\end{definition}

\begin{lemma} \label{le 5.5}\it 
Under \autoref{ass 5.2-1}  and \autoref{ass 5.3}, for any $t\in[0,T]$
and $\xi\in L_{\sF_t}^2(\Om;\dbR^n)$, we have
\begin{equation}\label{equ 5.4-1}
	u(t,\xi)=Y^{t,\xi}_t.
\end{equation}
\end{lemma}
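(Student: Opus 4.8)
The plan is to follow the classical two–step scheme for passing from deterministic to random initial data: first establish \rf{equ 5.4-1} for simple (step) random variables, then approximate a general $\xi$ and pass to the limit. The key preliminary observation is that once the frozen system $(X^{0,x_0},Y^{0,x_0})$ is fixed, the coefficients of \rf{FBSDE2} become the \emph{deterministic} functions $\tilde b(s,x)\deq\dbE'[b(s,(X^{0,x_0}_s)',x)]$, $\tilde\sigma(s,x)\deq\dbE'[\sigma(s,(X^{0,x_0}_s)',x)]$, together with $\tilde g(s,x,y,z)\deq\dbE'[g(s,(X^{0,x_0}_s)',x,(Y^{0,x_0}_s)',y,z)]$ and $\tilde\Phi(x)\deq\dbE'[\Phi((X^{0,x_0}_T)',x)]$, which inherit the Lipschitz and growth bounds of \autoref{ass 5.2-1} and \autoref{ass 5.3}. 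Thus \rf{FBSDE2} is effectively a \emph{decoupled} forward--backward system with deterministic Lipschitz forward coefficients, a generator of quadratic growth, and a bounded terminal value, so that by \autoref{pro 5.2} and \autoref{th 5.5} it possesses a unique adapted solution for every square-integrable $\sF_t$-measurable initial datum, deterministic or random.

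First I would treat $\xi=\sum_{i=1}^{m}x_i\chi_{A_i}$, where $\{A_i\}_{i=1}^m\subset\sF_t$ is a partition of $\Om$ and $x_i\in\dbR^n$. The point is a localization property: on $A_i$ the solution started from $\xi$ agrees with the one started from $x_i$. Since each $A_i$ is $\sF_t$-measurable and all stochastic integrals run over $[t,s]$ with $s\ges t$, the factor $\chi_{A_i}$ passes through both the Lebesgue and the It\^o integrals; hence multiplying the equations for $(X^{t,x_i},Y^{t,x_i},Z^{t,x_i})$ by $\chi_{A_i}$ and summing over $i$ produces a triple that solves \rf{FBSDE2} with initial condition $\xi$. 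By uniqueness, $(X^{t,\xi},Y^{t,\xi},Z^{t,\xi})=\sum_{i=1}^m\chi_{A_i}(X^{t,x_i},Y^{t,x_i},Z^{t,x_i})$, and evaluating the second component at time $t$ gives $Y^{t,\xi}_t=\sum_{i=1}^m\chi_{A_i}Y^{t,x_i}_t=\sum_{i=1}^m\chi_{A_i}u(t,x_i)=u(t,\xi)$, which is \rf{equ 5.4-1} for step data.

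Next I would pass to a general $\xi\in L^2_{\sF_t}(\Om;\dbR^n)$ by choosing step random variables $\xi_k\to\xi$ in $L^2_{\sF_t}(\Om;\dbR^n)$. On the one hand, the SDE stability for the deterministic Lipschitz coefficients $\tilde b,\tilde\sigma$ (the random-initial-data version of \autoref{pro 5.2}) yields $\dbE\big[\sup_{s\in[t,T]}|X^{t,\xi_k}_s-X^{t,\xi}_s|^2\big]\les C\,\dbE|\xi_k-\xi|^2\to0$, so that, using the boundedness and Lipschitz continuity of $\Phi$, the terminal values $\tilde\Phi(X^{t,\xi_k}_T)$ converge to $\tilde\Phi(X^{t,\xi}_T)$. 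Combined with the stability of one-dimensional quadratic BSDEs with bounded terminal data (Kobylanski \cite{Kobylanski00}), this gives $Y^{t,\xi_k}_t\to Y^{t,\xi}_t$. On the other hand, the continuity of $x\mapsto u(t,x)$ noted after \rf{equ 5.2-1}, together with the uniform bound on $u$ inherited from the $S^\infty_\dbF$-bound on $Y$, yields $u(t,\xi_k)\to u(t,\xi)$ (a.s. along a subsequence, and then in $L^2$ by dominated convergence). Passing to the limit in the identity $u(t,\xi_k)=Y^{t,\xi_k}_t$ established in the previous step gives \rf{equ 5.4-1}.

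The hard part will be the backward stability estimate in the third paragraph: in the quadratic-growth regime one cannot close the continuity of $\xi\mapsto Y^{t,\xi}_t$ by a plain Gronwall argument, because the generator is only locally Lipschitz with a quadratic term in $z$. Here I would exploit that $\Phi$ is bounded—so all the processes $Y^{t,\xi_k},Y^{t,\xi}$ live in $S^\infty_\dbF$ with a uniform bound—and that the associated martingales $Z\cd W$ are $BMO$ by \autoref{th 5.5}; this BMO control, inserted into Kobylanski's stability theorem, propagates the forward perturbation of the terminal value to $Y^{t,\xi_k}_t-Y^{t,\xi}_t$. The localization step for simple $\xi$ and the forward flow estimate are, by contrast, routine once uniqueness for the frozen decoupled system is in hand.
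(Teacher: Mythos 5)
Your proposal is correct and follows essentially the same route as the paper's own proof: first establish the identity for simple $\sF_t$-measurable random variables by multiplying the equations for $(X^{t,x_i},Y^{t,x_i},Z^{t,x_i})$ by the indicators $\chi_{A_i}$, summing, and invoking uniqueness of the (decoupled, frozen-coefficient) system, and then pass to general $\xi\in L^2_{\sF_t}(\Om;\dbR^n)$ by approximation, using the continuity of $u$ and the $L^2$-stability of $\xi\mapsto Y^{t,\xi}_t$. The only difference is cosmetic: you spell out the backward stability step (forward flow estimate, Kobylanski stability, BMO control) that the paper leaves implicit by citing Peng's Proposition 4.7 and the earlier continuity remarks.
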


\begin{proof}
The proof  is essentially an adaptation of Proposition 4.7 of Peng \cite{Peng-97}, so we  sketch it.
First, we assert that (\ref{equ 5.4-1}) holds true for a simple random variable $\xi\in L_{\sF_t}^2(\Om;\dbR^n)$ with a form below
\begin{equation}\label{22.5.12.1}
	\xi=\sum\limits_{j=1}^Mx_j\chi_{B_j},
\end{equation}
where $\{ B_j\}_{j=1}^M$ is a finite partition of  $(\Omega, \sF_t)$ and
$x_j\in \mathbb{R}^n$ with $j=1,2,\cdot\cdot\cdot, M.$
For each $x_j$, let $X^{t,x_j}$ be the solution of the following mean-field SDE
\begin{equation*}
	X^{t,x_j}_s=x_j+\int_t^s \dbE'\big[b(r,(X^{0,x_0}_r)',X^{t,x_j}_r)\big]dr
	+\int_t^s\dbE'\big[\sigma(r,(X^{0,x_0}_r)',X^{t,x_j}_r)\big]dW_r, \ t\les s\les T,
\end{equation*}
and  let $(Y^{t,x_j},Z^{t,x_j})$ be the solution of the mean-field BSDE
\begin{equation*}
	\begin{aligned}
		Y^{t,x_j}_s&=\dbE'\big[\Phi((X^{0,x_0}_T)',X^{t,x_j}_T)\big]
		+\int_t^s\dbE'\big[g(r,(X^{0,x_0}_r)',X^{t,x_j}_r,(Y^{0,x_0}_r)',Y^{t,x_j}_r,Z^{t,x_j}_r)\big]dr\\
		&\quad- \int_t^sZ^{t,x_j}_rdW_r,\q~ t\les s\les T.
	\end{aligned}
\end{equation*}
Multiplying $\chi_{A_j}$ on both sides of the above two equations and then summing up with respect to $j$,  note that $\sum_j\psi(x_j)\chi_{A_j}=\psi(\sum_jx_j\chi_{A_j})$, one gets
\begin{equation*}
	\begin{aligned}
		\sum\limits_{j=1}^M\chi_{A_j}X^{t,x_j}_s
		=&\ \sum\limits_{j=1}^M\chi_{A_j}x_j+\int_t^s \dbE'\[b(r,(X^{0,x_0}_r)',\sum\limits_{j=1}^M\chi_{A_j}X^{t,x_j}_r)\]dr\\
		&+\int_t^s\dbE'\[\sigma(r,(X^{0,x_0}_r)',
		\sum\limits_{j=1}^M\chi_{A_j}X^{t,x_j}_r)\]dW_r, \q~t\les s\les T,
	\end{aligned}
\end{equation*}
and for $t\les s\les T$,
\begin{equation*}
	\begin{aligned}
		\sum\limits_{j=1}^M\chi_{A_j}Y^{t,x_j}_s
		=&\ \dbE'\[\Phi\((X^{0,x_0}_T)',\sum\limits_{j=1}^M\chi_{A_j}X^{t,x_j}_T\)\]
		-\int_t^s\sum\limits_{j=1}^M\chi_{A_j}Z^{t,x_j}_rdW_r\\
		+\int_t^s&\dbE'\[g\(r,(X^{0,x_0}_r)',\sum\limits_{j=1}^M\chi_{A_j}X^{t,x_j}_r,(Y^{0,x_0}_r)',
		\sum\limits_{j=1}^M\chi_{A_j}Y^{t,x_j}_r,
		\sum\limits_{j=1}^M\chi_{A_j}Z^{t,x_j}_r\)\]dr.
	\end{aligned}
\end{equation*}
Then from the existence and uniqueness of the mean-field BSDE with quadratic growth (see \autoref{th 3.5}), we conclude that for $t\les s\les T$,
\begin{equation*}
	X^{t,\xi}_s=\sum\limits_{j=1}^M\chi_{A_j}X^{t,x_j}_s,\q\
	Y^{t,\xi}_s=\sum\limits_{j=1}^M\chi_{A_j}Y^{t,x_j}_s,\q\
	Z^{t,\xi}_s=\sum\limits_{j=1}^M\chi_{A_j}Z^{t,x_j}_s.
\end{equation*}
Finally, from the definition of $u(t,x)$, it yields
$$
Y^{t,\xi}_t=\sum\limits_{j=1}^M\chi_{A_j}Y^{t,x_j}_t=\sum\limits_{j=1}^M\chi_{A_j}u(t,x_j)
=u(t,\sum\limits_{j=1}^M x_j\chi_{A_j})=u(t,\xi).
$$
Now, for given $\xi\in L^2_{\sF_t}(\Omega;\mathbb{R}^n)$, there exists a sequence of simple variables $\{\xi_j\}$ admitting the form \rf{22.5.12.1} that converges to $\xi$ in $L^2_{\sF_t}(\Omega;\mathbb{R}^n)$.
Since the flow $(t,x)\mapsto X^{t,x}_s$  and $(t,x,s)\mapsto Y^{t,x}_s$ are continuous,
$u(t,x)$ is continuous with respect to $(t,x)$ and note the fact $Y^{t,\xi_j}_t=u(t,\xi_j)$, we have
\begin{equation*}
	\begin{aligned}
		&\mathbb{E}\big[|Y^{t,\xi}_t-u(t,\xi)|^2\big]
		= \mathbb{E}\big[|Y^{t,\xi}_t-Y^{t,\xi_j}_t
		+Y^{t,\xi_j}_t-u(t,\xi_j)+u(t,\xi_j)-u(t,\xi)|^2\big] \\
		&\leq 2\mathbb{E}\big[|Y^{t,\xi}_t-Y^{t,\xi_j}_t|^2\big]
		+\mathbb{E}\big[|u(t,\xi_j)-u(t,\xi)|^2\big] \rightarrow0\q
		\text{as}\q  j\rightarrow\infty.
	\end{aligned}
\end{equation*}
Hence, we have (\ref{equ 5.4-1}).
\end{proof}

Based on the above preparation, now we state the main result of this section.

\begin{theorem}[Feynman-Kac formula]\label{th 5.7}\it 
Under \autoref{ass 5.2-1}  and \autoref{ass 5.3}, the value function $u$ defined in (\ref{equ 5.2-1}) is the unique viscosity solution of PDE (\ref{equ 5.3}).
\end{theorem}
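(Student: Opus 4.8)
The plan is to prove existence and uniqueness separately, following the BSDE-to-PDE programme of Buckdahn, Li, and Peng \cite{BLP} but with the quadratic growth tamed by the techniques of Kobylanski \cite{Kobylanski00} together with the a priori $S^\infty$/BMO bounds furnished by \autoref{th 3.5} and \autoref{th 5.5}. The continuity of $u$ in $(t,x)$ has already been recorded, so $u\in C_p([0,T]\ts\dbR^n)$, and the terminal condition $u(T,x)=\dbE[\Phi(X^{0,x_0}_T,x)]$ is immediate from the definition \rf{equ 5.2-1}.

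For existence, the core is a dynamic programming principle. Using \autoref{le 5.5}, which identifies $u(t,\xi)=Y^{t,\xi}_t$ for random $\xi\in L^2_{\sF_t}(\Om;\dbR^n)$, together with the flow property $X^{t,x}_s=X^{t+\delta,X^{t,x}_{t+\delta}}_s$ and the uniqueness of \autoref{th 5.5}, I would obtain, for $0\les\delta\les T-t$, the backward-semigroup representation
$$
u(t,x)=Y^{t,x}_t,\qquad Y^{t,x}_s=u(t+\delta,X^{t,x}_{t+\delta})+\int_s^{t+\delta}\dbE'[g(\cdots)]\,dr-\int_s^{t+\delta}Z^{t,x}_r\,dW_r,
$$
so that the initial value of the BSDE on $[t,t+\delta]$ with terminal datum $u(t+\delta,X^{t,x}_{t+\delta})$ equals $u(t,x)$. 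To verify the subsolution inequality, fix $\psi\in C^3_{l,b}([0,T]\ts\dbR^n)$ with $u-\psi$ attaining a local maximum at $(t^*,x^*)$, normalised so that $u(t^*,x^*)=\psi(t^*,x^*)$; then $u(t^*+\delta,X^{t^*,x^*}_{t^*+\delta})\les\psi(t^*+\delta,X^{t^*,x^*}_{t^*+\delta})$ for small $\delta$. The comparison theorem for quadratic BSDEs, available under \autoref{ass 5.3} by \autoref{re 5.4} and Kobylanski \cite{Kobylanski00}, lets me replace the terminal datum by $\psi$. Applying It\^o's formula to $\psi(s,X^{t^*,x^*}_s)$ on $[t^*,t^*+\delta]$, comparing the drift with the BSDE generator, dividing by $\delta$ and letting $\delta\downarrow0$, yields the required viscosity inequality; here the $S^\infty$ bound on $Y$ and the $BMO$ bound on $Z$ from \autoref{th 5.5} ensure all quadratic terms pass to the limit. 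The supersolution property is symmetric, so $u$ is a viscosity solution.

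For uniqueness, I would establish a comparison principle showing that any viscosity subsolution $u_1$ and supersolution $u_2$ in $C_p([0,T]\ts\dbR^n)$ satisfy $u_1\les u_2$. The decisive device is Kobylanski's exponential change of variable, replacing $v$ by $w=\frac{1}{\lambda}(e^{\lambda v}-1)$ (or a two-sided variant), which converts the quadratic-in-$Dv$ nonlinearity into one of controlled growth; the a priori boundedness of $u$ inherited from \autoref{th 5.5} is what makes this transformation legitimate and keeps all constants uniform. After this reduction the doubling-of-variables method and the Crandall--Ishii lemma \cite{Grandall-Ishii-Lions-92} apply. The nonlocal coupling through $v(t,X^{0,x_0}_t)$ is controlled by the hypothesis that $g$ is nondecreasing in $y'$ (\autoref{ass 5.3}): this sign condition forces the nonlocal term into the comparison with the favourable sign, exactly as in \cite{BLP}, after which a standard penalization closes the argument.

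The main obstacle I expect is precisely this uniqueness step, where quadratic growth and nonlocality interact: the exponential transformation needed to tame the quadratic gradient term must remain compatible with the nonlocal expectation term, and one must check that the transformed generator still enjoys monotonicity in the nonlocal variable $y'$ so that the doubling argument can be closed. Verifying that the constants produced by the Crandall--Ishii lemma can be absorbed uniformly in the vanishing-viscosity/penalization parameters, while simultaneously handling the frozen law $\dbP_{X^{0,x_0}_t}$ against which the nonlocal term is integrated, is the technically delicate point that distinguishes the quadratic case from the linear framework of \cite{BLP}.
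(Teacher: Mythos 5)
Your existence argument is workable, but it takes a more laborious route than the paper and is not where the real issue lies. The paper freezes the nonlocal terms along the already-constructed solution $(X^{0,x_0},Y^{0,x_0})$, i.e.\ it defines $\tilde b,\tilde\sigma,\tilde g,\tilde\Phi$ as in \rf{equ 5.9}, checks via \autoref{re 5.4} that these satisfy Kobylanski's hypotheses, and then simply cites Kobylanski's Theorem 3.8 to conclude that $u$ is a viscosity solution of the resulting \emph{local} quadratic PDE, which coincides with \rf{equ 5.3} because of the identity $Y^{0,x_0}_t=u(t,X^{0,x_0}_t)$ from \autoref{le 5.5}. Your backward-semigroup/DPP argument with test functions, quadratic comparison, and the limit $\delta\downarrow0$ essentially re-proves that cited theorem by hand; this is legitimate, but all the delicate limit passages you list are precisely the content of Kobylanski's result, so nothing is gained.

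The genuine gap is in uniqueness. You propose an analytic comparison principle for the nonlocal PDE \rf{equ 5.3} via doubling of variables, the Crandall--Ishii lemma, and Kobylanski's exponential change of variables, and you yourself concede that you do not know how to make the exponential transformation, the nonlocal expectation term, and the monotonicity in $y'$ compatible. That unresolved interaction is not a technical footnote: no such comparison theorem for nonlocal PDEs with quadratic gradient growth is available in the paper or in the cited literature, so your uniqueness claim is simply not established. The paper avoids this difficulty entirely by arguing probabilistically: given \emph{any} viscosity solution $u^i$ of \rf{equ 5.3}, it freezes the nonlocal term using $u^i$ itself, forming $g^i(t,x,y,z)=\dbE\big[g\big(t,X^{0,x_0}_t,x,u^i(t,X^{0,x_0}_t),y,z\big)\big]$, so that $u^i$ is a viscosity solution of a local quadratic PDE with generator $g^i$; Kobylanski's \emph{uniqueness} theorem for that local PDE then forces the representation $u^i(t,x)=Y^{t,x,i}_t$ with $(Y^{t,x,i},Z^{t,x,i})$ solving the quadratic BSDE \rf{equ 5.14}; identifying $u^i(t,X^{0,x_0}_t)=Y^{0,x_0,i}_t$ shows that $(Y^{t,x,i},Z^{t,x,i})$ actually solves the mean-field backward equation in \rf{FBSDE2}, and the uniqueness of that mean-field BSDE (\autoref{th 5.5}, resting on \autoref{th 3.5}) gives $Y^{t,x,1}=Y^{t,x,2}$, hence $u^1=u^2$. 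In short, uniqueness of the PDE is inherited from uniqueness of the mean-field BSDE rather than from a viscosity comparison theorem; to complete your route you would first have to prove a new comparison result for nonlocal quadratic PDEs, which is exactly the open difficulty the paper's method is designed to circumvent.
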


\begin{proof}
From  \autoref{le 5.5}  and the uniqueness of the mean-field forward-backward SDE (\ref{FBSDE2}) with the initial pair $(t,x)=(0,x_0)$, we have that
\begin{equation}\label{equ 5.4-8}
	Y^{0,x_0}_t=Y^{t,X^{0,x_0}_t}_t=u(t,X^{0,x_0}_t),\q~ t\in[0,T].
\end{equation}
Based on the value of $X^{0,x_0}$ and $Y^{0,x_0}$, one can define that
\begin{equation}\label{equ 5.9}
	\begin{aligned}
		\tilde{b}(t,x)&=\dbE\big[b(t,X_t^{0,x_0},x)\big],\quad \tilde{\sigma}(t,x)=\dbE\big[\sigma(t,X_t^{0,x_0},x)\big],\\
		\tilde{g}(t,x,y,z)&=\dbE\big[g(t,X_t^{0,x_0},x, Y_t^{0,x_0},y,z)\big],\quad \tilde{\Phi}(x)=\dbE\big[\Phi(X_T^{0,x_0},x)\big].
	\end{aligned}
\end{equation}
By \autoref{re 5.4}, it is easy to check that the parameters  $(\tilde{b}, \tilde{\sigma},\tilde{g},\tilde{\Phi})$ satisfy the assumptions $\mathrm{(H4)}$ and $\mathrm{(H5)}$ of Kobylanski \cite{Kobylanski00}.
In view of  Theorem 3.8 of \cite{Kobylanski00},  the function $u$ is a viscosity solution to the following PDE
\begin{equation*}\label{equ 5.10}
	\left\{
	\begin{aligned}
		&\frac{\partial u(t,x)}{\partial t}+
		\frac{1}{2}\mathrm{tr}\big(
		\tilde{\sigma}(t,x)
		\tilde{\sigma}(t,x)^\intercal
		D^2u(t,x)\big)
		+\tilde{b}(t,x)^\intercal Du(t,x)\\
		&\quad
		+\tilde{g}\big(t,x,u(t,x), \tilde{\sigma}(t,x)^\intercal Du(t,x)\big)=0,\q (t,x)\in[0,T)\times\dbR^n;\\
		&u(T,x)=\tilde{\Phi}(x),\q x\in\dbR^n.
	\end{aligned}
	\right.
\end{equation*}
Finally, from the definitions of the parameters $(\tilde{b}, \tilde{\sigma},\tilde{g},\tilde{\Phi})$ and (\ref{equ 5.4-8}), we see that $u$ is also a viscosity solution to PDE (\ref{equ 5.3}).

Next, we prove the uniqueness of PDE (\ref{equ 5.3}).
Let  both  $u^1$ and $u^2$ be the viscosity solutions of PDE (\ref{equ 5.3}).
For any $t\in[0,T]$, $x\in\dbR^n$, $y\in\dbR$ and $z\in\dbR^d$, we set
\begin{align*}
	g^1(t,x,y,z)=\dbE\[g\(t,X_t^{0,x_0},x,u^1(t,X_t^{0,x_0}),y,z\)\],\\
	g^2(t,x,y,z)=\dbE\[g\(t,X_t^{0,x_0},x,u^2(t,X_t^{0,x_0}),y,z\)\].
\end{align*}
Then, for $i=1,2$, $u^i$ is a viscosity solution to the following PDE
\begin{equation*}\label{equ 5.12}
	\left\{
	\begin{aligned}
		&\frac{\partial u^i(t,x)}{\partial t}+
		\frac{1}{2}\mathrm{tr}\big(
		\tilde{\sigma}(t,x)
		\tilde{\sigma}(t,x)^\intercal
		D^2u^i(t,x)\big)
		+\tilde{b}(t,x)^\intercal Du^i(t,x)\\
		&\quad
		+g^i\big(t,x,u^i(t,x), \tilde{\sigma}(t,x)^\intercal Du^i(t,x)\big)=0,\q (t,x)\in[0,T)\times\dbR^n;\\
		&u(T,x)=\tilde{\Phi}(x),\q x\in\dbR^n,
	\end{aligned}
	\right.
\end{equation*}
where $\tilde{b}, \tilde{\sigma}, \tilde{\Phi}$ is given in (\ref{equ 5.9}).
Again, thanks to Kobylanski \cite{Kobylanski00}, $u^i$ admits the following
probabilistic interpretation:
\begin{equation}\label{equ 5.13}
	u^i(t,x)=Y^{t,x,i}_t, \q~ (t,x)\in[0,T]\times \dbR^n,
\end{equation}
where $(Y^{t,x,i},Z^{t,x,i})$ is the solution to the following quadratic BSDE
\begin{equation}\label{equ 5.14}
	\left\{
	\begin{aligned}
		-dY^{t,x,i}_s&=g^i(s,X^{t,x}_s,Y^{t,x,i}_s,Z^{t,x,i}_s)ds-Z^{t,x,i}_sdW_s,\q\ s\in[t,T];\\
		Y_T^{t,x,i}&=\tilde{\Phi}(x),\q\ x\in\dbR^n.
	\end{aligned}
	\right.
\end{equation}
Now, we let $(t,x)=(0,x_0)$ in (\ref{equ 5.13}).
Similar to \autoref{le 5.5}, from the continuity of $u^i$ and the uniqueness of the solution to BSDE (\ref{equ 5.14}), one has
\begin{align*}
	&u^i(t,X_t^{0,x_0})=Y^{0,x_0,i}_t,\q\  t\in[0,T].
\end{align*}
Finally, from the definitions of $(g^i,\tilde{b}, \tilde{\sigma}, \tilde{\Phi})$,
we have that the pair $(Y^{t,x,i},Z^{t,x,i})$ is the adapted solution to the following  quadratic BSDE
\begin{equation*}\label{equ 5.15}
	\left\{
	\begin{aligned}
		-dY^{t,x,i}_s&=\dbE'\big[g^i(s, (X^{0,x_0}_s)', X^{t,x}_s, (Y^{0,x_0,i}_s)',Y^{t,x,i}_s,Z^{t,x,i}_s)\big]ds-Z^{t,x,i}_sdW_s,\q s\in[t,T];\\
		Y_T^{t,x,i}&=\dbE'\big[\Phi((X_T^{0,x_0})',X_T^{t,x})\big],\q x\in\dbR^n.
	\end{aligned}
	\right.
\end{equation*}
Then \autoref{th 5.5} implies that
$$Y_s^{t,x,1}=Y_s^{t,x,2},\q\ t\les s\les T.$$
In particular, as $s=t$, one gets that
$$u^1(t,x)=Y_t^{t,x,1}=Y_t^{t,x,2}=u^2(t,x), \q \forall (t,x)\in[0,T]\ts\dbR^n.$$
This completes the proof.
\end{proof}

\begin{remark}\rm
\autoref{th 5.7} establishes the relation between the solution of mean-field BSDEs and the viscosity solution of the nonlocal PDEs with quadratic growth, which extends the related result of Kobylanski \cite{Kobylanski00} to the mean-field framework and extends the nonlinear Feynman-Kac formula of Buckdahn, Li, and Peng \cite{BLP} with linear growth to that of quadratic growth.
\end{remark}

\section{Concluding Remark}\label{Sec6}

We initiate the study of general mean-field BSDEs \rf{MFBSDE} and  give the existence, uniqueness, and comparison results for one-dimensional mean-field BSDEs \rf{MFBSDE} with quadratic growth and with bounded terminal value by introducing some new ideas.
Besides, we obtain the convergence of particle systems for the mean-field BSDE \rf{MFBSDE} with quadratic growth and give the rate of  convergence when generator $g$ is independent of the law of $Z$. However, the convergence rate is still an open problem if the generator $g$ depends on the law of $Z$.
Finally, in this framework, when the generator $g$ depends on the expectation of the state process $(Y,Z)$, we used the mean-field BSDEs \rf{MFBSDE} to prove the existence and uniqueness of the viscosity solutions of the nonlocal PDEs \rf{equ 5.3}, which extend the nonlinear Feynman-Kac formula of Buckdahn, Li, and Peng \cite{BLP} to that of quadratic growth.
On the other hand, when the generator $g$ depends on the  distribution of $Z$,  the existence and uniqueness of viscosity solutions of related PDEs are intriguing and challenging, and remains to be studied in the future.
%
%%\section*{Acknowledgements}
%
%The authors would like to thank Professor Shige Peng and Dr. Kihun Nam for their helpful discussions and comments.
%\tb{The authors would like to thank also the anonymous referee for his/her stimulating comments.}

%

%% or include bibliography directly:

\end{document}